\documentclass[12pt]{elsart}

\usepackage{amssymb}

\usepackage{color}
\usepackage{graphicx}\usepackage{subfigure}
\usepackage{amsmath}
\usepackage{extarrows}
\usepackage{amsfonts}
\usepackage{listings}
\usepackage{grffile}
\usepackage{float}
\usepackage{caption}
\usepackage[title]{appendix}
\allowdisplaybreaks
\usepackage{psfrag}


\usepackage{paralist}
\usepackage[colorlinks=true]{hyperref}


\setcounter{MaxMatrixCols}{10}

\listfiles
%
\usepackage{multirow,booktabs}

\DeclareMathOperator\sign{sgn}
\newtheorem{theorem}{Theorem}[section]
\textheight=23cm\textwidth=16cm
\numberwithin{equation}{section}
\numberwithin{figure}{section}
\numberwithin{table}{section}

\renewcommand{\vec}[1]{\mbox{\boldmath \small $#1$}}
\renewcommand{\bar}[1]{\mkern 1.5mu\overline{\mkern-1.5mu#1\mkern-1.5mu}\mkern 1.5mu}

\renewcommand{\qed}{\hfill \nobreak \ifvmode \relax \else
      \ifdim\lastskip<1.5em \hskip-\lastskip
      \hskip1.5em plus0em minus0.5em \fi \nobreak
      \vrule height0.75em width0.5em depth0.25em\fi}

\newtheorem{example}{Example}[section]

\newtheorem{remark}{Remark}[section]
\numberwithin{equation}{section}
\numberwithin{figure}{section}
\numberwithin{table}{section}

\newenvironment{proof}[1][Proof]{\begin{trivlist}
\item[\hskip \labelsep {\bfseries #1}]}{\end{trivlist}}
\renewcommand{\qed}{\hfill \nobreak \ifvmode \relax \else
      \ifdim\lastskip<1.5em \hskip-\lastskip
      \hskip1.5em plus0em minus0.5em \fi \nobreak
      \vrule height0.75em width0.5em depth0.25em\fi}
\begin{document}

\begin{frontmatter}
 \title{Second-order accurate BGK schemes for the special
 relativistic hydrodynamics with the Synge equation of state}

 \author{Yaping Chen}
 \ead{ypchen@nwpu.edu.cn}
  \address{
  NPU-UoG International Cooperative Lab for Computation \& Application in Cardiology, Northwestern Polytechnical University, Xi'an 710129, Shaanxi Province, P.R. China}
 \author{Yangyu Kuang},
 \ead{986254703@qq.com}
 \author[label2]{Huazhong Tang}
 \thanks[label2]{Corresponding author} 
 \ead{hztang@pku.edu.cn}
 \address{Center for Applied Physics and Technology, HEDPS and LMAM,
    School of Mathematical Sciences, Peking University, Beijing 100871,
    P.R.China}
 \date{\today{}}

 \maketitle

 \begin{abstract}
This paper extends the second-order accurate BGK finite
volume schemes for the ultra-relativistic flow simulations \cite{CHEN2017} to the 1D and 2D special relativistic hydrodynamics  with the Synge
 equation of state.
%
It is shown that such 2D schemes
 are very time-consuming  due to the moment integrals (triple integrals) so that they are no longer  practical.
 In view of this,  the
simplified BGK (sBGK) schemes are presented 
 by removing some terms in the approximate nonequilibrium distribution at the
 cell interface for the BGK scheme without loss of accuracy.
 They are practical because the moment integrals of the approximate distribution can be reduced to the single integrals by  some  coordinate transformations.
The 
relations between the left and right states of the
shock wave, rarefaction wave, and contact discontinuity are also discussed, so that the exact solution of the 1D Riemann problem could be derived and used for the numerical comparisons.
Several numerical experiments are conducted to demonstrate that the proposed {gas-kinetic  schemes} are accurate and stable.
A comparison of the sBGK schemes with the BGK scheme in one dimension shows that the former performs almost the same as the latter in terms of the accuracy and resolution, but is much more efficiency. 
 \end{abstract}

 \begin{keyword}
  {Gas-kinetic} scheme, Anderson-Witting model, special relativistic Euler equations,  relativistic perfect gas, equation of state.
 \end{keyword}
\end{frontmatter}


\section{Introduction}
\label{sec:intro}
 In many flow problems of astrophysical interest, the fluid moves at extremely high velocities near the speed of light, so that the relativistic effects become important. Relativistic hydrodynamics (RHD) plays a major role in astrophysics, plasma physics and nuclear physics etc.,
but
the dynamics of the relativistic system requires solving highly nonlinear governing equations, rendering the analytic treatment of practical problems extremely difficult. The numerical simulation is the primary and powerful way to study and understand the RHDs.

The pioneering numerical work in the field of numerical RHDs may date back to
the finite difference code via artificial viscosity  for the spherically symmetric general RHD equations in the Lagrangian coordinate \cite{May1966,May1967} and  for multi-dimensional RHD equations in the Eulerian coordinate \cite{R.Wilson1972}.
Since 1990s, 
various modern shock-capturing methods with an exact or approximate Riemann solver have been developed for the RHD equations. Some examples are the local characteristic approach \cite{Mart1991Numerical}, the two-shock approximation solvers \cite{balsara1994riemann,Dai1997}, the Roe solver \cite{F.Eulderink1995}, the flux corrected transport method \cite{duncan1994}, the flux-splitting method based on the spectral decomposition \cite{Donat1998}, the piecewise parabolic method \cite{Marti1996,Mignone2005}, the HLL (Harten-Lax-van Leer) method \cite{schneider1993new}, the HLLC (HLL-Contact) method \cite{mignone2005hllc} and the Steger-Warming flux vector splitting method \cite{Zhao2014Steger}.
The analytical solution of the  Riemann problem in relativistic hydrodynamics was studied in \cite{mart1994}.
Some other higher-order accurate methods have also been well studied in the literature, e.g. the ENO (essentially non-oscillatory) and weighted ENO (WENO) methods \cite{dolezal1995relativistic,del2002efficient,Tchekhovskoy2007},
the discontinuous Galerkin (DG) method \cite{RezzollaDG2011}, the adaptive moving mesh methods \cite{he2012adaptive1,he2012adaptive2},
the Runge-Kutta DG {methods} with WENO limiter \cite{zhao2013runge,ZhaoTang-CiCP2017,ZhaoTang-JCP2017},
the direct Eulerian GRP {schemes} \cite{yang2011direct,yang2012direct,wu2014third},
the local evolution Galerkin method \cite{wu2014finite},
and the two-stage fourth-order accurate time
  discretizations \cite{Yuan2020JCM}.
Recently some physical-constraints-preserving (PCP) schemes were developed for the special RHDs and relativistic magnetohydrodynamics (RMHD). They are the high-order accurate PCP finite difference WENO schemes and discontinuous Galerkin (DG) methods proposed
in \cite{wu2015high,wu2016physical,qin2016bound,Ling2019JCP,wu2017m3as,wu2018zamp}.
The entropy-stable schemes were also developed for the special RHD or RMHD
equations \cite{DUAN-JCP2020A,DUAN-AAMM2020,DUAN-JCP2020B}.
The readers are also referred to the early review articles \cite{marti2003review,marti2015review,Font2008} as well as references therein.


It is noted that most of those methods are based on macroscopic continuum description and the most commonly used EOS is designed for the gas with constant ratio of specific heats. However, such EOS is a poor approximation for most relativistic astrophysical flows and is essentially valid only for the  either sub-relativistic or ultra-relativistic gases. Later, several more accurate EOSs were proposed in the literature for the numerical RHDs, see e.g. \cite{Falle1996,Sokolov2001,Mignone2005,Mathews1971,Ryu2006}.
The existing results suggest that employing a correct EOS is important for getting quantitatively correct results in problems involving a transition from the non-relativistic temperature to the relativistic temperature or vice versa. For the single-component perfect gas in the relativistic regime, the ``exact'' EOS is derived by Synge \cite{synge1957} from the relativistic kinetic theory, which goes back to 1911 when an equilibrium distribution function was derived for a relativistic gas \cite{Juttner1911}. Therefore, it seems convenient and meaningful to construct a gas-kinetic scheme (GKS) for the RHDs with such ``exact'' EOS.
The GKS presents a gas evolution process from a kinetic scale to a
hydrodynamic scale, where the fluxes are recovered from the moments
of a single time-dependent gas distribution function.
The development of GKS, such as the kinetic flux
vector splitting  (KFVS) and Bhatnagar-Gross-Krook (BGK) schemes,
has attracted much attention and significant progress has been
made in the non-relativistic hydrodynamics \cite{Xu2001,LiuTang2014,XuSimplification,May2007ImprovedGKS}.
They utilize the well-known connection that the macroscopic governing equations are the moments of the Boltzmann equation whenever the distribution function is at equilibrium.
The  kinetic  beam  scheme was first proposed for the relativistic gas dynamics in \cite{yang1997kinetic}. After that,
the kinetic schemes including the KFVS and BGK-type scheme for  the ultra-relativistic Euler equations  were developed in  \cite{Kunik2003ultra,kunik2003second,kunik2004bgktype}.
 For special relativistic Euler equations, the kinetic schemes were developed in \cite{Kunik2004,QamarRMHD2005,SRHDKFVS2005}. The BGK method \cite{Xu2001} seems to give good simulations for classical gas dynamics and has been  successfully extended to the ultra-relativistic RHDs \cite{CHEN2017}. Extension of the above method to the special RHDs seems to be feasible.
 However, unlike the ultra-relativistic case, the difficulty and complexity involved in the GKS for the special RHDs are obviously increased due to the Lorentz factor  and the Maxwell-J\"uttner distribution which make the RHD equations highly nonlinear.

 This paper will extend  the BGK scheme of the ultra-RHDs \cite{CHEN2017} to the special RHDs with the Synge EOS. Unfortunately, such scheme seems no longer practical because the triple moment integrals  should be numerically calculated  for numerical fluxes at each time step  with very high computational cost.
 Therefore, the simplification of the BGK scheme is necessary to improve the efficiency of the BGK schemes  while preserving the accuracy.
%
The paper is organized as follows.
 Section \ref{sec:basic} introduces the special relativistic Boltzmann
 equation from the kinetic theory.
 Section \ref{sec:GovernEqns} presents the special-relativistic Euler equations and proves the boundness of the speed of sound.
  Section \ref{sec:scheme}  develops the second-order accurate {gas-kinetic schemes} and their simplified version for the special-relativistic Euler equations.
  Section \ref{sec:moments}
 presents the moment integrals of the approximate distribution appearing in the above gas kinetic schemes.
 Section \ref{sec:test} gives several numerical experiments to demonstrate accuracy, robustness and effectiveness of
 the proposed schemes in simulating special-relativistic fluid flows.
 Section \ref{sec:conclusion} concludes the paper.

\section{Preliminaries and notations}
\label{sec:basic}
A microscopic gas particle in the special relativistic kinetic theory of gases \cite{rbebook} is characterized by the four-dimensional
space-time coordinates $(x^{\alpha}) = (x^0,\vec{x})$ and momentum four-vector $(p^{\alpha}) = (p^0,\vec{p})$, where $x^0 = ct$, $c$,  $t$ and $\vec{x}$ are the speed of light in vacuum, the time and three-dimensional (3D) spatial coordinates, respectively, and the Greek index
$\alpha$  runs from $0, 1, 2, 3$.
Besides the contravariant notation (e.g. $p^{\alpha}$), the covariant notation such as $p_{\alpha}$ will also be used in the following,
while both notations $p^\alpha$ and $p_{\alpha}$ are related by
$p_\alpha = g_{\alpha\beta}p^{\beta}$ and $p^{\alpha} = g^{\alpha\beta}p_{\beta}$,
with 
 the Minkowski space-time metric tensor chosen as
$(g^{\alpha\beta}) = \text{diag}\{1, -1, -1, -1\}$ and its
 inverse $(g_{\alpha\beta})$.
To be more specific, the contravariant components of the momentum four-vector $p^{\alpha}$ are defined by
$ (p^{\alpha}) = m\gamma(\vec{v})(c,\vec{v})$ with
$\gamma(\vec{v}) = (1-c^{-2}|\vec{v}|^2)^{-\frac{1}{2}}$,
where $\vec{v}$ is the particle velocity   and $m$ is the mass of each structure-less particle which is assumed to be the same for all particles. The expression of $p^{\alpha}$ shows that the scalar product of the momentum four-vector with itself is
 $p^{\alpha}p_{\alpha} = m^2c^2$.
The one-particle distribution function $f(x^{\alpha},p^{\alpha})$, defined in terms of the space-time and momentum coordinates, can be taken equal to $f(\vec{x}, t, \vec{p})$ since $p^0 = \sqrt{|\vec{p}|^2+m^2c^2}$. The distribution function is defined as a
scalar invariant such that $f(\vec{x}, t, \vec{p})d^3 \vec{x}d^3 \vec{p}$ gives at time $t$ the number of particles in the volume element $d^3 \vec{x}$ about $\vec{x}$ with momenta in a range $d^3 \vec{p}$ about $\vec{p}$.
The special relativistic Boltzmann equation describes the time evolution of one-particle distribution function and reads
\begin{equation*}
 p^{\alpha}\frac{\partial f}{\partial x^{\alpha}} = Q(f,f),
\end{equation*}
where the collision term $Q(f,f)$   depends on the product of distribution functions of two particles at collision. There exist some simpler collision
models in the literature. The Anderson-Witting (AW) model \cite{AW}
\begin{equation}\label{AWM}
 p^{\alpha}\frac{\partial f}{\partial x^{\alpha}} = -\frac{U_\alpha p^{\alpha}}{\tau c^2}(f-g),
\end{equation}
will be considered in this paper, where $\tau$ is the relaxation time and
 the hydrodynamic four-velocities  $U_{\alpha}$ are defined according to {the} Landau-Lifshitz decomposition, by
\begin{equation}\label{EQ:Landau-Lifshitz frame}
 U_{\beta}T^{\alpha\beta} = \varepsilon g^{\alpha\beta}U_{\alpha},
\end{equation}
which implies that $(\varepsilon, U_{\alpha})$ is a generalized eigenpair of $(T^{\alpha\beta},g^{\alpha\beta})$, here $\varepsilon$ and $ T^{\alpha\beta}$ are
the energy density and  energy-momentum tensor, respectively.
The local-equilibrium distribution $g=g(\vec{x},t,\vec{p})$ in  \eqref{AWM}  is given by
\begin{equation}\label{juttner}
 g=\frac{\rho \zeta}{4\pi m^4c^3K_2(\zeta)}\exp\left(-m^{-1}c^{-2}\zeta U_{\alpha}p^{\alpha}\right),
\end{equation}
which is the so-called Maxwell-J$\ddot{\text{u}}$ttner equilibrium (or relativistic Maxwellian) distribution
and obeys the common prescription
that the mass density $\rho$ and the energy density $\varepsilon$ are completely determined by  $g$ alone, where
$\zeta = \frac{mc^2}{kT}$ is the ratio between the rest energy of a particle $mc^2$ and $kT$, $T$ denotes thermodynamic temperature, $k$ is the Boltzmann's constant, and $K_{{\nu}}(\zeta)$  is the modified Bessel function of the second kind defined by
\[
 K_{{\nu}}(\zeta):=\int_{0}^{\infty}\cosh({\nu}\vartheta)\exp(-\zeta\cosh\vartheta)d\vartheta, \ \ {\nu}\geq 0,
\]
satisfying the recurrence relation
\[
  K_{\nu+1}(\zeta) = K_{\nu-1}(\zeta) + 2\nu\zeta^{-1}K_{\nu}(\zeta).
\]
The particles behave as non-relativistic (resp. ultra-relativistic) for $\zeta\gg1$ (resp. $\zeta \ll1$).
For the collision invariants 1 and $p^{\alpha}$, the collision term in  \eqref{AWM}
satisfies the   identities
\begin{equation}\label{convc}
 \int_{\mathbb{R}^3}\frac{U_\alpha p^{\alpha}}{\tau c^2}(f-g)\vec{\Psi} \frac{d^3\vec{p}}{p^0} = 0,
 \ \
 \vec{\Psi} = (1, p^{i}, p^0)^T,
\end{equation}
which imply the following conservation laws
\begin{equation}\label{conv}
 \partial_{\alpha}N^{\alpha} = 0,\quad \partial_{\beta}T^{\alpha\beta} = 0,
\end{equation}
where
the particle four-flow $N^{\alpha}$ and the energy-momentum tensor $ T^{\alpha\beta}$ can be expressed as
\begin{align}
 N^{\alpha} = c\int_{\mathbb{R}^3}p^{\alpha}f\frac{d^3\vec{p}}{p^0},
 \ \
 \label{intTT}
 T^{\alpha\beta}=c\int_{\mathbb{R}^3}p^{\alpha}p^{\beta}f\frac{d^3\vec{p}}{p^0}.
\end{align}
In the Landau and Lifshitz decomposition they can be decomposed with respect to the four-velocity $U^{\alpha}$ by
\begin{align*}
  N^{\alpha} &= m^{-1}\rho U^{\alpha} + n^{\alpha},\ \
  T^{\alpha\beta} = c^{-2}\varepsilon U^{\alpha}U^{\beta} - \Delta^{\alpha\beta}(p+\varPi) + \pi^{\alpha\beta},
\end{align*}
where 
\begin{equation*}
  \Delta^{\alpha\beta} = g^{\alpha\beta}-c^{-2}U^{\alpha}U^{\beta},
\end{equation*}
which is {a} symmetric  projector onto the 3D subspace  orthogonal to $U_{\alpha}$,
i.e.   $\Delta^{\alpha\beta}U_{\beta} = 0$.
With the help of $N^{\alpha}$ and $T^{\alpha\beta}$,  one can calculate the mass density $\rho$,   the particle-diffusion current $n^\alpha$,
the energy density $\varepsilon$, and the shear-stress tensor {$\pi^{\alpha\beta}$}
of the gas
 by
  \begin{align*}
       \rho=& mc^{-2} U_{\alpha}N^{\alpha} = mc^{-1} \int_{\mathbb{R}^3}Efd\varXi,
       \ \
    n^{\alpha} = \Delta^{\alpha}_{\beta}N^{\beta} = c\int_{\mathbb{R}^3}p^{<\alpha>}fd\varXi,
\\
    \varepsilon =& c^{-2}U_{\alpha}U_{\beta}T^{\alpha\beta} = c^{-1}\int_{\mathbb{R}^3}E^2fd\varXi,
  \ \ 
    \pi^{\alpha\beta} = \Delta^{\alpha\beta}_{\mu\nu}T^{\mu\nu} = c\int_{\mathbb{R}^3}p^{<\alpha\beta>}fd\varXi,
    \end{align*}
and the sum of thermodynamic pressure $p$ and bulk viscous pressure $\varPi$ by
  \begin{equation*}
    p + \varPi = -\frac{1}{3}\Delta_{\alpha\beta}T^{\alpha\beta} = \frac{1}{3c}\int_{\mathbb{R}^3}(E^2-m^2c^4)fd\varXi,
  \end{equation*}
where $d\varXi := \frac{d^3\vec{p}}{p^0}$ is the volume element which is invariant with respect to Lorentz transformations,
$E=U_{\alpha}p^{\alpha}$, $p^{<\alpha>}=\Delta^{{\alpha}}_{\gamma}p^{\gamma}$, $p^{<\alpha\beta>}=\Delta^{\alpha\beta}_{\gamma\delta}p^{\gamma}p^{{\delta}}$, and
  \begin{equation*}
      \Delta^{\alpha\beta}_{\mu\nu}
      =\frac{1}{2}(\Delta^{\alpha}_{\mu}\Delta^{\beta}_{\nu} + \Delta^{\beta}_{\mu}\Delta^{\alpha}_{\nu})
      -\frac{1}{3}\Delta_{\mu\nu}\Delta^{\alpha\beta}.
  \end{equation*}

\begin{remark}
The mass density $\rho$ and energy density $\varepsilon$ are completely determined by the local-equilibrium distribution $g$ alone, i.e.
\begin{equation}\label{rhoepsilon}
\begin{array}{l}{\rho=m c^{-1} \int_{\mathbb{R}^{3}} E g d \Xi}, \\
{\varepsilon=c^{-1} \int_{\mathbb{R}^{3}} E^{2} g d \Xi=\rho c^{2}\left(G(\zeta)-\zeta^{-1}\right)},\ \  G(\zeta)=\frac{K_3(\zeta)}{K_2(\zeta)}. \end{array}
\end{equation}
\end{remark}

\begin{remark}
The quantities $n^{\alpha}, \varPi$, and $\pi^{\alpha\beta}$ become zero at the local thermodynamic equilibrium while $f=g$, i.e.
\begin{equation}\label{npip}
\begin{array} {l}
n^{\alpha} =c \int_{\mathbb{R}^{3}} p^{<\alpha>} g d \Xi = 0, \\
\pi^{\alpha \beta} =c \int_{\mathbb{R}^{3}} p^{<\alpha \beta>} g d \Xi = 0,\\
p=\frac{1}{3 c} \int_{\mathbb{R}^{3}}\left(E^{2}-m^{2} c^{4}\right) g d \Xi =
n k T=\rho c^{2} \zeta^{-1}.\\
\end{array}
\end{equation}
\end{remark}

\begin{remark}
The recovery procedure for the admissible primitive variables $\rho$,
$u$, and $T$ from the nonnegative distribution $f$ or $g$, see
 \cite[Theorem 2.2]{kuang20163D}, will be used in  our {gas-kinetic schemes}.  The readers are also referred to \cite[Appendix A]{kuang20161D}  for its 1D version and a similar result in \cite[Theorem 2.1]{CHEN2017}.
\end{remark}

\section{Special relativistic Euler equations}
\label{sec:GovernEqns}
This section gives the special-relativistic Euler equations using the Maxwell-J$\ddot{\text{u}}$ttner equilibrium distribution $g$. The macroscopic variables $\rho, \varepsilon$ and $p$ are determined by $g$ according to \eqref{rhoepsilon} and \eqref{npip},
while the specific internal energy $e$ and the specific enthalpy $h$ are calculated by
\begin{equation}\label{EQ:EOS}
 e=\rho^{-1}\varepsilon-c^2=c^2(G(\zeta)-\zeta^{-1}-1), \quad h= \rho^{-1}(\varepsilon+p) =c^2 G(\zeta),
\end{equation}
 which is the equation of state for a single-component relativistic perfect gas \cite{synge1957}.
The speed of sound $c_s$ can be obtained from \eqref{EQ:EOS} by
\begin{align}\label{eq:soundspeed}
 \frac{c_s}{c} &= \sqrt{\frac{\frac{p}{\rho^2}-(\frac{\partial e}{\partial \rho})_p}{h(\frac{\partial e}{\partial p})_{\rho}}}
 =\sqrt{\frac{G'(\zeta)/G(\zeta)}{\zeta^{-1}+\zeta G'(\zeta)}}
 = \frac{\sqrt{  5G(\zeta) + \zeta-G(\zeta)^2\zeta}}{\sqrt{G(\zeta)(  5G(\zeta)\zeta + \zeta^2 -1-G(\zeta)^2\zeta^2)}},
 \end{align}
where $G'(\zeta)=G^2-5G\zeta^{-1}-1$. In the ultra-relativistic limit ($\zeta\ll1$),   $G(\zeta)\approx4\zeta^{-1}$ and $c_s\approx c/\sqrt{3}$, which are   consistent with our earlier research \cite{CHEN2017}.
For the sake of convenience,  units in which the speed of light, the mass of each structure-less particle
and the Boltzmann's constant are equal to one will be used hereafter.
\begin{theorem} If the primitive variables $\rho$, $u$, and $T$ are admissible in physics, then the speed of sound $c_s$ in \eqref{eq:soundspeed} satisfies $0<c_s<1$.
\end{theorem}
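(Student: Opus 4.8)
The plan is to show that the radicand in \eqref{eq:soundspeed}, i.e. $c_s^2/c^2$, lies strictly between $0$ and $1$ for every admissible state. Since the sound speed depends on $(\rho,u,T)$ only through $\zeta=mc^2/(kT)\in(0,\infty)$, this is a single inequality in one variable. The first step is to recast $c_s^2/c^2$ purely in terms of $G(\zeta)=K_3(\zeta)/K_2(\zeta)$: squaring the last member of \eqref{eq:soundspeed} and using the identity $G'=G^2-5\zeta^{-1}G-1$ stated after it, one has
\begin{equation*}
\frac{c_s^2}{c^2}=\frac{5G+\zeta-G^2\zeta}{G\,(5G\zeta+\zeta^2-1-G^2\zeta^2)}=\frac{-\zeta G'(\zeta)}{-G(\zeta)\bigl(1+\zeta^2G'(\zeta)\bigr)} .
\end{equation*}
Everything then reduces to two facts about $G$: the elementary bound $\zeta G(\zeta)>4$, and the ``master inequality'' $\zeta^2 G'(\zeta)\le-\tfrac43$ for all $\zeta>0$.

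The first fact is immediate from the recurrence $K_3(\zeta)=K_1(\zeta)+4\zeta^{-1}K_2(\zeta)$ together with $K_1(\zeta)>0$, which give $G(\zeta)=K_1(\zeta)/K_2(\zeta)+4/\zeta>4/\zeta$; in particular $G(\zeta)>1$ and $\zeta G(\zeta)-1>0$. Granting the master inequality, the theorem follows quickly. Indeed $\zeta^2G'\le-\tfrac43<0$ forces $G'<0$, so the numerator $-\zeta G'>0$; and $1+\zeta^2G'\le-\tfrac13$, so the denominator $-G(1+\zeta^2G')\ge\tfrac13 G>0$; hence $c_s^2/c^2>0$. For the upper bound, with numerator and denominator both positive, $c_s^2/c^2<1$ is equivalent to $-\zeta G'\,(\zeta G-1)>G$, i.e. to $\zeta|G'|\,(\zeta G-1)>G$; using $\zeta^2|G'|\ge\tfrac43$ and $\zeta G>4$ one gets $\zeta|G'|\,(\zeta G-1)\ge\tfrac43 G-\tfrac{4}{3\zeta}>G$, which is exactly what is needed.

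The hard part is the master inequality $\zeta^2 G'(\zeta)\le-\tfrac43$. Writing it out with $G'=G^2-5\zeta^{-1}G-1$ and solving the resulting quadratic in $G$ (the leading coefficient is positive, and the smaller root lies below $G$ because $G>4/\zeta$), it is equivalent to the explicit upper bound for the Bessel ratio
\begin{equation*}
G(\zeta)=\frac{K_3(\zeta)}{K_2(\zeta)}\le \frac{5+\sqrt{4\zeta^2+\tfrac{59}{3}}}{2\zeta},\qquad \zeta>0 .
\end{equation*}
I would prove this with the standard tools for ratios of modified Bessel functions: differentiating $G$ via $K_\nu'=-\tfrac12(K_{\nu-1}+K_{\nu+1})$ and the recurrences reduces $\zeta^2G'+\tfrac43$ to Tur\'an-type combinations such as $K_1K_3-K_2^2$, which are controlled by the log-convexity of $\nu\mapsto K_\nu(\zeta)$ and by sharp two-sided estimates for $K_\nu/K_{\nu-1}$ coming from the continued-fraction (Riccati) structure; alternatively, one can run a barrier/comparison argument directly on the ODE $G'=G^2-5\zeta^{-1}G-1$, using the known endpoint behaviour $G(\zeta)\sim 4/\zeta$ as $\zeta\to0^+$ and $G(\zeta)\to1$ as $\zeta\to\infty$ to rule out a crossing of the curve $\zeta^2G'=-\tfrac43$. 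The main obstacle is precisely this Bessel estimate: one needs a bound tight enough to yield the constant $\tfrac43$, and crude bounds (for instance $G(\zeta)\le(3+\sqrt{9+\zeta^2})/\zeta$) are not sharp enough; once such a bound is secured, the remaining steps are routine algebra.
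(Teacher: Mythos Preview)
Your reduction to the single ``master inequality'' $\zeta^2 G'(\zeta)\le-\tfrac43$ is elegant, and once it is granted your algebra for both $c_s>0$ and $c_s<1$ is correct. But the proposal has a genuine gap: you do not prove the master inequality. You rewrite it as the Bessel-ratio bound $G(\zeta)\le\bigl(5+\sqrt{4\zeta^2+59/3}\bigr)/(2\zeta)$ and then gesture at Tur\'an-type identities, continued-fraction estimates for $K_\nu/K_{\nu-1}$, or a barrier argument on the Riccati ODE---none of which is carried out, and you yourself acknowledge that the off-the-shelf bound $G\le(3+\sqrt{9+\zeta^2})/\zeta$ is too weak to give the constant $\tfrac43$. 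Since this inequality is the entire content of the theorem, the proposal as it stands is a reformulation rather than a proof.

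The paper's argument is different and completely elementary. Writing $\theta=\zeta^{-1}$, it first pins $G$ to the explicit interval $\bigl(\tfrac52\theta+1,\ \tfrac{2(6\theta^2+4\theta+1)}{3\theta+2}\bigr)$ by noting that the moment integrals $\int(E-1)^2 g\,d\varXi$ and $\int\tfrac{E-1}{E+1}g\,d\varXi$ are positive, and these reduce to polynomial conditions on $G$. Then each of the three quantities $\Phi_1=5G+\zeta-G^2\zeta$, $\Phi_2=G(5G\zeta+\zeta^2-1-G^2\zeta^2)$ (numerator and denominator of $c_s^2$) and $\Phi_3=\Phi_2-\Phi_1$ (controlling $c_s<1$) is shown to be concave in $G$ on that interval and positive at both endpoints, hence positive throughout. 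No Bessel-function estimates beyond the two moment positivities are needed. It is worth noting that the paper's upper bound on $G$ is in fact strong enough to imply your master inequality: a short computation shows $\tfrac{2(6\theta^2+4\theta+1)}{3\theta+2}\le\tfrac12\bigl(5\theta+\sqrt{4+\tfrac{59}{3}\theta^2}\bigr)$ for all $\theta>0$, so the moment-positivity trick is exactly the missing ingredient your sketch needs.
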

\begin{proof}
 (i) 
  Let us show the right-hand side of \eqref{eq:soundspeed} is
  well-defined, equivalently,
  prove $\Phi_1(G(\theta^{-1}),\theta):=  5G(\theta^{-1}) + \theta^{-1}-G(\theta^{-1})^2\theta^{-1}>0$ and $\Phi_2(G(\theta^{-1}),\theta):=G(\theta^{-1})( 5G(\theta^{-1})\theta^{-1} + \theta^{-2} -1-G(\theta^{-1})^2\theta^{-2})>0$ for all $\theta := \zeta^{-1}>0$.

The proof of Theorem 2.2 in \cite{kuang20163D} shows
 \begin{align*}
   & 0<\int_{\mathbb{R}^3} \frac{E-1}{E+1}gd\varXi
  = -\rho\theta\left((3\theta+2)G(\theta^{-1})-2(6\theta^2+4\theta+1)\right),            \\
   & 0<\int_{\mathbb{R}^3} (E-1)^2gd\varXi = \rho\left(2G(\theta^{-1})-5\theta-2\right),
 \end{align*}
 and $\frac{5}{2}\theta + 1<\frac{2(6\theta^2+4\theta+1)}{3\theta+2}$ for all $\theta>0$,
 thus for all $\rho,\theta>0$ one has
 \begin{equation*}
 G\in \left(\frac{5}{2}\theta + 1,\frac{2(6\theta^2+4\theta+1)}{3\theta+2}\right)=:{\mathcal I}_G.
 \end{equation*}

The function $\Phi_1$ is a concave quadratic function of $G(\theta^{-1})$ because $\partial^2_G \Phi_1=-\theta^{-1}<0$,
and
 \begin{align*}
   & \Phi_1\left(\frac{5}{2}\theta + 1,\theta\right) = \frac{25\theta}{4} > 0,                                                  \\
   & \Phi_1\left(\frac{2(6\theta^2+4\theta+1)}{3\theta+2},\theta\right) = \frac{\theta(36\theta^2+48\theta+7)}{(3\theta+2)^2}>0,
 \end{align*}
so $\Phi_1(G(\theta^{-1}),\theta)>0$ for all $\theta>0$.

Similarly,   $\Phi_2$ is a {concave} function of $G$ in the interval ${\mathcal I}_G$ because $\partial^2_G \Phi_2=-6\theta^{-2}(G-5\theta/3)<0$, and
 \begin{align*}
   & \Phi_2\left(\frac{5}{2}\theta + 1,\theta\right) = \frac{105\theta}{8} + \frac{21}{4}>0,                                                              \\
   & \Phi_2\left(\frac{2(6\theta^2+4\theta+1)}{3\theta+2},\theta\right) = \frac{324\theta^4 + 648\theta^3 + 378\theta^2 + 96\theta + 6}{(3\theta+2)^3}>0,
 \end{align*}
thus  $\Phi_{2}\left(G\left(\theta^{-1}\right), \theta\right)>0$ for all $\theta >0$.

 (ii) Let us prove   $c_s<1$, 
  which is equivalent to
 \[\Phi_3(G(\theta^{-1}),\theta):=-\theta^{-2}(G(\theta^{-1})^3-6G(\theta^{-1})^2\theta+6G(\theta^{-1})\theta^2-G(\theta^{-1})+\theta)>0.\]
It is easy to show that $\Phi_3$ is also a {concave} function of $G$ in the interval ${\mathcal I}_G$ because $\partial^2_G \Phi_3=-6\theta^{-2}(G-2\theta)<0$, and
%
 \begin{align*}
   & \Phi_3\left(\frac{5}{2}\theta + 1,\theta\right) = \frac{55}{8}\theta + \frac{21}{4}>0,                                                       \\
   & \Phi_3\left(\frac{2(6\theta^2+4\theta+1)}{3\theta+2},\theta\right) = \frac{216\theta^4+432\theta^3+261\theta^2+82\theta+6}{(3\theta+2)^3}>0.
 \end{align*}
Hence, $\Phi_{3}\left(G\left(\theta^{-1}\right), \theta\right)>0$ for all $\theta >0$ and equivalently  $0<c_s<1$.\qed
\end{proof}

At the local thermodynamic equilibrium where $f=g$, one has
\[N^{\alpha} = \rho U^{\alpha}, T^{\alpha\beta}=\rho h U^{\alpha}U^{\beta} - g^{\alpha\beta}p, \]
and the quantities $n^{\alpha}, \varPi$, and $\pi^{\alpha\beta}$ become zero,
so that the special RHD equations \eqref{conv}
can be written into a time-dependent system of conservation laws in the laboratory frame as follows
\begin{equation}\label{eq:Euler}
 \frac{\partial \vec{W}}{\partial t} + \sum^{d}_{k=1}\frac{\partial\vec F^{k}(\vec{W})}{\partial x_k} = 0,
\end{equation}
where
\begin{align*}
 \vec{W} =& \left(N^0, T^{01},\cdots, T^{0d},  T^{00}
 \right)^T
 = \left(
 \rho  U^0,
 \rho  h U^0U^1,\cdots,  \rho  h U^0U^d,
 \rho  h U^0U^0 - p
 \right)^T,
\\
 \vec{F^k(W)} =& \left( N^k,
 T^{k1},\cdots,T^{kd},
 T^{k0}
 \right)^T
 = \left(
 \rho  U^k,
 \rho  h U^kU^1 + p\delta^{k1},\cdots,  \rho  h U^kU^d + p\delta^{kd},
 \rho  h U^kU^0
 \right)^T.
\end{align*}
 Under \eqref{EQ:EOS}, 
the  Jacobian matrix
$\vec{A}^k=\partial \vec{F}^k/\partial \vec{W}$ is 
 diagonalizable with $(d+2)$ real eigenvalues
\begin{align*}
  & \lambda_k^{(1)} = \frac{U^i(1-c_s^2)-c_s\sqrt{1-u^2_i-(|\vec{u}|^2-u^2_i)c^2_s}}
  {U^0(1-|\vec{u}|^2c^2_s)},    \\
  & \lambda_k^{(2)} = \cdots = \lambda_i^{(d+1)} = u_i,                                          \\
  & \lambda_k^{(d+2)} = \frac{U^i(1-c_s^2)+c_s\sqrt{1-u^2_i-(|\vec{u}|^2-u^2_i)c^2_s}}{U^0(1-|\vec{u}|^2c^2_s)},
\end{align*}
where $c_s$ is the local sound speed calculated by \eqref{eq:soundspeed}.
In comparison with the non-relativistic and ultra-relativistic Euler equations, the conservative variables (the particle four-flow $N^0$ and the energy-momentum tensor $T^{0\alpha}$) in \eqref{eq:Euler} are strongly coupled through the Lorentz factor, so that it impossible to obtain the primitive variables (the mass density $\rho$, the fluid velocity $v_i$ and the energy density $\varepsilon$) or the flux $\vec{F^k}$ from the conservative variables
by any explicit form. Thus, in practical computations, the primitive variable
vector $\vec{V}=(\rho,u_1,\cdots, u_d,p)^T$ has to be first recovered from the known conservative vector
$\vec{U} = (N^0,T^{01},\cdots,T^{0d},T^{00})$ at each time step by numerically solving a nonlinear equation for the pressure $p$ such as
\begin{equation}\label{eq:con2pri}
 T^{00} + p = N^0U^0G(\zeta),
\end{equation}
where $U^0 = (1-\sum_{i=1}^d(T^{0i})^2/(T^{00}+p)^2)^{-1/2}$ and $\zeta=\frac{N^0}{U^0 p}$.
Any standard root-finding algorithm, e.g. Newton's iteration, may be used to solve \eqref{eq:con2pri}
to get the pressure $p$, and then $U^0, \rho, \zeta, h$ and $u_1,\cdots,u_d$ in order.

\section{BGK finite volume methods}\label{sec:scheme}
This section  introduces our 2D BGK finite volume method on the rectangular mesh, whose details can be found in \cite{CHEN2017}.
The starting point of such gas kinetic scheme is
the 2D AW model
\begin{align}\label{eq:2DAW}
    p^0\partial_tf + p^1\partial_x f + p^2\partial_y f = \frac{E(g-f)}{p^0\tau},
\end{align}
whose analytical solution can be given by
\begin{align}\label{eq:AWsolution}
\nonumber f(x,y,t,\vec{p})=&\int_{0}^{t}g(x',y', t',\vec{p})\exp\left(-\int_{t'}^{t}
\frac{U_{\alpha}(x'',y'',t'')p^{\alpha}}{p^{0}\tau}dt''\right)
\frac{U_{\alpha}(x',y',t')p^{\alpha}}{p^{0}\tau}dt'\\
&+\exp\left(-\int_{0}^{t}
\frac{U_{\alpha}(x',y',t')p^{\alpha}}{\tau p^{0}}dt'\right)f_{0}(x-v_1t,y-v_2t,\vec{p}),
\end{align}
where $v_1=p^1/p^0$ and $v_2=p^2/p^0$ are the particle velocities in $x$  and $y$ directions respectively, $\{x'=x-v_1(t-t'), y'=y-v_2(t-t')\}$  and  $\{x''=x-v_1(t-t''), y''=y-v_2(t-t'')\}$ are the particle trajectories,
and $f_{0}(x,y,\vec{p})$ is the initial particle velocity distribution function, i.e. $f(x,y,0,\vec{p})=f_{0}(x,y,\vec{p})$.


Divide the spatial domain $\Omega$ into a rectangular mesh with the cell $I_{i,j}=\{(x,y)|x_{i-\frac{1}{2}}<x<x_{i+\frac{1}{2}}, y_{j-\frac{1}{2}}<y<y_{j+\frac{1}{2}}\}$, where
$x_{i+\frac{1}{2}} = \frac{1}{2}(x_{i}+x_{i+1}), y_{j+\frac{1}{2}}=\frac{1}{2}(y_{j}+y_{j+1})$,  $x_i=i\Delta x, y_j = j\Delta y$, and $i,j\in\mathbb{Z}$. The time interval $[0,T]$
is also partitioned into a (non-uniform) mesh ${t_{n+1}=t_n+\Delta t_n, t_0=0, n\geqslant0}$, where  the time step size $\Delta t_n$ is determined by
\begin{equation}\label{TimeStep2D}
  \Delta t_n = \frac{C \min\{\Delta x, \Delta y\}}{\max\limits_{ij}\{\bar{\varrho}^1_{i,j},\bar{\varrho}^2_{i,j}\}},
\end{equation}
 here   $C$  and $\bar{\varrho}^k_{i,j}$ denote  the CFL number and the  approximation of the spectral radius of $\vec A^k(\vec{W})$ over the cell  $I_{i,j}$, $k=1,2$, at time $t_n$, respectively.

Taking the moments of \eqref{eq:2DAW}
and integrating them over the time-space control volume $I_{i,j}\times[t_n,t_{n+1})$ {yield} the 2D finite volume
scheme
\begin{equation}
\label{eq:2DEulerDisc}
  \bar{\vec{W}}^{n+1}_{i,j} = \vec{\bar{W}}^{n}_{i,j} - \frac{\Delta t_n}{\Delta x}(\hat{\vec{F}}^{1,n}_{i+\frac{1}{2},j} - \hat{\vec{F}}^{1,n}_{i-\frac{1}{2},j}) - \frac{\Delta t_n}{\Delta y}(\hat{\vec{F}}^{2,n}_{i,j+\frac{1}{2}} - \hat{\vec{F}}^{2,n}_{i,j-\frac{1}{2}}),
\end{equation}
where $\vec{\bar{W}}^{n}_{i,j}$ is the cell average approximation of   conservative vector $\vec{W}(x,y,t)$ over the cell $I_{i,j}$ at time $t_n$, i.e.
\begin{equation*}
  \vec{\bar{W}}^{n}_{i,j} \approx \frac{1}{\Delta x{\Delta y}}\int_{I_{i,j}}\vec{W}(x,y,t_n)dx{dy},
\end{equation*}
and $\hat{\vec{F}}^{1,n}_{i+\frac{1}{2},j}$ and $\hat{\vec{F}}^{2,n}_{i,j+\frac{1}{2}}$ approximate the fluxes along the interface of the cell $I_{i,j}$ as
\begin{align*}
  \hat{\vec{F}}^{1,n}_{i+\frac{1}{2},j}&\approx\frac{1}{\Delta t_n\Delta y}\int_{t_n}^{t_{n+1}}\int_{y_{j-\frac{1}{2}}}^{y_{j+\frac{1}{2}}}\vec{F}^{1}(x_{i+\frac{1}{2}},y,t) dy{dt},\\
  \hat{\vec{F}}^{2,n}_{i,j+\frac{1}{2}}&\approx\frac{1}{\Delta t_n\Delta x}\int_{t_n}^{t_{n+1}}\int_{x_{i-\frac{1}{2}}}^{x_{i+\frac{1}{2}}}\vec{F}^{2}(x,y_{j+\frac{1}{2}},t) dx{dt}.
\end{align*}
In the BGK  scheme, the numerical fluxes will be obtained by expanding $f$ in $x,y$ and $t$  and then using the conservation constraints \eqref{convc} and the moments \eqref{intTT} so that the surface and time integral are  analytically calculated.
 The fundamental task is thus to construct {\color{red}an} approximate distribution $\hat{f}$ on the cell interface which can be obtained from the analytical solution \eqref{eq:AWsolution} depending on the approximated initial distribution $f_{h,0}$ and the equilibrium $g_h$.

Let us  focus on the derivation of the numerical flux $\hat{\vec{F}}^{1,n}_{i+\frac{1}{2},j}$, 
because the numerical flux in $y$-direction can be similarly derived.
For our second order method,
\begin{equation}\label{eq:2DF}
  \hat{\vec{F}}^{1,n}_{i+\frac{1}{2},j}=\frac{1}{\Delta t_n}\int_{t_n}^{t_{n+1}}\int_{\mathbb{R}}p^1\hat{f}(x_{i+\frac{1}{2}},y_j,t,\vec{p})d\varXi dt\approx\frac{1}{\Delta t_n}\int_{t_n}^{t_{n+1}}\vec{F}^{1}(x_{i+\frac{1}{2}},y_j,t){dt},
\end{equation}
where $\hat{f}(x_{i+\frac{1}{2}},y_j,t,\vec{p})\approx f(x_{i+\frac{1}{2}},y_j,t,\vec{p})$ is derived  with the help of \eqref{eq:AWsolution} as follows
\begin{align}\label{eq:2DAWsoluapprox}
\nonumber \hat{f}(x_{i+\frac{1}{2}},y_j,t,\vec{p})=&\int_{t_n}^{t}g_h(x',y', t',\vec{p})\exp\left(-\int_{t'}^{t}
\frac{U_{\alpha}(x'',y'',t'')p^{\alpha}}{p^{0}\tau}dt''\right)
\frac{U_{\alpha}(x',y',t')p^{\alpha}}{p^{0}\tau}dt'\\
&+\exp\left(-\int_{t_n}^{t}
\frac{U_{\alpha}(x',y',t')p^{\alpha}}{\tau p^{0}}dt'\right)f_{h,0}(x_{i+\frac{1}{2}}-v_1\tilde{t},y_j-v_2\tilde{t},\vec{p}),
\end{align}
here $\tilde{t}=t-t_n$, $x'=x_{i+\frac{1}{2}}-v_1(t-t'), y'=y_j-v_2(t-t')$ and $x''=x_{i+\frac{1}{2}}-v_1(t-t''), y''=y_j-v_2(t-t'')$, $f_{h,0}(x_{i+\frac{1}{2}}-v_1\tilde{t},y_j-v_2\tilde{t},\vec{p})$ and $g_h(x',y',t',\vec{p})$ are (approximate)   initial   distribution function and equilibrium distribution function, respectively, which will be presented in the following with the ``simplified'' notations $(x_{i+\frac12},y_j)=(0,0)$ and $t_n=0$.
 In order to avoid getting $U_{\alpha}(x'',y'',t'')$ or $U_{\alpha}(x',y',t')$ along the particle trajectory, they may be taken as a constant $U_{\alpha,{i+\frac{1}{2},j}}^n$ instead.

\subsection{Calculation of $f_{h,0}$}\label{section-4.1}
For a nonequilibrium distribution $f$, the first-order Chapman-Enskog expansion of the AW model is
\begin{small}
\begin{equation*}
  f(x,y,t,\vec{p})=g-\frac{\tau}{E}\left(p^0{g_t+p^1g_x+p^2g_y}\right)+{\mathcal O}(\tau^2)
  =:g\left(1-\frac{\tau}{E}\left(p^0A+p^1a+p^2b\right)\right)+{\mathcal O}(\tau^2),
\end{equation*}
\end{small}
where the slopes $a,b$ and $A$  are related to $g$ by
\begin{equation}\label{eq:coefabA}
  a = g_x/g,\quad b = g_y/g, \quad A = g_t/g,
\end{equation}
which 
have a unique correspondence with the slopes of the conservative variables. The term $-\tau E^{-1}(p^0A+p^1a+p^2b)$ accounts for the deviation of a distribution function away from relativistic Maxwellian distribution. Using the conservation constraints \eqref{convc} gives
\begin{equation}
\label{eq:aAcons2D}
\int_{\mathbb{R}^{3}}\vec{\Psi}(p^{0}A+p^{1}a+p^2b)gd\varXi=\int_{\mathbb{R}^{3}}\vec{\Psi}(p^{0}g_{t}+p^{1}g_{x}+p^2g_y)d\varXi
=\frac{1}{\tau}\int_{\mathbb{R}^{3}}\vec{\Psi}E(g-f)d\varXi=0.
\end{equation}

The initial state $f_{h,0}(0,0)$ is assumed to be a nonequilibrium and discontinuous at the cell interface as follows
\begin{equation*}
    f_{h,0}(0,0)=\begin{cases}
        g^l\left(1-\frac{\tau}{E}\left(p^0A^l+p^1a^l+p^2b^l\right)\right),&x>0,\\
        g^r\left(1-\frac{\tau}{E}\left(p^0A^r+p^1a^r+p^2b^r\right)\right),&x<0,
    \end{cases}
\end{equation*}
where the distributions $g^l$ and $g^r$ are the left and right limits of the
Maxwell-J\"uttner distribution at the cell interface and can be obtained
by the reconstructed conservative variables, and
the coefficients $(a^l,b^l,A^l,a^r,b^r,A^r)$ depend on the momentum four-vector $p^{\alpha}$ and the conservative variables as follows
\[\Lambda = \Lambda_3 + \Lambda_1p^1 + \Lambda_2p^2 + \Lambda_0p^0, \quad \Lambda=a^l,b^l,A^l,a^r,b^r,A^r.\]
Using the Taylor series at the point $(0,0)$ 
further gives 
\begin{equation}\label{eq:initialstate}
    f_{h,0}(x,y)=\begin{cases}
        g^l\left(1+a^lx +b^ly-\frac{\tau}{E}\left(p^0A^l+p^1a^l+p^2b^l\right)\right),&x>0,\\
        g^r\left(1+a^rx+b^ry-\frac{\tau}{E}\left(p^0A^r+p^1a^r+p^2b^r\right)
        \right), &x<0.
    \end{cases}
\end{equation}

At $t=t_n$, we may reconstruct the piecewisely (discontinuous) linear polynomial $\vec{W}_h(x,y)$ in the $x$-direction by their cell averages $\{\vec{\bar{W}}^{n}_{i,j}\}$, e.g.
\begin{equation*}
   \vec{W}_h(x,y)=\vec{\bar{W}}^{n}_{i,j}+(\vec{\bar{W}}_x)^{n}_{i,j}(x-x_i)
   +(\vec{\bar{W}}_y)^{n}_{i,j}(y-y_j),\
  (x,y)\in I_{i,j}.
\end{equation*}
 Denote the left and right hand limits of $\vec{W}_h(x,y)$ and their partial derivatives in the $x,y$-directions at the interface center $(x_{i+\frac12},y_j)$ by $\vec W^l$, $\vec W^r$, $\vec W^l_{x}, \vec W^r_{x}$, $\vec W^l_{y}$, and  $\vec W^r_{y}$, respectively. Using the relations between the gas distribution function $f$ and the macroscopic variables yields the linear systems for $a^\omega$ and $b^\omega$ as
\begin{align*}
<a^{\omega}{p^0}>=\vec{W}^{\omega}_x,\ \
<b^{\omega}{p^0}>=\vec{W}^{\omega}_y, \ \omega=l,r,
\end{align*}
where
\[
{ <a^{\omega}>:=\int_{\mathbb{R}^{3}}a^{\omega}g^{\omega} \vec{\Psi}d\varXi},\ \ \omega=l,r.
\]
Their matrix forms
are
\begin{align*}
    M_{0}^{\omega}{\vec{a}}^\omega=\vec{W}^{\omega}_{x},\quad
    M_{0}^{\omega}{\vec{b}}^\omega=\vec{W}^{\omega}_{y},
\end{align*}
where $\vec{a}^\omega:=(a^{\omega}_0, a^{\omega}_1, a^{\omega}_2, a^{\omega}_3)^T$, $\vec{b}^\omega:=(b^{\omega}_0, b^{\omega}_1, b^{\omega}_2, b^{\omega}_3)^T$, and  the coefficient matrix is defined by
\[
  M_{0}^{\omega}=\int_{\mathbb{R}^{3}}p^{0}g^{\omega}\vec{\Psi}\vec{\Psi}^Td\varXi,
\]
 presented in Section \ref{sec:moments} in detail.
After having the values of $a^l,b^l$ and $a^r,b^r$, substituting them into the conservation constraints \eqref{eq:aAcons2D} gives the linear system for $A^\omega$  as
\[
<a^{\omega}p^{1}+b^{\omega}p^{2}+A^{\omega}p^{0}>=0,\ \omega=l,r,
\]
or
\begin{equation}\label{eq:lA2D}
M_{0}^{\omega}\vec{A}^{\omega}=-M_{1}^{\omega}\vec{a}^{\omega}-M_{2}^{\omega}\vec{b}^{\omega},
\end{equation}
with
\begin{align*}
M_{1}^{\omega}=\int_{\mathbb{R}^{3}}p^{1}g^{\omega}\vec{\Psi}\vec{\Psi}^Td\varXi,\ \
M_{2}^{\omega}=\int_{\mathbb{R}^{3}}p^{2}g^{\omega}\vec{\Psi}\vec{\Psi}^Td\varXi, \ \omega=l,r.
\end{align*}
All  elements of the  matrices $M_1^{\omega}$ and $M_2^{\omega}$ can be explicitly calculated by using some coordinate transformation, see Section \ref{sec:moments}.  Up to now, the coefficients $(a^l,b^l,A^l,a^r,b^r,A^r)$ in \eqref{eq:initialstate} have been calculated from the reconstructed derivatives $\vec W^l_{x}, \vec W^r_{x}$, $\vec W^l_{y}$, and  $\vec W^r_{y}$ so that the initial distribution $f_{h,0}$ is determined.

\subsection{Calculation of $g_h$}
The equilibrium distribution $g$ in the neighbourhood of $(x_{i+\frac12},y_j,t_n)=(0,0,0)$ is approximated by
\begin{equation}\label{eq:TaylorEqui}
    g_h(x,y,t) = g^0(1+a^0x+b^0y+A^0t),
\end{equation}
where $g^0$ is a local Maxwell-J$\ddot{\text{u}}$ttner equilibrium
located at $(x_{i+\frac12},y_j)=(0,0)$ and $a^0,b^0,A^0$ are related to the space and time derivatives of $g$ at the point $(0,0)$, see \eqref{eq:coefabA}.
For the ideal gases, the distributions at both sides of a cell interface are the Maxwell-J$\ddot{\text{u}}$ttner equilibrium, thus we can determine the particle four-flow $N^{\alpha}$ and the energy-momentum tensor $T^{\alpha\beta}$ at the cell interface by
\begin{align*}
   &N^{\alpha}_0 = \int_{\mathbb{R}^{3}}p^{\alpha}g^0d\varXi= \int_{\mathbb{R}^{3}\cap{p^{1}>0}} p^{\alpha}g^ld\varXi + \int_{\mathbb{R}^{3}\cap{p^{1}<0}} p^{\alpha}g^rd\varXi, \\
   &T^{\alpha,\beta}_0 = \int_{\mathbb{R}^{3}}p^{\alpha}p^{\beta}g^0d\varXi= \int_{\mathbb{R}^{3}\cap{p^{1}>0}} p^{\alpha}p^{\beta}g^ld\varXi + \int_{\mathbb{R}^{3}\cap{p^{1}<0}} p^{\alpha}p^{\beta}g^rd\varXi.
\end{align*}
Using those and Theorem \ref{thm:NT} calculates the macroscopic quantities $\rho^0, T^0$ and $U_{\alpha}^0$,
and then gives the Maxwell-J\"uttner distribution function by
\[ g^0=\frac{\rho^0 \zeta^0}{4\pi K_2(\zeta^0)}\exp\left(\zeta^0 U_{\alpha}^0p^{\alpha}\right),\]
which reflects the modeling of the collision process leading to the equilibrium.
Using the cell interface values $\{\vec{W}^0_{i+\frac12,j}\}$ 
reconstructs the following approximate derivatives at the point $(x_{i+\frac12},y_j)=(0,0)$
\begin{align*}
    \vec{W}_{x}^0 = \frac{\bar{\vec{W}}_{i+1,j}-\bar{\vec{W}}_{i,j}}{\Delta x},\quad  \vec{W}^0_{y} = \frac{\vec{W}^0_{i+\frac12,j+1}-\vec{W}^0_{i+\frac12,j-1}}{2\Delta y},
\end{align*}
and then the coefficients in  \eqref{eq:TaylorEqui} are determined by solving the   linear systems for $a^0, b^0$ and $A^0$
\[
<a^{0}{p^0}>=\vec{W}^0_{x},\quad <b^{0}{p^0}>=\vec{W}^{0}_{y},\quad <A^{0}p^{0}+a^{0}p^{1}+b^{0}p^{2}>=0,
\]
or
\begin{equation*}
M_{0}^{0}\vec{a}^{0}=\vec{W}^{0}_{x},\quad M_{0}^{0}\vec{b}^{0}=\vec{W}^{0}_{y}, \quad M_{0}^{0}\vec{A}^{0}= -M_{1}^{0}\vec{a}^{0} -M_{2}^{0}\vec{b}^{0},
\end{equation*}
 where the elements of $M_0^0, M_1^0$ and $M_2^0$ will be given later, see Section \ref{sec:moments}.

\subsection{Derivation of $\hat{f}$ and its simplification}
Up to now, all of the coefficients in the initial gas distribution function $f_{h,0}$ and the equilibrium state $g_h$ have been given at $t^n=0$. Substituting \eqref{eq:initialstate} and \eqref{eq:TaylorEqui} into \eqref{eq:2DAWsoluapprox}
 gives
\begin{small}
\begin{align}\label{eq:finalstate1}
 &\hat{f}(x_{i+\frac{1}{2}},y_j,t,\vec{p})
 =g^0\left(1-\exp\left(-\frac{U_{\alpha}^0p^{\alpha}}{p^0\tau}t\right)\right)
+g^0a^0v_1\left(\left(t+\frac{p^0\tau}{U_{\alpha}^0p^{\alpha}}\right)\exp\left(-\frac{U_{\alpha}^0p^{\alpha}}{p^0\tau}t\right)-\frac{p^0\tau}{U_{\alpha}^0p^{\alpha}}\right)\notag\\
&+g^0b^0v_2\left(\left(t+\frac{p^0\tau}{U_{\alpha}^0p^{\alpha}}\right)\exp\left(-\frac{U_{\alpha}^0p^{\alpha}}{p^0\tau}t\right)-\frac{p^0\tau}{U_{\alpha}^0p^{\alpha}}\right)
+g^0A^0\left(t-\frac{p^0\tau}{U_{\alpha}^0p^{\alpha}}\left(1-\exp\left(-\frac{U_{\alpha}^0p^{\alpha}}{p^0\tau}t\right)\right)\right)\notag\\
&+H[v_1]g^l\left(1-\frac{\tau}{U_{\alpha}^lp^{\alpha}}(p^0A^l+p^1a^l+p^2b^l)-a^lv_1t-b^lv_2t\right)\exp\left(-\frac{U_{\alpha}^0p^{\alpha}}{p^0\tau}t\right)\notag\\
&+(1-H[v_1])g^r\left(1-\frac{\tau}{U_{\alpha}^rp^{\alpha}}(p^0A^r+p^1a^r+p^2a^r)-a^rv_1t-b^rv_2t\right)\exp\left(-\frac{U_{\alpha}^0p^{\alpha}}{p^0\tau}t\right),
\end{align}
\end{small}%
where $H(x)$ is the Heaviside function with $H(x)=1$ for $x\leq0$ and $H(x)=0$ otherwise.
Combining \eqref{eq:finalstate1} with \eqref{eq:2DF} yields the numerical flux $\hat{\vec{F}}^{1,n}_{i+\frac{1}{2}}$, while the numerical flux $\hat{\vec{F}}^{2,n}_{j+\frac{1}{2}}$ can be obtained in a similar procedure.

Rearranging  the terms in \eqref{eq:finalstate1} gives
\begin{align}\label{eq:finalstate}
 \hat{f}(x_{i+\frac{1}{2}},&y_j,t,\vec{p})
 =g^0-\frac{\tau}{U_{\alpha}^0p^{\alpha}}\left(p^1g^0_x+p^2g^0_y+p^0g^0_t\right) + g^0_tt\notag\\
 &- \exp\left(-\frac{{U}_{\alpha}^0p^{\alpha}}{p^0\tau} t \right) \left({g}^0-\frac{\tau}{{U}_{\alpha}^0p^{\alpha}}(p^0{g}^0_t+p^1{g}^0_x+p^2g^0_y)-{g}^0_xv_1t - {g}^0_yv_2t\right)\notag\\
 &+ \exp\left(-\frac{U^0_{\alpha}p^{\alpha}}{p^0\tau} t \right) \left(\hat{g}-\frac{\tau}{\hat{U}_{\alpha}p^{\alpha}}(p^0\hat{g}_t+p^1\hat{g}_x+p^2\hat{g}_y)-\hat{g}_xv_1t - \hat{g}_yv_2t\right),
\end{align}
where
    $\hat{g}=g^lH(v_1) + g^r(1-H(v_1))$, $
    \hat{g_x} = g^l_xH(v_1) + g^r_x(1-H(v_1))$,
and the notations for the time derivative and space derivative in $y$-direction are similar. The term $-\frac{\tau}{U_{\alpha}^0p^{\alpha}}\left(p^0g^0_x+p^1g^0_y+p^0g^0_t\right)$ is exactly the nonequilibrium state derived from the Chapman-Enskog expansion of the AW model, while $g_t^0t$ is the time evolution part of the gas distribution function. In addition, the equilibrium terms with the exponential factor has an expression similar to the initial state, but with opposite signs.

For the inviscid fluid flows,   the particles are always in equilibrium, i.e. the particle collision time $\tau=0$ so that the dominant part in $\hat{f}(x_{i+\frac{1}{2}},y_j,t,\vec{p})$ is $g^0+g^0_tt$ since the nonequilibrium state  (depending on $\tau$) and the exponential terms disappear. The term $g^0+g^0_tt$ gives the second order approximation in time for the distribution on cell interface, while the second order in space is accomplished by the reconstruction.
It is well known that the width of the shock wave is proportional to the mean free path of the particle,  which is the product of the average collision time $\tau$ between the particles and the average velocity \cite{XuBook}. For the invisid flow, the particle collision time $\tau = 0$ leads to the width of shock is also zero, thus the solution may have a discontinuity. However, the shock structure cannot be resolved exactly with the limited mesh cells for the gas kinetic scheme. Consequently, the shock thickness is enlarged to the mesh size thickness from
the mean free path scale. In practice, there is not a unique theory for the construction of numerical collision time $\tau_n$, here we use the formula in \cite{LiuTang2014}
\begin{equation*}
\tau_n=C_{1}\Delta t^{\alpha_1}_n+C_{2}\Delta t^{\alpha_2}_{n}\frac{|p^{l}-p^{r}|}{p^{l}+p^{r}},
\end{equation*}
where $p^l$ and $p^r$ are the left and right-hand limits of the pressure at the cell interface, respectively, $C_1$, $C_2$ and $\alpha_1, \alpha_2$ are constants.
In smooth region, $\tau_n$ is small since the left and right-hand limits $\vec{W}^l$ and $\vec{W}^r$ are approximately equal, so that the dominant part in   $\hat{f}(x_{i+\frac{1}{2}}, y_j,t,\vec{p})$
is $g^0+g^0_t\tilde{t}$. However, the pressure jump in the shock structure causes an increase in $\tau_n$, which is equivalent to an increase in the shock width, thereby suppressing the numerical oscillation. From the numerical results in \cite{LiuTang2014}, we know that $\tau_n$ will affect the accuracy of the gas kinetic scheme. The  convergence rate can reach the theoretical value when the value of $\alpha_1, \alpha_2$ matches the order of the scheme. 

The above BGK scheme with \eqref{eq:finalstate} or \eqref{eq:finalstate1}
is very time-consuming thanks to the quadratures for the many three-dimensional  moment integrals of the non-equilibrium part in the approximate distribution $\hat{f}$, so that it is no longer practical. In view of this, it
is necessary to present a simplified/ecomonic BGK scheme.
To do that,  let us first investigate the contribution of the terms in the distribution $\hat{f}(x_{i+\frac{1}{2}},y_j,t,\vec{p})$ in \eqref{eq:finalstate} to the numerical flux $\hat{\vec{F}}^{1,n}_{i+\frac{1}{2},j}$.
If assuming that the primitive variables are linearly reconstructed  in the cell $I_{i,j}$ as
\begin{equation}\label{eq:reconstruct_poly}
  \vec{p}_{i,j}(x,y)=\bar{\vec{V}}_{i,j}+\frac{\bar{\vec{V}}_{i+1,j}-\bar{\vec{V}}_{i,j}}{\Delta x} (x-x_i)
  + \frac{\bar{\vec{V}}_{i,j+1}-\bar{\vec{V}}_{i,j}}{\Delta y}(y-y_j),
\end{equation}
then the left hand limit of $\vec{V}$ on the cell interface
is $\vec{V}_{i+1/2,j,L}=\vec{p}_{i,j}(x_{i+\frac12},y_j)$,
and
\begin{equation}\label{eq:goal}
  \vec{V}_{i+1/2,j,L}-\vec{V}(x_{i+\frac12},y_j)=\bar{\vec{V}}_{i,j}+\frac{\bar{\vec{V}}_{i+1,j}-\bar{\vec{V}}_{i,j}}{2} - \vec{V}(x_{i+\frac12},y_j).
\end{equation}
To simplify the notation, here and hereafter, $y_j$ and subscript $j$ will be omitted as
$\vec{V}(x_{i+\frac12}):=\vec{V}(x_{i+\frac12},y_j), \vec{V}_{i+1/2,L}:=\vec{V}_{i+1/2,j,L}$.
Denote the primitive function of $\vec{V}(x,y_j)$   by 
$
  \tilde{\vec{V}}(x)=\int^x_{x_0}\vec{V}(\xi)d\xi$,
where  $x_0$ is not important in our discussion, and
calculate its point value as 
  $\tilde{\vec{V}}(x_{i+\frac12})=\sum_{k=-i_0}^i\bar{\vec{V}}_k\Delta x$.
Using the point values of $\tilde{\vec{V}}(x)$ at $x_{i-\frac12},x_{i+\frac12},x_{i+\frac32}$ interpolate uniquely a quadratic polynomial $\vec{P}(x)$ as 
\begin{equation}\label{eq:lagrange_Vtilde}
  \vec{P}(x)=\sum^2_{m=0}\tilde{\vec{V}}(x_{i+m-\frac12})\prod_{\substack{l=0\\l\neq m}}^2\frac{x-x_{i+l-\frac12}}{x_{i+m-\frac12}-x_{i+l-\frac12}}.
\end{equation}
%
Subtracting $\tilde{\vec{V}}(x_{i-\frac12})$ from both ends of   \eqref{eq:lagrange_Vtilde} and using the following equation
\[ \sum^2_{m=0}\prod_{\substack{l=0\\l\neq m}}^2\frac{x-x_{i+l-\frac12}}{x_{i+m-\frac12}-x_{i+l-\frac12}}=1, \]
gives
\begin{equation*}
  \vec{P}(x)-\tilde{\vec{V}}(x_{i-\frac12})=\bar{\vec{V}}_i\frac{(x-x_{i-\frac12})(x-x_{i+\frac32})}{-\Delta x}
  +(\bar{\vec{V}}_i+\bar{\vec{V}}_{i+1})\frac{(x-x_{i-\frac12})(x-x_{i+\frac12})}{2\Delta x}.
\end{equation*}
Taking the derivative of the above equation gives
the reconstructed polynomial \eqref{eq:reconstruct_poly} at $y=y_j$.
It means that the
interpolation polynomial $\vec{P}(x)$ of $\tilde{\vec{V}}(x)$ can be used to analyze \eqref{eq:goal}.
Again taking the derivative of \eqref{eq:lagrange_Vtilde} gives
\begin{equation}\label{eq:Pprime}
  \vec{p}(x) = \sum^2_{m=0}\tilde{\vec{V}}(x_{i+m-\frac12}) \cdot 
  \Big({\sum\limits_{\substack{l=0\\l\neq m}}^2 \prod\limits_{\substack{q=0\\q\neq l,m}}^2 x-x_{i+q-\frac12}}\Big)
  \Big({\prod\limits_{\substack{l=0\\l\neq m}}^2x_{i+m-\frac12}-x_{i+l-\frac12}}\Big)^{-1}.
\end{equation}
Using the Taylor expansion of $\tilde{\vec{V}}(x_{i+\frac12\pm 1})$
at $x_{i+\frac12}$
\begin{align*}
  \tilde{\vec{V}}(x_{i+\frac12\pm 1}) = \tilde{\vec{V}}(x_{i+\frac12}) \pm \tilde{\vec{V}}'(x_{i+\frac12})\Delta x
  + \frac12\tilde{\vec{V}}''(x_{i+\frac12})\Delta x^2\pm \frac16\tilde{\vec{V}}'''(x_{i+\frac12})\Delta x^3 + {\mathcal O}(\Delta x^4),
\end{align*}
yields
  \begin{align*}
      \vec{p}(x_{i+\frac12}) =& \frac{1}{2\Delta x}(\tilde{\vec{V}}(x_{i+\frac32}) - \tilde{\vec{V}}(x_{i-\frac12}))
                             = \vec{V}(x_{i+\frac12}) + \frac16 \tilde{\vec{V}}'''(x_{i+\frac12})\Delta x^2 + {\mathcal O}(\Delta x^3).
  \end{align*}
Thus the right hand side of \eqref{eq:goal} is equal to
that $\frac16 \tilde{\vec{V}}'''(x_{i+\frac12})\Delta x^2 + {\mathcal O}(\Delta x^3)$.
Similarly, one has $\vec{V}_{i+1/2,R}-\vec{V}(x_{i+\frac12}) = -\frac13 \tilde{\vec{V}}'''(x_{i+\frac12})\Delta x^2 + {\mathcal O}(\Delta x^3)$  by using the interpolation polynomial in the cell $I_{i+1,j}$.

The above discussion will be used to investigate the difference quotient
\begin{equation}\label{eq:fluxdiff}
  \frac{1}{\Delta x}(\hat{f}(x_{i+1/2},y_j,t,\vec{p})-\hat{f}(x_{i-1/2},y_j,t,\vec{p})),
\end{equation}
with the distribution function $\hat{f}$ in \eqref{eq:finalstate}.
%
According to the calculation of $g^0(\vec V_L, \vec V_R)$,
 one has $g^0(\vec{V},\vec{V})=g(\vec{V})$ and $(\nabla_{\vec{V}_L}g^0 + \nabla_{\vec{V}_R}g^0)(\vec{V},\vec{V})=(\nabla_{\vec{V}}g)(\vec{V})$
 when $\vec{V}_L=\vec{V}_R=\vec{V}$. Thus the first part of the expansion of \eqref{eq:fluxdiff} is
  \begin{align*}
     &g^0(\vec{V}_{L},\vec{V}_{R})|_{i+\frac12}-g^0(\vec{V}_{L},
     \vec{V}_{R})|_{i-\frac12}\nonumber\\
    =&g_{i+\frac12} + (\nabla_{\vec{V_L}}g)(\vec{V}_{i+\frac12})
    (\vec{V}_{i+\frac12,L}-\vec{V}_{i+\frac12})
    +(\nabla_{\vec{V_R}}g)(\vec{V}_{i+\frac12})(\vec{V}_{i+1/2,R}-\vec{V}_{i+\frac12}) \nonumber\\
     & - g_{i-\frac12} - (\nabla_{\vec{V_L}}g)(\vec{V}_{i-\frac12})( \vec{V}_{i-\frac12,L}-\vec{V}_{i-\frac12}) -(\nabla_{\vec{V_R}}g)(\vec{V}_{i-\frac12})(\vec{V}_{i-1/2,R}
     -\vec{V}_{i-\frac12}) +{\mathcal O}(\Delta x^4),
  \end{align*}
where $\vec{V}_{i\pm\frac12}=\vec{V}(x_{i\pm\frac12}), g_{i\pm\frac12}=g(\vec{V}_{i\pm\frac12})$ and the Taylor expansion has been used. It is easy to check that
$g_{i+\frac12}-g_{i-\frac12}=g_{x,i}\Delta x + {\mathcal O}(\Delta x^3)$, while
\begin{align}\label{eq:g0diff}
    & (\nabla_{\vec{V_L}}g)(\vec{V}_{i+\frac12})(\vec{V}_{i+\frac12,L}
    -\vec{V}_{i+\frac12}) - (\nabla_{\vec{V_L}}g)(\vec{V}_{i-\frac12})( \vec{V}_{i-\frac12,L}-\vec{V}_{i-\frac12} )
    \nonumber\\
  =&((\nabla_{\vec{V_L}}g)(\vec{V}_{i+\frac12}) - (\nabla_{\vec{V_L}}g)(\vec{V}_{i-\frac12}))(\vec{V}_{i+\frac12,L}
  -\vec{V}_{i+\frac12})\nonumber\\
    & + (\nabla_{\vec{V_L}}g)(\vec{V}_{i-\frac12})
    (\vec{V}_{i+\frac12,L}-\vec{V}_{i+\frac12}-  \vec{V}_{i-\frac12,L}+\vec{V}_{i-\frac12})={\mathcal O}(\Delta x^3),
\end{align}
 since $(\nabla_{\vec{V_L}}g)(\vec{V}_{i+\frac12}) - (\nabla_{\vec{V_L}}g)(\vec{V}_{i-\frac12})=(\nabla_{\vec{V_L}}g)_{x,i}\Delta x+ {\mathcal O}(\Delta x^3)$ and
$  \vec{V}_{i+\frac12,L}-\vec{V}(x_{i+\frac12})-  \vec{V}_{i-\frac12,L}+\vec{V}(x_{i-\frac12})
  = \frac16 \tilde{\vec{V}}'''(x_{i+\frac12})\Delta x^2
   - \frac16 \tilde{\vec{V}}'''(x_{i-\frac12})\Delta x^2 + {\mathcal O}(\Delta x^3)
  = {\mathcal O}(\Delta x^3).
$
Similarly, $\vec{V}_{i+\frac12,R}-\vec{V}(x_{i+\frac12})-  \vec{V}_{i-\frac12,R}+\vec{V}(x_{i-\frac12})={\mathcal O}(\Delta x^3)$.
Therefore, the first part of the expansion of \eqref{eq:fluxdiff} is
\begin{equation}
  g^0(\vec{V}_{L},\vec{V}_{R})|_{i+\frac12}-g^0(\vec{V}_{L},\vec{V}_{R})|_{i-\frac12} = g_{x,i}\Delta x + {\mathcal O}(\Delta x^3).
\end{equation}
Similarly, the second part of the expansion for \eqref{eq:fluxdiff}, $(g^0_{t,i+\frac12}-g^0_{t,i-\frac12})\tilde{t}$, is  equal to $(g_{tx,i}\Delta x + {\mathcal O}(\Delta x^3))\tilde{t}$.
For the smooth problems, $\tau$ is taken as ${\mathcal O}(\Delta x^2)$ in our second-order BGK scheme, so the third part of the expansion for \eqref{eq:fluxdiff}
\begin{align*}
  &\frac{\tau}{{U}_{\alpha}^0p^{\alpha}}(p^0{g}^0_t+p^1{g}^0_x+p^2g^0_y)\bigg|_{i-\frac12}
  - \frac{\tau}{{U}_{\alpha}^0p^{\alpha}}(p^0{g}^0_t+p^1{g}^0_x+p^2g^0_y)\bigg|_{i+\frac12}\\
  &=(\frac{\tau}{{U}_{\alpha,i-\frac12}^0p^{\alpha}} - \frac{\tau}{{U}_{\alpha,i+\frac12}^0p^{\alpha}} )(p^0{g}^0_t+p^1{g}^0_x+p^2g^0_y)\bigg|_{i-\frac12}\\
  &+ \frac{\tau}{{U}_{\alpha,i+\frac12}^0p^{\alpha}}\left( (p^0{g}^0_t+p^1{g}^0_x+p^2g^0_y)\bigg|_{i-\frac12}
  - (p^0{g}^0_t+p^1{g}^0_x+p^2g^0_y)\bigg|_{i+\frac12}
  \right)
\end{align*}
is ${\mathcal O}(\Delta x^3)$,
and similarly, the forth part of the expansion of \eqref{eq:fluxdiff}
    \begin{align*}
      &\exp\left(-\frac{U^0_{\alpha}p^{\alpha}}{p^0\tau}\tilde{t}\right)
      \left[\hat{g}-\frac{\tau}{\hat{U}_{\alpha}p^{\alpha}}(p^0\hat{g}_t+p^1\hat{g}_x+p^2\hat{g}_y)
      -\hat{g}_xv_1\tilde{t} - \hat{g}_yv_2\tilde{t} \right.\nonumber\\
      & \left.-\left({g}^0-\frac{\tau}{{U}_{\alpha}^0p^{\alpha}}(p^0{g}^0_t+p^1{g}^0_x+p^2g^0_y)
          -{g}^0_xv_1\tilde{t} - {g}^0_yv_2\tilde{t}\right)\right]\bigg|_{i+\frac12}\nonumber\\
      &-\exp\left(-\frac{U^0_{\alpha}p^{\alpha}}{p^0\tau}\tilde{t}\right)
      \left[\hat{g}-\frac{\tau}{\hat{U}_{\alpha}p^{\alpha}}(p^0\hat{g}_t+p^1\hat{g}_x+p^2\hat{g}_y)
      -\hat{g}_xv_1\tilde{t} - \hat{g}_yv_2\tilde{t} \right.\nonumber\\
      & \left. -\left({g}^0-\frac{\tau}{{U}_{\alpha}^0p^{\alpha}}(p^0{g}^0_t+p^1{g}^0_x+p^2g^0_y)
          -{g}^0_xv_1\tilde{t} - {g}^0_yv_2\tilde{t}\right)\right]\bigg|_{i-\frac12},
    \end{align*}
is also ${\mathcal O}(\Delta x^3)$.
In summary, one has
\begin{align*}
  \frac{\hat{f}(x_{i+\frac{1}{2}},y_j,t,\vec{p})-\hat{f}(x_{i-\frac{1}{2}},y_j,t,\vec{p}) }{\Delta x}&=g_{x,i} + g_{xt,i}\tilde{t} + O(\Delta x^2)
  =g^0_{x,i} + g^0_{xt,i}\tilde{t} + {\mathcal O}(\Delta x^2).
\end{align*}
It can be seen that the term affecting accuracy in the distribution function $\hat{f}(x_{i+\frac{1}{2}},y_j,t,\vec{p})$ is $g^0+g^0_t\tilde{t}$, while the remainder $\hat{f}(x_{i+\frac{1}{2} },y_j,t,\vec{p})-g^0-g^0_t\tilde{t}$ only works near the discontinuity.
Thus a simplified gas kinetic scheme may be obtained by removing the terms which are not easy to be calculated in the moment integrals.
If the conservation variables are reconstructed, the same conclusion will be obtained in a similar way.

Based on the above discussion, we will {simplify} the distribution function $\hat{f}(x_{i+\frac{1}{2}},y_j,t,\vec{p})$ in \eqref{eq:finalstate} or \eqref{eq:finalstate1} in order to get an economic/simplified BGK schemes.
For the inviscid fluid flows, the non-equilibrium terms in the distribution function $\hat{f}$  only provides the  numerical viscosity near the discontinuity, so that they may be removed in order to simplify the integrals. On the other hand, since the terms with the coefficient $\exp\left(-\frac{{U}_{\alpha}^0p^{\alpha}}{p^0\tau}\tilde{t}\right)$ does not affect the accuracy for the smooth problems and only works near the discontinuity, so we try to simplify it appropriately by ignoring the terms including $\tilde{t}$.
At this point one can replace the distribution function $\hat{f}(x_{i+\frac{1}{2}},y_j,t,\vec{p})$ in \eqref{eq:finalstate} or \eqref{eq:finalstate1} with a simplified version
\begin{align}\label{eq:modified_f}
      \hat{f}(x_{i+\frac{1}{2}},y_j,t,\vec{p})=& g^0 + g_t\tilde{t}
 - \exp\left(-\frac{{U}_{\alpha}^0p^{\alpha}}{p^0\tau}\tilde{t}\right) {g}^0
 + \exp\left(-\frac{U^0_{\alpha}p^{\alpha}}{p^0\tau}\tilde{t}\right) \hat{g}\nonumber\\
 =& \left(1-\exp\left(-\frac{{U}_{\alpha}^0p^{\alpha}}{p^0\tau}\tilde{t}\right)\right)g^0
 + \exp\left(-\frac{{U}^0_{\alpha}p^{\alpha}}{p^0\tau}\tilde{t}\right) \hat{g} + g_t^0\tilde{t},
\end{align}
which it is much simpler than $\hat{f}$ in \eqref{eq:finalstate}.
The sum $\left(1-\exp\left(-\frac{{U}_{\alpha}^0p^{\alpha}}{p^0\tau}\right)\right)g^0 + \exp\left(-\frac{{U}^0_{\alpha}p^{\alpha}}{p^0\tau}\right) \hat{g}$ approximates the equilibrium distribution at the point $(x_{i+1/2},y_j)$ using a non-linear weighted average, which simultaneously contains the particle collisions and the free transport. The third term ensures that the approximate distribution function
 $\hat{f}(x_{i+\frac{1}{2}},y_j,t,\vec{p})$ can achieve second order in time. In Section \ref{sec:test},  the   BGK scheme with \eqref{eq:finalstate} or \eqref{eq:finalstate1} and the simplified BGK scheme with \eqref{eq:modified_f} are compared by using different examples in terms of accuracy, efficiency and resolution.

\begin{remark}
  The terms with the coefficient of $\exp\left(-\frac{{U}_{\alpha}^0p^{\alpha}}{p^0\tau}\tilde{t}\right)$
  are the nonliear weight adjusting the collision and transport effects in the distribution function at the cell interface. They do not affect the accuracy of the scheme and only provide numerical viscosity near the discontinuity. Therefore, in order to make the integral simple, $\exp\left(-\frac{{U}_{\alpha}^0p^{\alpha}}{p^0\tau}\tilde{t}\right)$is reduced to $\exp\left(-\frac{\tilde{t}}{\tau}\right)$ in the practical calculations. 
\end{remark}


\section{Moment integrals}\label{sec:moments}
 Calculating the expansion coefficients $a, b$ and $A$ in the initial distribution function $f_{h,0}$ and the equilibrium distribution $g_h$
 requires
  the moments of Maxwell-J\"uttner distribution function $g$ in \eqref{juttner}.
  This section will give the moment expressions by using the Lorentz transform, and then the elements of the matrix $M^0 $, $M^1 $ and $M^2 $.

\subsection{Matrices $M^{0}$, $M^1$ and $M^2$}
The matrices  $M^{0}$, $M^1$ and $M^2$ will be first  given in the local rest frame $(U^{\alpha})=(1,0,0,0)$ and then the Lorentz transformation is used to give the expressions in a general frame.
In the local rest frame, one has $E=U_{\alpha}p^{\prime\alpha}=p^{\prime0}$  and
\[\int_{\mathbb{R}^3}(p^{\prime0})^l gd\varXi = \frac{\rho \zeta}{4\pi K_2(\zeta)}\int_{\mathbb{R}^3} (p^{\prime0})^{l-1}e^{-\zeta p^{\prime0}} d^3\vec{p}\prime.\]
The polar coordinate transformation $$p^{\prime1}=r\sin\theta\cos\varphi,p^{\prime2}=r\sin\theta\sin\varphi,
p^{\prime3}=r\cos\theta,\theta\in[0,\pi],\varphi\in[0,2\pi),r\in[0,\infty),$$ gives $p^{\prime0}=\sqrt{1+p^{\prime1}+p^{\prime2}+p^{\prime3}}=\sqrt{1+r^2}$, so that
\begin{align*}
   \int_{\mathbb{R}^3}(p^{\prime0})^l gd\varXi =& \frac{\rho\zeta}{K_2}\int_0^{+\infty}(\sqrt{1+r^2})^{l-1}e^{-\zeta\sqrt{1+r^2}}r^2dr\\
   =&\frac{\rho\zeta}{K_2}\int^{+\infty}_0 (\cosh^{l+2}x-\cosh^l x)e^{-\zeta\cosh x}dx,
\end{align*}
where $r=\sinh x$ has been used to obtain the last equation. For the hyperbolic cosine function, we have the following power reduction
\[
    \cosh^l x=\begin{cases}
    \frac{1}{2^{l-1}}\left(\sum\limits_{i=0}^{k-1}\binom{l}{i}\cosh((l-2i)x)+\frac{1}{2}\binom{l}{k}\right), & l=2k,\\
    \frac{1}{2^{l-1}}\left(\sum\limits_{i=0}^{k}\binom{l}{i}\cosh((l-2i)x)\right), &l=2k+1,
    \end{cases}
\]
where $k=0,1,\cdots.$ With the help of above formula and the definition of the modified Bessel function of the second kind $K_n(\zeta)=\int_0^\infty e^{-\zeta\cosh x}\cosh(nx)dx$, we have
\begin{align*}
   &\int_{\mathbb{R}^3}(p^{\prime0})^l gd\varXi/ \frac{\rho\zeta}{2^{l+1}K_2} \\
   =& \begin{cases}
   K_{l+2}(\zeta) + \sum\limits_{i=1}^{k}\left( \binom{l+2}{i} -4\binom{l}{i-1}\right) K_{l+2(1-i)}(\zeta) + \frac{1}{2}\left( \binom{l+2}{k+1} -4\binom{l}{k}\right)K_0(\zeta)  ,&l=2k,\\
   K_{l+2}(\zeta) + \sum\limits_{i=1}^{k+1}\left( \binom{l+2}{i} -4\binom{l}{i-1}\right) K_{l+2(1-i)}(\zeta)  ,&l=2k+1.
   \end{cases}
\end{align*}
For example,
\begin{align*}
&\int_{\mathbb{R}^3}gd\varXi = \rho\left(G-\frac{4}{\zeta}\right),\quad\int_{\mathbb{R}^3}p^{\prime0}gd\varXi = \rho,\\
&\int_{\mathbb{R}^3}(p^{\prime0})^2 gd\varXi = \rho\left(G-\frac{1}{\zeta}\right),\quad \int_{\mathbb{R}^3}(p^{\prime0})^3gd\varXi = \rho\left(\frac{3G}{\zeta}+1\right).
\end{align*}
The elements of $M^i$ in the local rest frame need the integrals in the interval $\mathbb{R}^3$, where the terms with odd power of $p^{\prime\alpha},\alpha=1,2,3$, will vanish,
while the  terms with even power of $p^{\prime\alpha}$ can be calculated with the help of $p^{\prime0}$, such as $$\int_{\mathbb{R}^3}(p^{\prime1})^2 gd\varXi = \int_{\mathbb{R}^3}(p^{\prime2})^2 gd\varXi = \int_{\mathbb{R}^3}(p^{\prime3})^2 gd\varXi = \frac13\int_{\mathbb{R}^3}((p^{\prime0})^2-1) gd\varXi.$$
The four-vector $p^{\alpha}$ in a general frame can be described by $(p^{\prime\alpha})$ in the local rest frame via the transformation
\begin{equation*}
  p^{\alpha} = \Lambda^{\alpha}_{\beta}p^{\prime\beta},\ \
 \Lambda^{\alpha}_{\beta}=
   \begin{pmatrix}
    U^0   & U^1                                   & U^2                                  & 0\\
    U^1  & 1+\frac{(U^0-1)u_1u_1}{|\vec{u}|^2}    & \frac{(U^0-1)u_1u_2}{|\vec{u}|^2}     & 0 \\
    U^2  & \frac{(U^0-1)u_2u_1}{|\vec{u}|^2}      & 1+\frac{(U^0-1)u_2u_2}{|\vec{u}|^2}   & 0\\
    0     & 0    & 0  & 1
  \end{pmatrix}.
  \end{equation*}
Hence the elements of $M^i$ in the general frame will be calculated by
\begin{align*}
    &\int_{\mathbb{R}^3} p^{\alpha} gd\varXi = \Lambda^{\alpha}_{\beta}\int_{\mathbb{R}^3} p^{\prime\beta} gd\varXi,\\
    &\int_{\mathbb{R}^3} p^{\alpha}p^{\beta} gd\varXi = \Lambda^{\alpha}_{\gamma}\Lambda^{\beta}_{\epsilon}\int_{\mathbb{R}^3} p^{\prime\gamma} p^{\prime\epsilon}gd\varXi,\\
    &\int_{\mathbb{R}^3} p^{\alpha}p^{\beta}p^{\gamma} gd\varXi = \Lambda^{\alpha}_{\mu}\Lambda^{\beta}_{\nu}\Lambda^{\gamma}_{\kappa}\int_{\mathbb{R}^3} p^{\prime\mu} p^{\prime\nu}p^{\prime\kappa}gd\varXi.
\end{align*}
The explicit expressions of $M^i,i=0,\cdots,d$ will be given in Appendices \ref{M0M11D} and \ref{M2D} for the 1D  and 2D cases, respectively.



\subsection{Moments in half plane}
For calculating the numerical flux by inserting the $\hat{f}$ in \eqref{eq:modified_f} or \eqref{eq:finalstate}  into \eqref{eq:2DF}, the integrals of the Maxwell-J\"uttner distribution in the half plane are needed. The specific integral formulas for the 1D and 2D cases are given
below.

Similar to \cite{Anderson1974}, the momentum $p^{\alpha}$ at a point is decomposed as
\begin{equation*}
    p^{\alpha} = U^{\alpha}E+\sqrt{E^2-1}l^{\alpha},
\end{equation*}
where $l^{\alpha}$ is an unit space-like vector orthogonal to $U^{\alpha}$, i.e.
\[l^{\alpha}l_{\alpha}=-1,l^{\alpha}U_{\alpha}=0.\]
Introduce an orthogonal tetrad $n^{\alpha}_i(i=1,2,3)$
orthogonal to $U^{\alpha}$ so that
 \[U_{\alpha}n^{\alpha}_i=0,\quad g_{\alpha\beta}n^{\alpha}_in^{\beta}_j=-\delta_{i,j}.\]
The scalar product of two four-vectors $U_{\alpha}n^{\alpha}_i$ and the tensor product $g_{\alpha\beta}n^{\alpha}_in^{\beta}_j$ are invariant, so  the Lorentz transformation can be used to the local rest frame with $(U^{\alpha})=(1,0,0,0)$ and  $n^{\alpha}_i=\delta_{i,\alpha}(i=1,2,3)$, which satisfy $U_{\alpha}n^{\alpha}_i=0$ and  $g_{\alpha\beta}n^{\alpha}_in^{\beta}_j=-\delta_{i,j}$. Then $n^{\alpha}_i$ in the general frame can be given by
\begin{equation*}
   n^0_i=U^i,\quad n^j_i=((U^0)^2-1)^{-1}U^iU^j(U^0-1)+\delta_{i,j},\quad i,j=1,2,3,
\end{equation*}
and $l^{\alpha}$  is taken as
\begin{equation*}
  l^{\alpha} = a_1n_1^{\alpha} + a_2n_2^{\alpha} + a_3n_3^{\alpha},
\end{equation*}
where $(a_1,a_2,a_3)$ is any unit vector.
It is easy to check that $l^{\alpha}U_{\alpha}=0$ and $l^\alpha l_{\alpha}=0$.

\subsubsection{1D case}
In the 1D case, let $(U^{\alpha})=(U^0,U^1,0,0)$, and $a_1=\cos\theta,a_2=\sin\theta\cos\varphi,a_3=\sin\theta\sin\varphi$, then
\begin{align*}
    &p^0 = U^0E + U^1\sqrt{E^2-1} \cos\theta,\quad
    p^1 = U^1E + U^0\sqrt{E^2-1} \cos\theta ,\\
    &p^2 = \sqrt{E^2-1}\sin \theta \cos\varphi,\quad
    p^3 = \sqrt{E^2-1}\sin \theta \sin\varphi,
\end{align*}
where $\theta\in[0,\pi],\varphi\in[0,2\pi),E\in[1,\infty)$.
Using that coordinate transformation and the volume element $d\varXi=\sqrt{E^2-1}\sin\theta d\varphi d\theta dE$ can simplify
the calculation of the triple integrals in the moment computations, which   {reduce} to a double integral with respect to $E$ and $\theta$ in one dimension since the integrands  do  not depend on the variable $\varphi$.
Specially, the moments for any arbitrary function $\phi$ of $p^{\alpha}$ in the half plane can be obtained by
\begin{align*}
\int_{\mathbb{R}^{3}\cap{p^{1}>0}}\phi gd\varXi=&\frac{\rho\zeta}{2K_{2}(\zeta)}\left(H(u_1)\int_{1}^{\frac{1}{\sqrt{1-u_1^2}}}\int_{-1}^{1}\phi\exp(-\zeta E)\sqrt{E^2-1}d\eta dE \right.\notag\\
&\left.+\int_{\frac{1}{\sqrt{1-u_1^2}}}^{\infty}\int_{-\frac{u_1E}{\sqrt{E^2-1}}}^{1}\phi\exp(-\zeta E)\sqrt{E^2-1}d\eta dE\right),\\
\int_{\mathbb{R}^{3}\cap{p^{1}<0}}\phi gd\varXi=&\frac{\rho\zeta}{2K_{2}(\zeta)}\left((1-H(u_1))\int_{1}^{\frac{1}{\sqrt{1-u_1^2}}}\int_{-1}^{1}\phi\exp(-\zeta E)\sqrt{E^2-1}d\eta dE \right.\notag\\
&\left.+\int_{\frac{1}{\sqrt{1-u_1^2}}}^{\infty}\int_{-1}^{-\frac{u_1E}{\sqrt{E^2-1}}}\phi\exp(-\zeta E)\sqrt{E^2-1}d\eta dE\right),
\end{align*}
which can be integrated numerically, where $\eta=\cos\theta\in[-1,1]$ for $\theta\in[0,\pi]$.

\subsubsection{2D case}
The 2D case is much more complicate than the 1D case.
 Let   $(U^{\alpha})=(U^0,U^1,U^2,0)$, and $a_1=\sqrt{1-\eta^2}\cos\varphi$, $a_2=\sqrt{1-\eta^2}\sin\varphi$,
$a_3= \eta$, then
\begin{align*}
    &p^0 = U^0E + \sqrt{E^2-1}\sqrt{1-\eta^2}\left(U^1\cos\varphi + U^2\sin\varphi \right),\\
    &p^1 = U^1E + \sqrt{E^2-1}\sqrt{1-\eta^2}\left(\left(1+\frac{(U^1)^2}{1+U^0}\right)\cos\varphi + \frac{U^1U^2}{1+U^0}\sin\varphi \right),\\
    &p^2 = U^2E+\sqrt{E^2-1}\sqrt{1-\eta^2}\left(\left(\frac{U^1U^2}{1+U^0}\right)\cos\varphi + \left(1+\frac{(U^2)^2}{1+U^0}\right)\sin\varphi
    \right),\\
    &p^3 = \sqrt{E^2-1}\eta,
\end{align*}
where $\eta\in[-1,1],\varphi\in[0,2\pi),E\in[1,\infty)$, and the volume element
in the new coordinate becomes $d\varXi=\sqrt{E^2-1}d\varphi d\eta dE$.
%
Write $p^1$ as
\[p^1=U^1 E + \sqrt{E^1-1}\sqrt{1-\eta^2}\sqrt{1+(U^1)^2}\sin(\varphi + \varphi_c),\]
with $\sin\varphi_c=\left(1+(U^1)^2/(1+U^0)\right)/\sqrt{1+(U^1)^2}$ and $\cos\varphi_c=U^1U^2/(1+U^0)/\sqrt{1+(U^1)^2}$
For the sake of simplicity, we will still use $\varphi$ instead of $\varphi+\varphi_c$ hereafter. Thus $p^{\alpha}$ will be rewritten as
\begin{equation}\label{eq:p1}
\begin{aligned}
    &p^1=U^1 E + \sqrt{E^2-1}\sqrt{1-\eta^2}\sqrt{1+(U^1)^2}\sin\varphi,\\
    &p^0 = U^0E +\frac{ \sqrt{E^2-1}\sqrt{1-\eta^2}}{\sqrt{1+(U^1)^2}}\left(U^0U^1\sin\varphi-U^2\cos\varphi\right),\\
    &p^2 = U^2E+\frac{\sqrt{E^2-1}\sqrt{1-\eta^2}}{\sqrt{1+(U^1)^2}}\left(U^1U^2\sin\varphi - U^0\cos\varphi\right),\\
    &p^3 = \sqrt{E^2-1}\eta,
\end{aligned}
\end{equation}
where $\eta\in[-1,1],\varphi\in[0,2\pi),E\in[1,\infty)$.

From   \eqref{eq:p1}, it is not difficult to know that $p^1>0$ is equivalent to
\[
\sin\varphi>-\frac{U^1  E}{\sqrt{1+(U^1)^2}\sqrt{E^2-1}\sqrt{1-\eta^2}}=:g(E,\eta),
\]
which holds only for $g(E,\eta)\leq1$ since $\sin\varphi\leq1$.
It includes two cases: $g(E,\eta)\leq-1$ when $\varphi\in(0,2\pi)$ and $|g(E,\eta)|<-1$ when $\varphi\in(\arcsin g,\pi-\arcsin g)$.
The values of $E$ and $\eta$ should be discussed case by case as follows:
\begin{enumerate}[(i)]
\item $g(E,\eta)\leq -1$ is equivalent to $\sqrt{1-\eta^2}\leq\frac{U^1E}{\sqrt{1+(U^1)^2}\sqrt{E^2-1}}=:h(E)$. It is only valid for $U^1\geq0$ since $\sqrt{1-\eta^2}\geq0$.
On the other hand, 
if $h(E)\geq1$, i.e. $E\leq\sqrt{1+(U^1)^2}$, then
$\eta\in(-1,1)$; otherwise,
$$\eta\in\left(-1,-\sqrt{1-h(E)^2}\right)\cup\left(\sqrt{1-h(E)^2},1\right).$$
In this case, the integral of an arbitrary function $\phi(p^{\alpha})$ in the half plane (i.e. $p^1>0$) is calculated by
\begin{align*}
&\int_{\mathbb{R}^{3}\cap{p^{1}>0}}\phi gd\varXi=H(U^1)\left(\int_{1}^{\frac{1}{\sqrt{1+(U^1)^2}}}\int_{-1}^{1}\int_0^{2\pi}\phi g\sqrt{E^2-1}d\varphi d\eta dE \right.\\
& + \int_{\frac{1}{\sqrt{1+(U^1)^2}}}^{+\infty}\int_{-1}^{-\sqrt{1-h(E)^2}}\int_0^{2\pi}\phi g\sqrt{E^2-1}d\varphi d\eta dE
 \\ & \left.+ \int_{\frac{1}{\sqrt{1+(U^1)^2}}}^{+\infty}\int^{1}_{\sqrt{1-h(E)^2}}\int_0^{2\pi}\phi g\sqrt{E^2-1}d\varphi d\eta dE\right).
\end{align*}

\item $|g(E,\eta)|\leq 1$ is equivalent to $|\eta|<\sqrt{1-h(E)^2}$. From $h(E)^2\leq1$, one can get $E\geq\sqrt{1+(U^1)^2}$. Thus, in this case, the integral of an arbitrary function $\phi(p^{\alpha})$ in the positive half plane is calculated by
\begin{align*}
\int_{\mathbb{R}^{3}\cap{p^{1}>0}}\phi gd\varXi=\int_{\sqrt{1+(U^1)^2}}^{+\infty}\int_{-\sqrt{1-h(E)^2}}^{\sqrt{1-h(E)^2}}\int_{\arcsin g}^{\pi-\arcsin g}\phi g\sqrt{E^2-1}d\varphi d\eta dE.
\end{align*}
\end{enumerate}

Combining those two cases in above we conclude that the integral of $\phi$ in the positive half plane ($p^1>0$) is obtained by
\begin{align}
\int_{\mathbb{R}^{3}\cap{p^{1}>0}}\phi gd\varXi&=H(U^1)\left(\int_{1}^{\frac{1}{\sqrt{1+(U^1)^2}}}\int_{-1}^{1}\int_0^{2\pi}\phi g\sqrt{E^2-1}d\varphi d\eta dE\right.\notag\\
 &\left.+2\int_{\frac{1}{\sqrt{1+(U^1)^2}}}^{+\infty}\int^{1}_{\sqrt{1-h(E)^2}}\int_0^{2\pi}\phi g\sqrt{E^2-1}d\varphi d\eta dE\right)\notag\\
&+\int_{\sqrt{1+(U^1)^2}}^{+\infty}\int_{-\sqrt{1-h(E)^2}}^{\sqrt{1-h(E)^2}}\int_{\arcsin g}^{\pi-\arcsin g}\phi g\sqrt{E^2-1}d\varphi d\eta dE,\label{eq:PositiveP1}
\end{align}
since $\phi$ is an even function with respect to $\eta$ in two dimension.

In the case of $p^1<0$, the integral can be done in a similar way by
\begin{align}
\int_{\mathbb{R}^{3}\cap{p^{1}<0}}\phi gd\varXi
&=                      \left(1-H(U^1)\right)\left(\int_{1}^{\frac{1}{\sqrt{1+(U^1)^2}}}\int_{-1}^{1}\int_0^{2\pi}\phi g\sqrt{E^2-1}d\varphi d\eta dE \right.\notag\\
 &\left.+ 2\int_{\frac{1}{\sqrt{1+(U^1)^2}}}^{+\infty}\int^{1}_{\sqrt{1-h(E)^2}}\int_0^{2\pi}\phi g\sqrt{E^2-1}d\varphi d\eta dE\right)\notag\\
&+\int_{\sqrt{1+(U^1)^2}}^{+\infty}\int_{-\sqrt{1-h(E)^2}}^{\sqrt{1-h(E)^2}}\int_{\pi-\arcsin g}^{2\pi+\arcsin g}\phi g\sqrt{E^2-1}d\varphi d\eta dE. \label{eq:negativeP1}
\end{align}

In \eqref{eq:PositiveP1} and \eqref{eq:negativeP1}, the triple integrals can not be simplified at all in the new coordinates.
However, fortunately, in our simplified BGK methods, the integrand function $\phi$ is one of the forms $p^{\alpha}$ and $p^{\alpha}p^{\beta}$, while  those terms $p^{\alpha}$ and $p^{\alpha}p^{\beta}$ are just some linear combinations of $Q_i$, $i=1,\cdots, 6$, defined by \begin{align*}
Q_1:&=1,\ Q_2:=\sqrt{1-\eta^2}\cos\varphi,\ Q_3:=\sqrt{1-\eta^2}\sin\varphi,
\\
Q_4:&=(1-\eta^2)\cos2\varphi,\ Q_5:=(1-\eta^2)\sin2\varphi,\ Q_6:=1-\eta^2,
\end{align*}
for which $\eta$ and $\varphi$ can be integrated exactly. As a result, the triple integrals in the half plane reduce to a single integral with respect to $E$, which can be effectively integrated.
Specially, the integrals
\begin{equation*}
\begin{aligned}
   &I_{0,k}=\int_{-1}^{1}\int_0^{2\pi}Q_kd\varphi d\eta,\quad I_{1,k}=\int_{\sqrt{1-h(E)^2}}^{1}\int_0^{2\pi}Q_kd\varphi d\eta,\\
   &I_{2,k}=\int_{-\sqrt{1-h(E)^2}}^{\sqrt{1-h(E)^2}}\int_{\arcsin g}^{\pi-\arcsin g}Q_kd\varphi d\eta,
   \quad I_{3,k}=\int_{-\sqrt{1-h(E)^2}}^{\sqrt{1-h(E)^2}}\int_{\pi-\arcsin g}^{2\pi+\arcsin g}Q_kd\varphi d\eta,
\end{aligned}
\end{equation*}
where $k=1,\cdots,6$, are respectively calculated as follows
$$
\begin{aligned}
  &I_{0,1}=4\pi,\ I_{0,6}=\frac{8}{3}\pi,\
  I_{1,1}=2\pi\left(1-\sqrt{1-h(E)^2}\right),
  I_{\ell_1,\ell_2}=0, \ell_1=1,2, \ell_2=2,\cdots,5,\\
  &I_{1,6}=\frac{2}{3}\pi\left(1-\sqrt{1-h(E)^2}\right)^2\left(2+\sqrt{1-h(E)^2}\right),\\
  &I_{2,1}=\int_{-\sqrt{1-h(E)^2}}^{\sqrt{1-h(E)^2}}\left(\pi+2\arcsin\left(\frac{h(E)}{\sqrt{1-\eta^2}}\right)\right)d\eta \\
  &\quad~= 2\pi\sqrt{1-h(E)^2}(1+\sign(h(E)))-2\pi\sign(h(E))+2\pi h(E),\\
  &I_{2,2}=I_{2,5}=0,\quad
  I_{2,3}=\int_{-\sqrt{1-h(E)^2}}^{\sqrt{1-h(E)^2}}\left(2\sqrt{1-\eta^2-h(E)^2}\right)d\eta = \pi(1-h(E)^2),\\
   &I_{2,4}=\int_{-\sqrt{1-h(E)^2}}^{\sqrt{1-h(E)^2}}2h\sqrt{1-\eta^2-h(E)^2}d\eta = h(E)I_{2,3}=h(E)\pi(1-h^2),
  \end{aligned}
 $$
 $$
\begin{aligned}
  &I_{2,6}=\int_{-\sqrt{1-h(E)^2}}^{\sqrt{1-h(E)^2}}(1-\eta^2)\left(\pi+2\arcsin \left(\frac{h(E)}{\sqrt{1-\eta^2}}\right)\right)d\eta \\
  &\quad~= \frac{2}{3}\pi\sqrt{1-h(E)^2}(h(E)^2+2)(1+\sign h(E))+\frac{1}{3}\pi(h(E)^3-4\sign h(E)) + \pi h(E),\\
  &I_{3,1}=\int_{-\sqrt{1-h(E)^2}}^{\sqrt{1-h(E)^2}}\left(\pi-2\arcsin \left(\frac{h(E)}{\sqrt{1-\eta^2}}\right)\right)d\eta \\
  &\quad~= 2\pi\sqrt{1-h(E)^2}(1-\sign h(E))+2\pi\sign h(E)-2\pi h(E),\\
  &I_{3,2}=I_{3,5}=0,I_{3,3}=-I_{2,3}=-\pi(1-h(E)^2),\quad I_{3,4}=-I_{2,4}=-h(E)\pi(1-h(E)^2),\\
  &I_{3,6}=\int_{-\sqrt{1-h(E)^2}}^{\sqrt{1-h(E)^2}}(1-\eta^2)\left(\pi-2\arcsin \left(\frac{h(E)}{\sqrt{1-\eta^2}}\right)\right)d\eta \\
  &\quad~= \frac{2}{3}\pi\sqrt{1-h(E)^2}(h(E)^2+2)(1-\sign h(E))-\frac{1}{3}\pi(h(E)^3-4\sign h(E)) - \pi h(E).
\end{aligned}
$$

\section{Numerical experiments}
\label{sec:test}
This section will solve several 1D and 2D problems of the special-relativistic Euler equations for the perfect relativistic gas to demonstrate the accuracy and efficiency of our simplified BGK (sBGK) schemes, which will be compared to the second-order accurate BGK-type  and KFVS schemes.
Moreover, our sBGK schemes are also  compared to
the  BGK  scheme (before simplification) by using the 1D numerical results 
in order to illustrate that the former is not inferior to
the latter in terms of the shock wave capture and the accuracy.
In our computations, the characteristic variables are reconstructed with the van Leer limiter,
and the collision time $\tau$  is taken as
\[
\tau=\tau_m+C_{2}\Delta t^{\alpha_2}_{n}\frac{|p_{L}-p_{R}|}{p_{L}+p_{R}},
\]
where $\tau_m=C_{1}\Delta t^{\alpha_1}_n$, $C_1$, $C_2$
$\alpha_1$ and $\alpha_2$ are four constants, $p_L$ and $p_R$ are the left and right-hand limits of the pressure at the cell interface, respectively.
Unless specifically stated,  $C_1=C_2=1$,  $\alpha_1=2$, $\alpha_2=1$, and the time step-size $\Delta t_n$ is determined by the CFL condition with the CFL number of 0.4.

\subsection{1D case}
\begin{example}[Accuracy test]\label{ex:accurary1D}\rm To check the accuracy of BGK, sBGK, KFVS and BGK-type schemes, we first solve a smooth problem, which describes a sine wave propagating  periodically in the domain $\Omega=[0,1]$. The 
exact solutions are given by
 \[ \rho (x,t)= 1+0.5\sin(2\pi (x-0.2t)), \quad u_1(x,t)=0.2, \quad p(x,t)=1.\]
The domain $\Omega$ is divided into $N$ uniform cells and the periodic boundary conditions are specified at $x=0,1$.
 \scriptsize
 \begin{table}[H]
   \setlength{\abovecaptionskip}{0.cm}
   \setlength{\belowcaptionskip}{-0.cm}
   \caption{Example \ref{ex:accurary1D}: Numerical $l^1$-errors of $\rho$ and convergence rates at $t = 0.2$ by BGK, sBGK, KFVS and BGK-type schemes.}\label{accuracy}
     \begin{tabular}{*{9}{c}}
       \toprule
       \multirow{2}*{$N$} &\multicolumn{2}{c}{BGK} &\multicolumn{2}{c}{sBGK} &\multicolumn{2}{c}{KFVS} &\multicolumn{2}{c}{BGK-type}\\
       \cmidrule(lr){2-3}\cmidrule(lr){4-5}\cmidrule(lr){6-7}\cmidrule(lr){8-9}
       & error  & order  &   error  &   order &   error  & order  &   error  & order \\
       \midrule
       25   & 1.7274e-03 &  --       & 1.7541e-03   & --          &2.2608e-03    &--            & 1.7061e-03  & --      \\
       50   & 4.9909e-04 &  1.7913   & 5.1213e-04  & 1.7761       &6.0147e-04    &1.9103        & 4.2336e-04  & 2.0107  \\
       100  & 1.2569e-04 &  1.9894   & 1.2875e-04  & 1.9919       &1.5074e-04    &1.9964        & 1.1309e-04  & 1.9044  \\
       200  & 3.1865e-05 &  1.9798   & 3.2534e-05  & 1.9846       &3.7148e-05    &2.0207        & 2.6244e-05  & 2.1075  \\
       400  & 7.1502e-06 &  2.1559   & 7.1503e-06  & 2.1859       &8.8233e-06    &2.0739        & 6.4297e-06  & 2.0291  \\
     \bottomrule
     \end{tabular}
 \end{table}
\end{example}
Table \ref{accuracy} gives the $l^1$-errors of $\rho$  at $t=0.2$  and  corresponding convergence rates for the BGK, sBGK, KFVS and BGK-type schemes.
The results show that all those schemes can achieve second-order accuracy, which are in accordance with the theoretic results.
 However, the simplified BGK scheme is simpler and more efficient than the BGK.

The following simulates three Riemann problems in  the domain $[0,1]$, whose the analytic solutions are built on Appendix
\ref{sub:wavestructure}.
\begin{example}[Riemann problem I]\label{ex:1DRP3}\rm
 The initial data are taken as
 \begin{equation*}
   \label{eqex:1DRP3}
    (\rho,u_1,p)(x,0)=
   \begin{cases}
     (1,-0.5,2),& x<0.5,\\
     (1,0.5,2),& x>0.5.
   \end{cases}
 \end{equation*}

Fig. \ref{fig:1DRP3} plots the numerical results at $t=0.5$ obtained by the BGK scheme (``{$\circ$}"), the KFVS scheme (``{$*$}") and the BGK-type scheme (``{$+$}") with 200 uniform cells.
 The solutions consists of a left-moving rarefaction wave, a stationary contact discontinuity, and a right-moving rarefaction wave,
Fig. \ref{fig22:1DRP3} give a comparison of the sBGK scheme with the BGK scheme.
It is seen that
 the numerical solutions are in good agreement with the exact solutions, but there exists serious undershoot in the density at $x=0.5$. The phenomena is also observed in corresponding shock tube problem of the non-relativistic case. The sBGK scheme performs as well as the  BGK scheme, but much simpler than the original one.
\end{example}

\begin{figure}[h]
 \centering
 \subfigure[$ \rho $]{
   \includegraphics[width=0.45\textwidth]{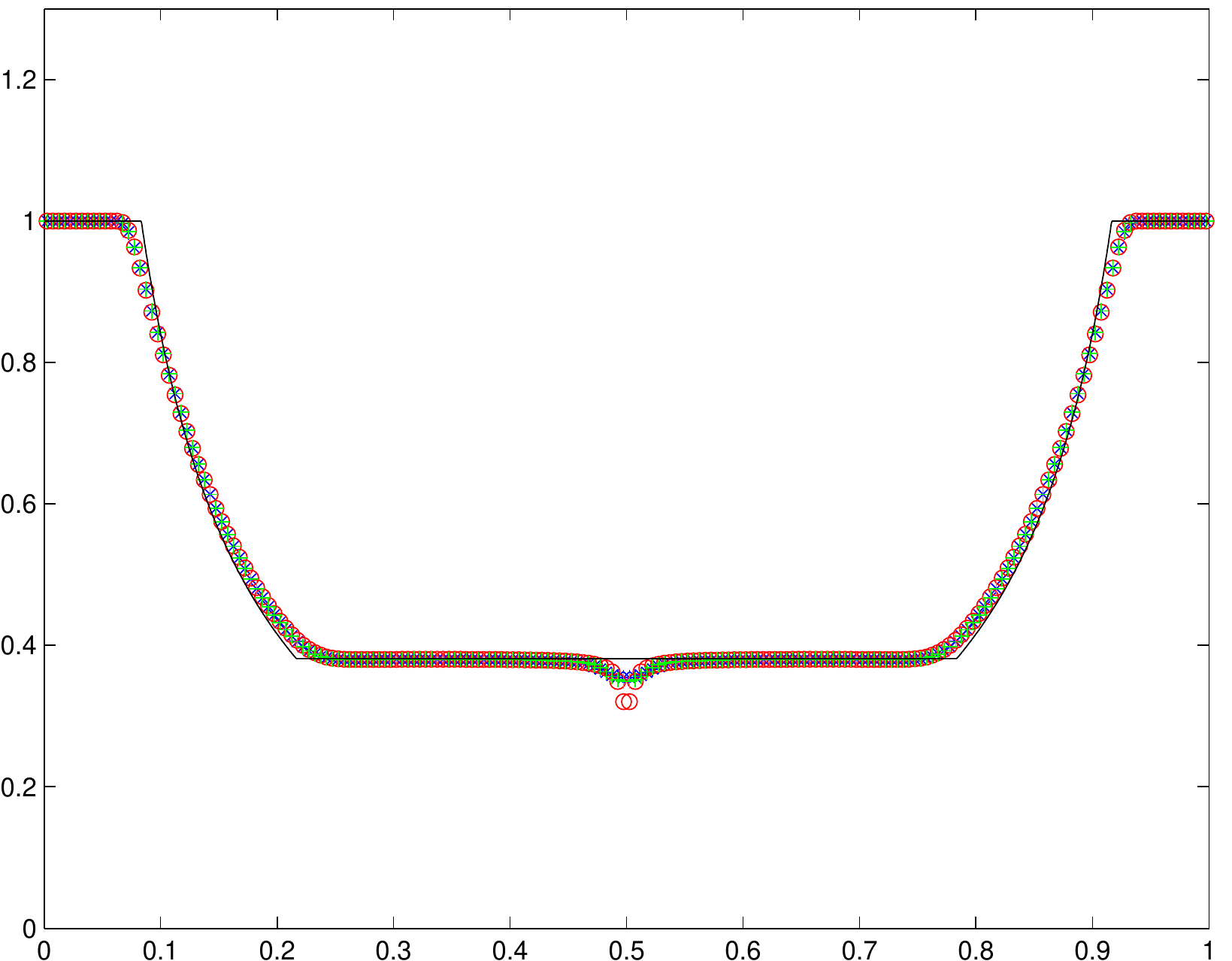}
 }
\subfigure[close-up of $\rho$]{
   \includegraphics[width=0.45\textwidth]{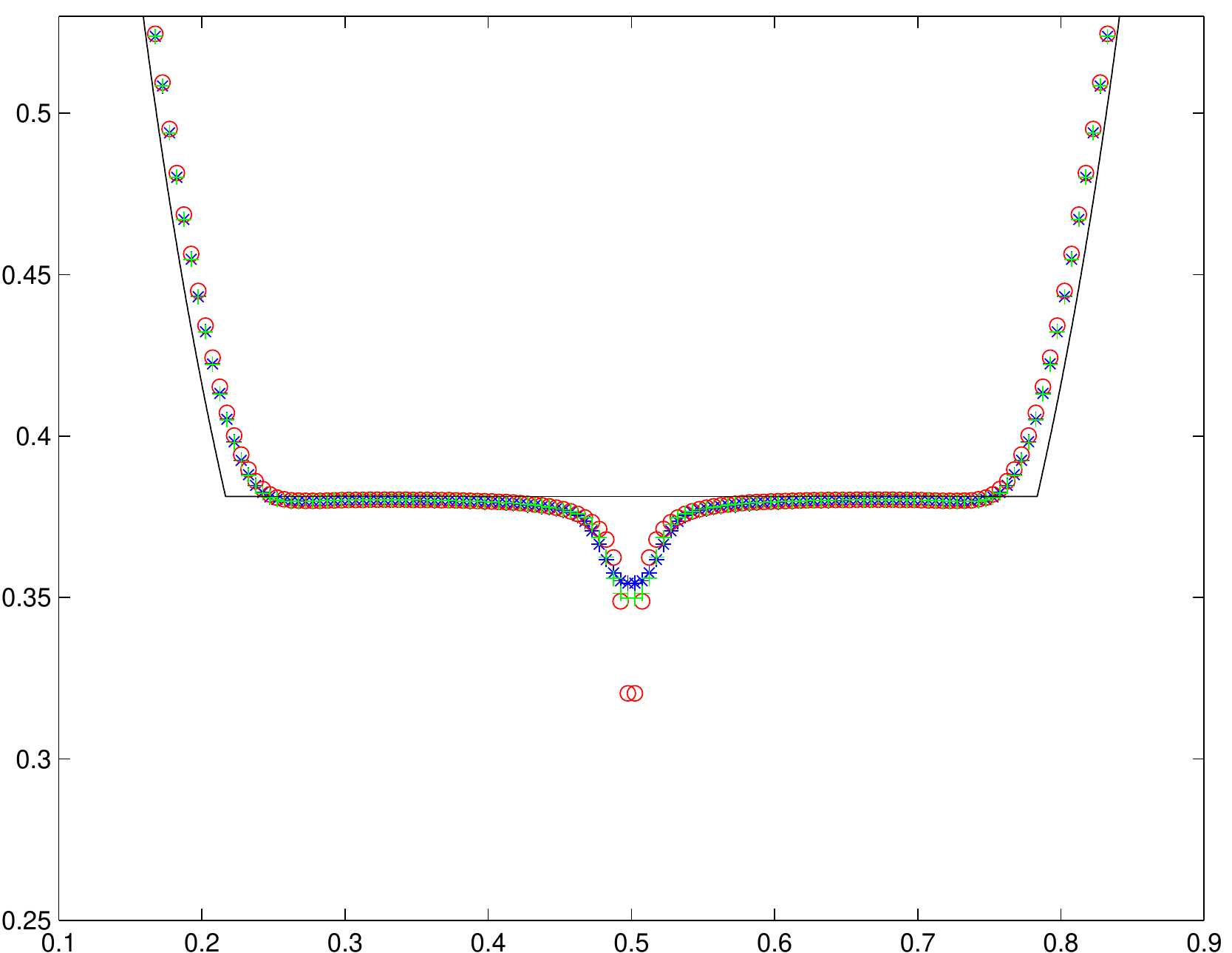}
 }
 \subfigure[$u_1$]{
   \includegraphics[width=0.45\textwidth]{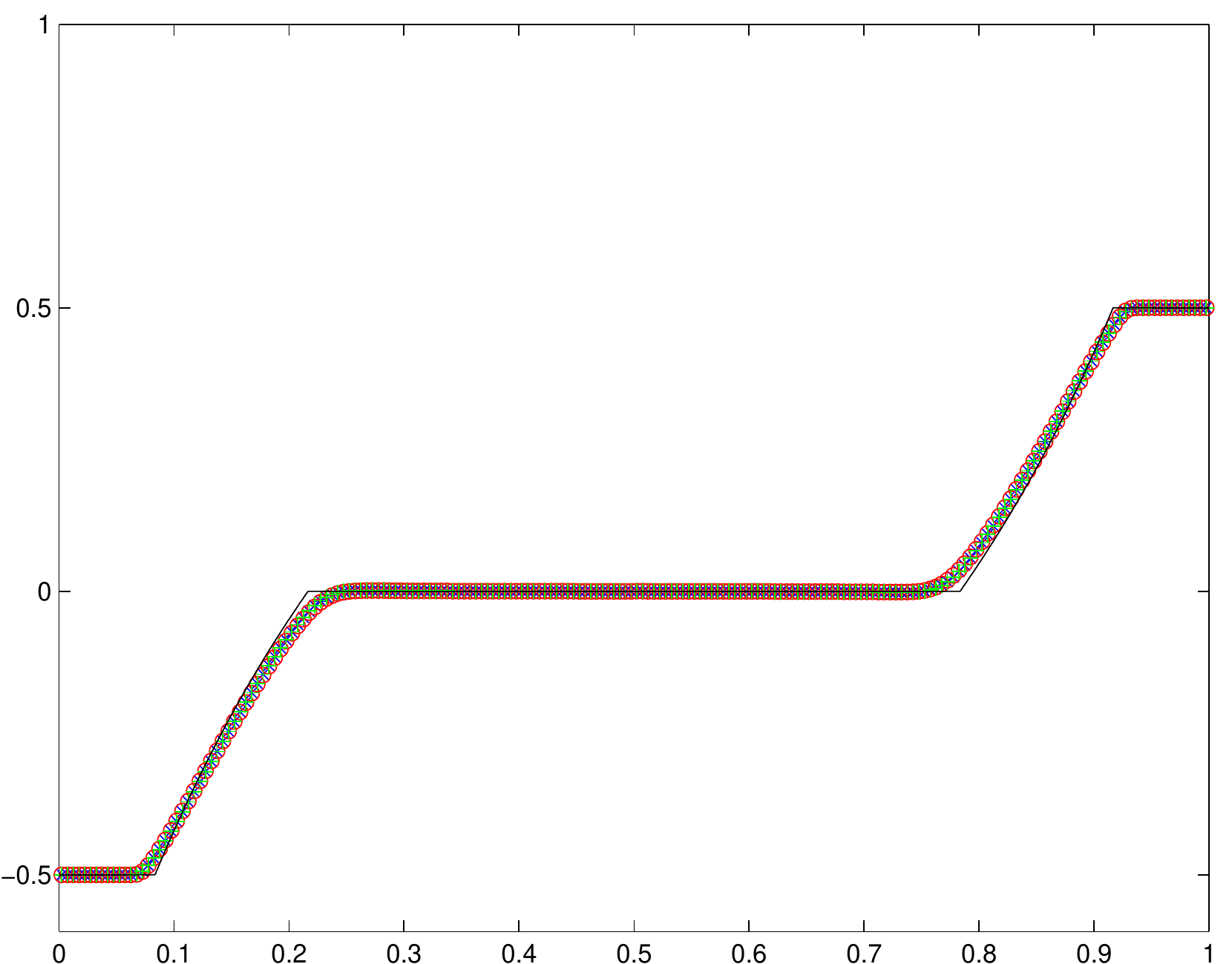}
 }
 \subfigure[$p$]{
   \includegraphics[width=0.45\textwidth]{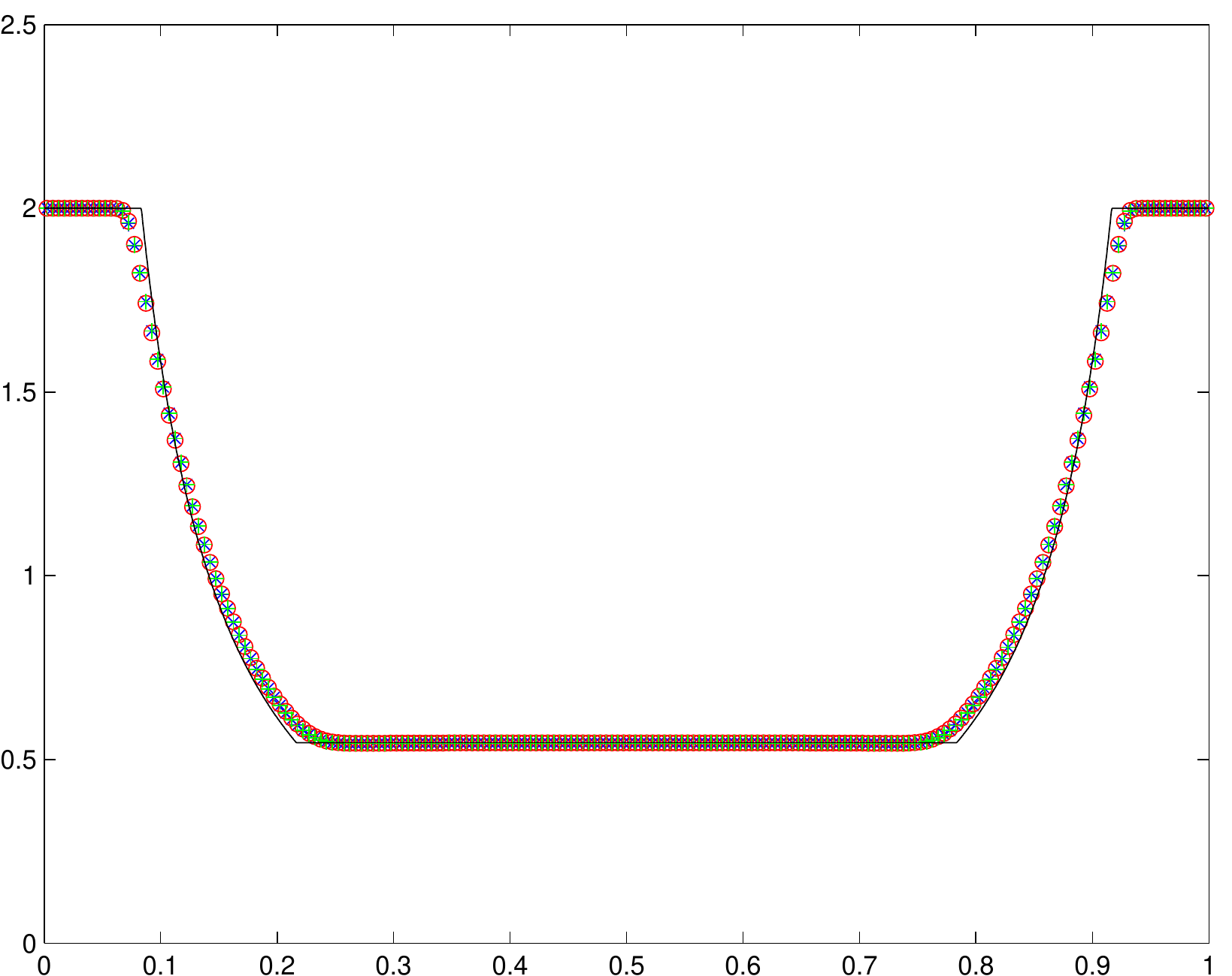}
 }
 \caption{Example \ref{ex:1DRP3}: The solutions 
 at $t=0.5$ obtained by the BGK scheme (``{$\circ$}"), the KFVS scheme (``{$*$}") and the BGK-type scheme (``{$+$}") with 200 uniform cells.}
 \label{fig:1DRP3}
\end{figure}

\begin{figure}[h]
 \centering
 \subfigure[$ \rho $]{
   \includegraphics[width=0.45\textwidth]{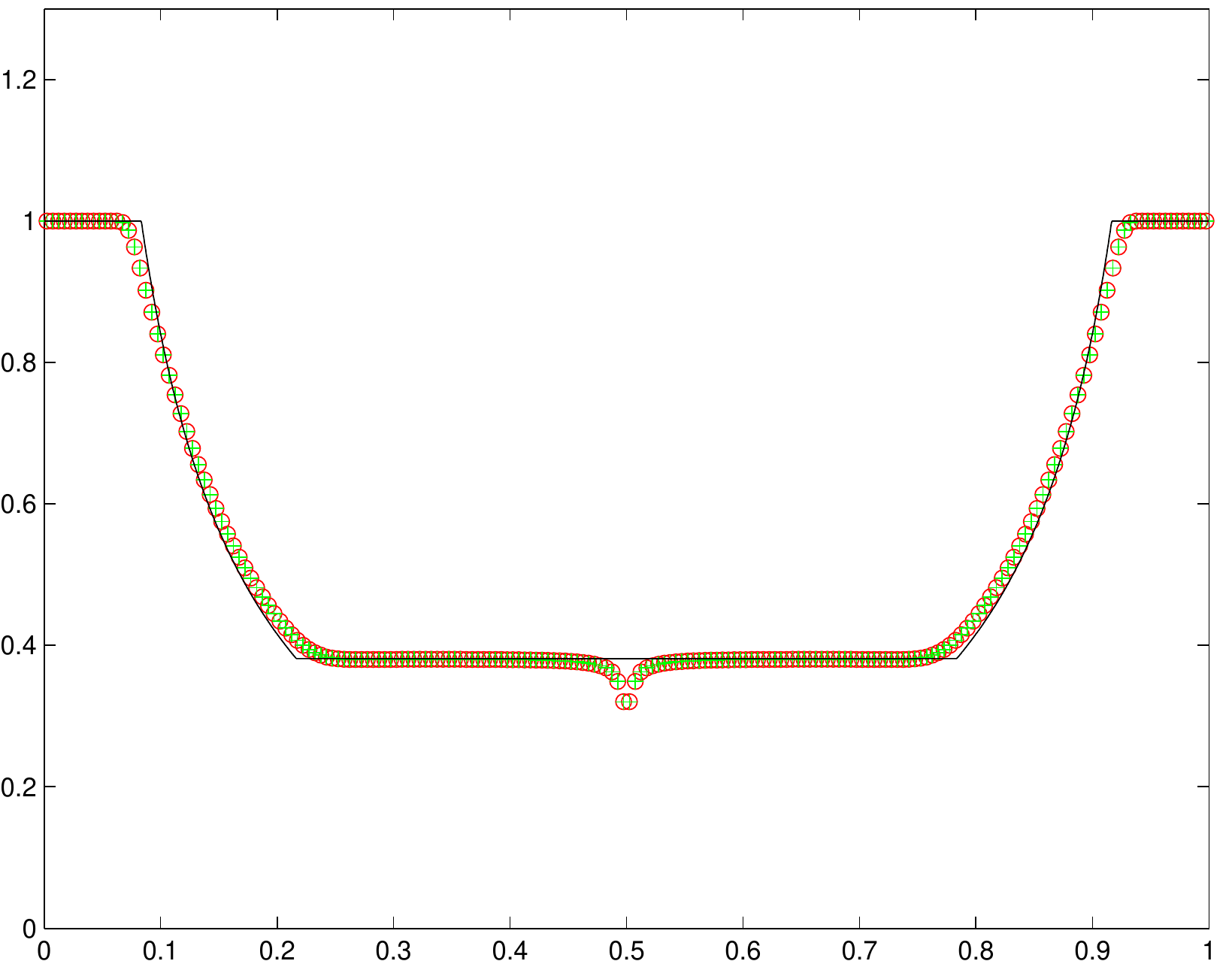}
 }
\subfigure[close-up of $\rho$]{
   \includegraphics[width=0.45\textwidth]{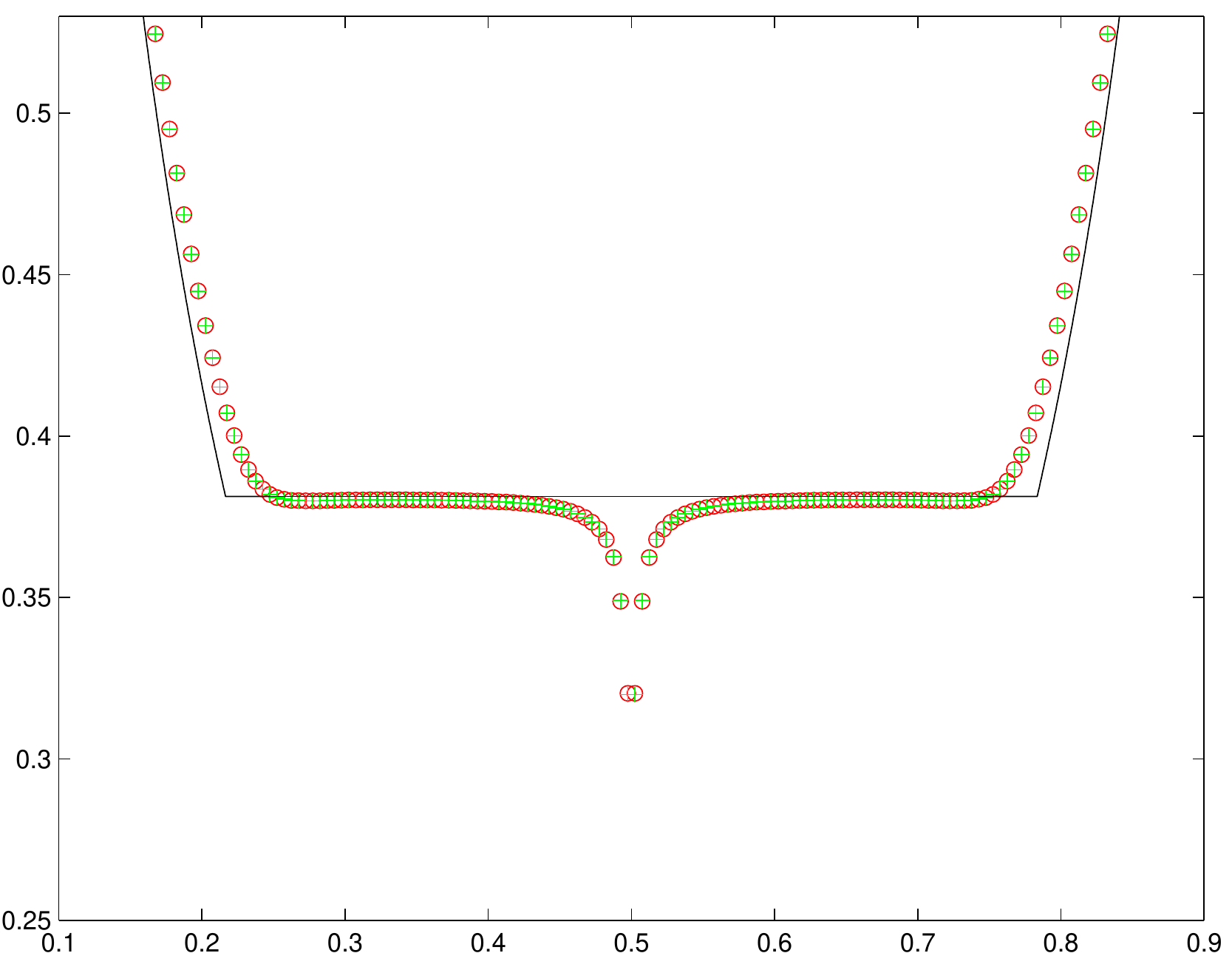}
 }
 \subfigure[$u_1$]{
   \includegraphics[width=0.45\textwidth]{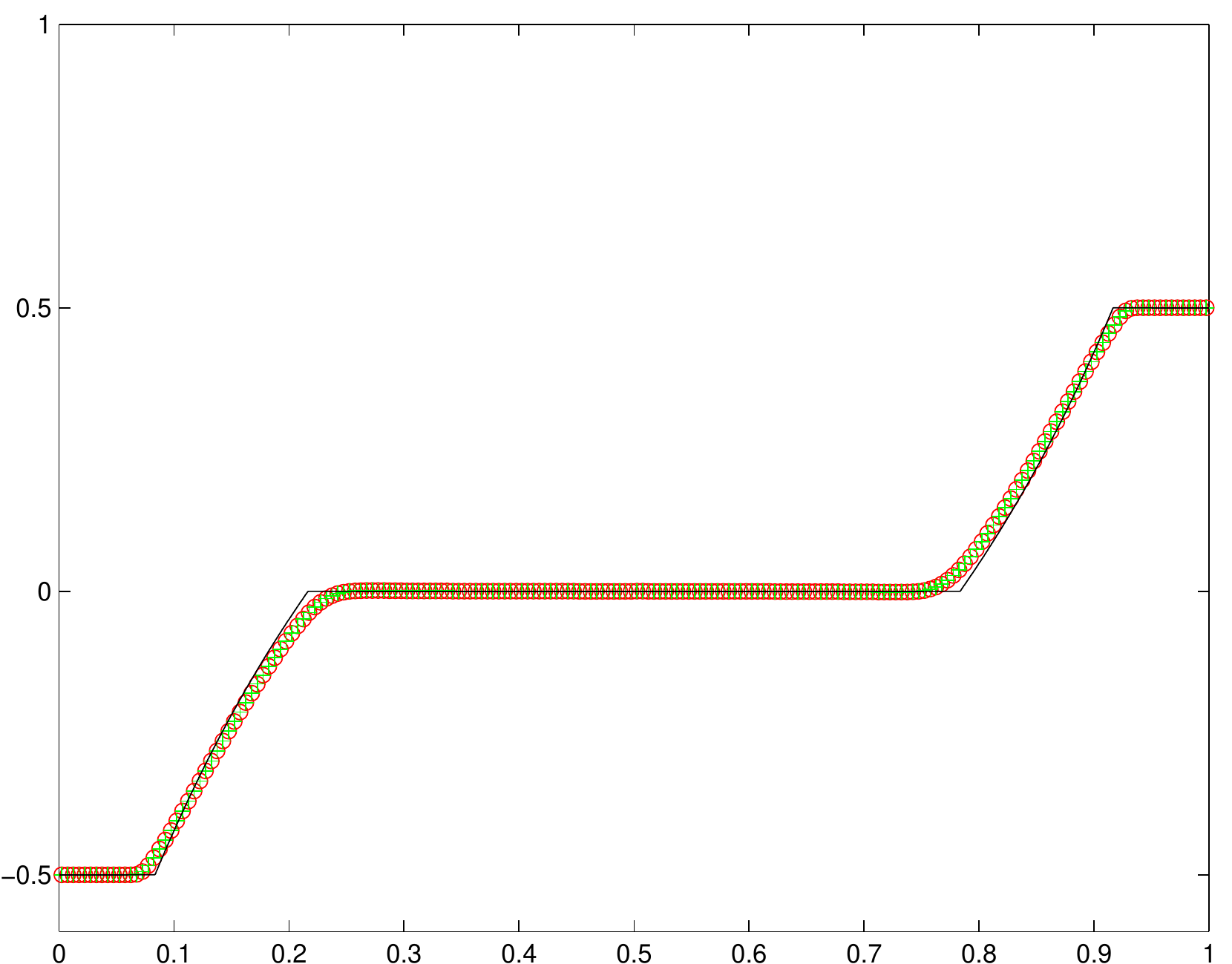}
 }
 \subfigure[$p$]{
   \includegraphics[width=0.45\textwidth]{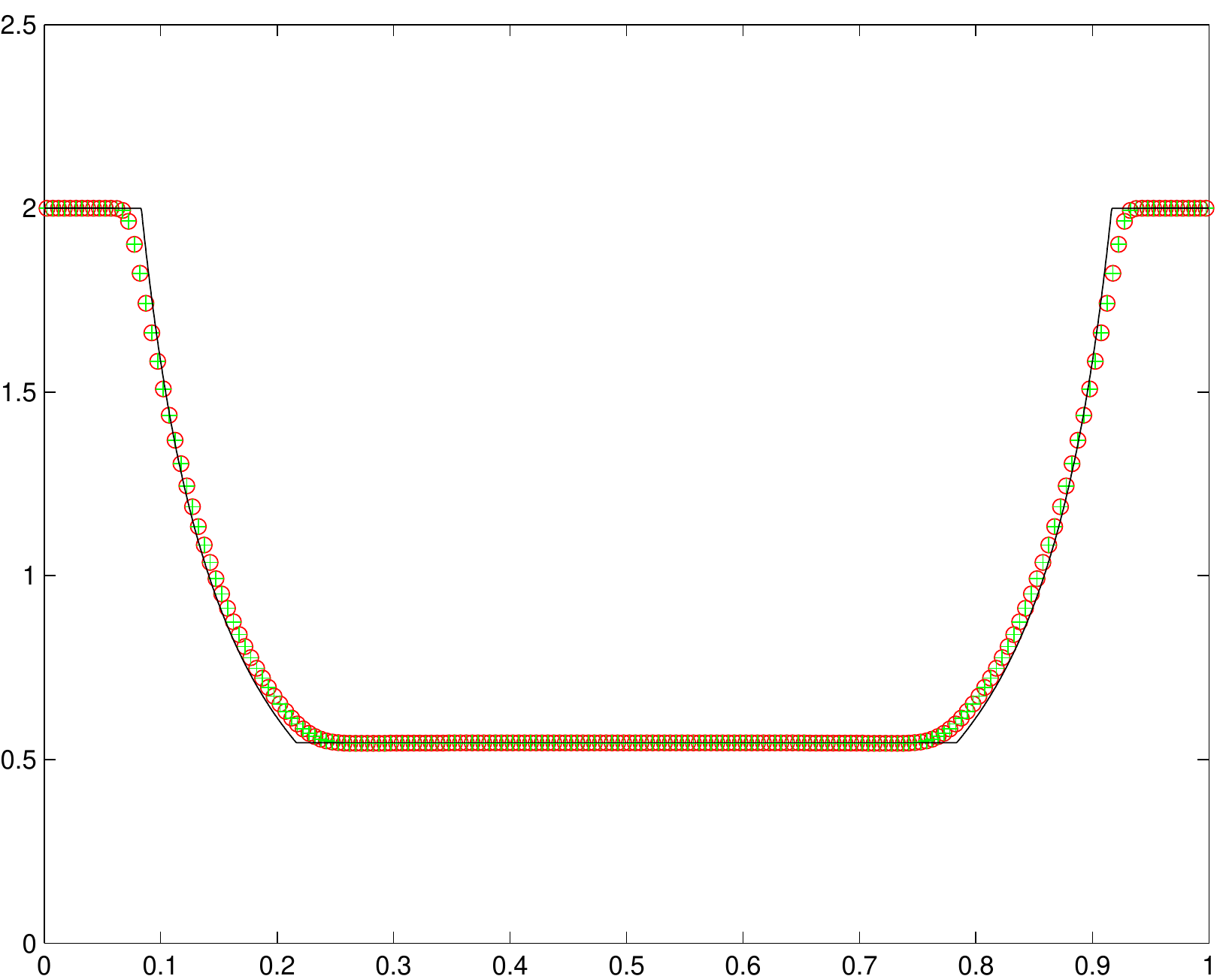}
 }
 \caption{Example \ref{ex:1DRP3}: The solutions 
 at $t=0.5$ obtained by the BGK scheme (``{$\circ$}") and the sBGK scheme (``{$+$}") with 200 uniform cells.}
 \label{fig22:1DRP3}
 \end{figure}

\begin{example}[Riemann problem II]\rm \label{ex:1DRP1} 
 The initial data are given by
 \begin{equation*}
   \label{exeq:1DRP1}
  (\rho,u_1,p)(x,0)=
   \begin{cases}
     (1.0,0.6,3.0),& x<0.5,\\
     (1.0,-0.5,2.0),& x>0.5.
   \end{cases}
 \end{equation*}
\end{example}
As the time increases, the initial discontinuity will be decomposed into a left-moving shock wave,  a right-moving contact discontinuity,
and a right-moving shock wave. Fig. \ref{fig:1DRP1}
displays the numerical results at $t=0.5$ by using
our BGK scheme (``{$\circ$}"), the KFVS scheme (``{$*$}") and the BGK-type scheme (``{$+$}")
with 400 uniform cells, where the solid line
denotes the exact solution.
It can be seen that the BGK scheme resolves the contact discontinuity better than the second-order accurate BGK-type and
KFVS schemes,
and they can well capture other waves. The comparison between
 the BGK and sBGK schemes in Fig. \ref{fig2:1DRP1} shows that the sBGK scheme exhibits almost the same resolution of the wave configuration as the BGK scheme.

\begin{figure}[h]
 \centering
 \subfigure[$ \rho $]{
  \includegraphics[width=0.45\textwidth]{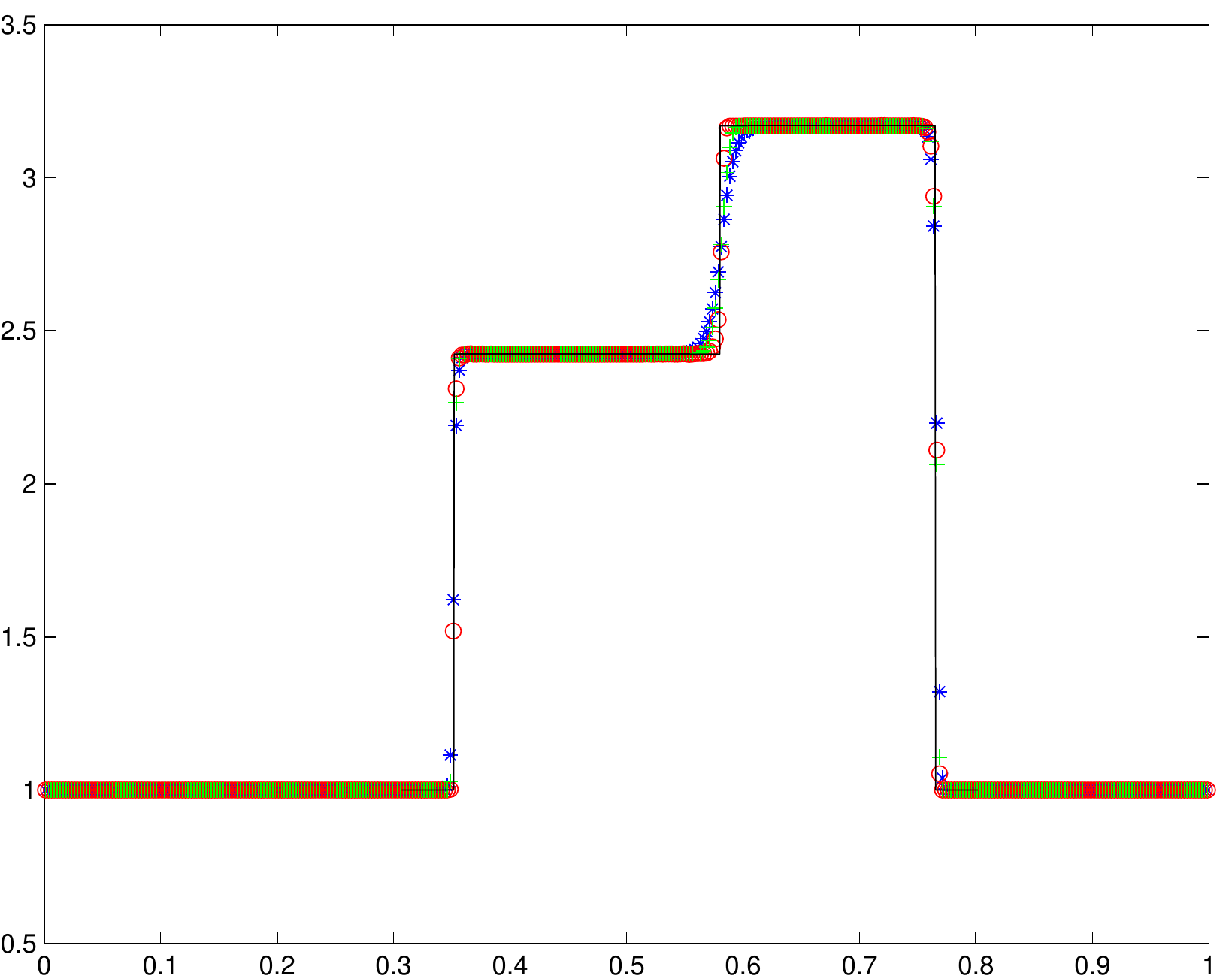}
 }
 \subfigure[close-up of $\rho$]{
  \includegraphics[width=0.45\textwidth]{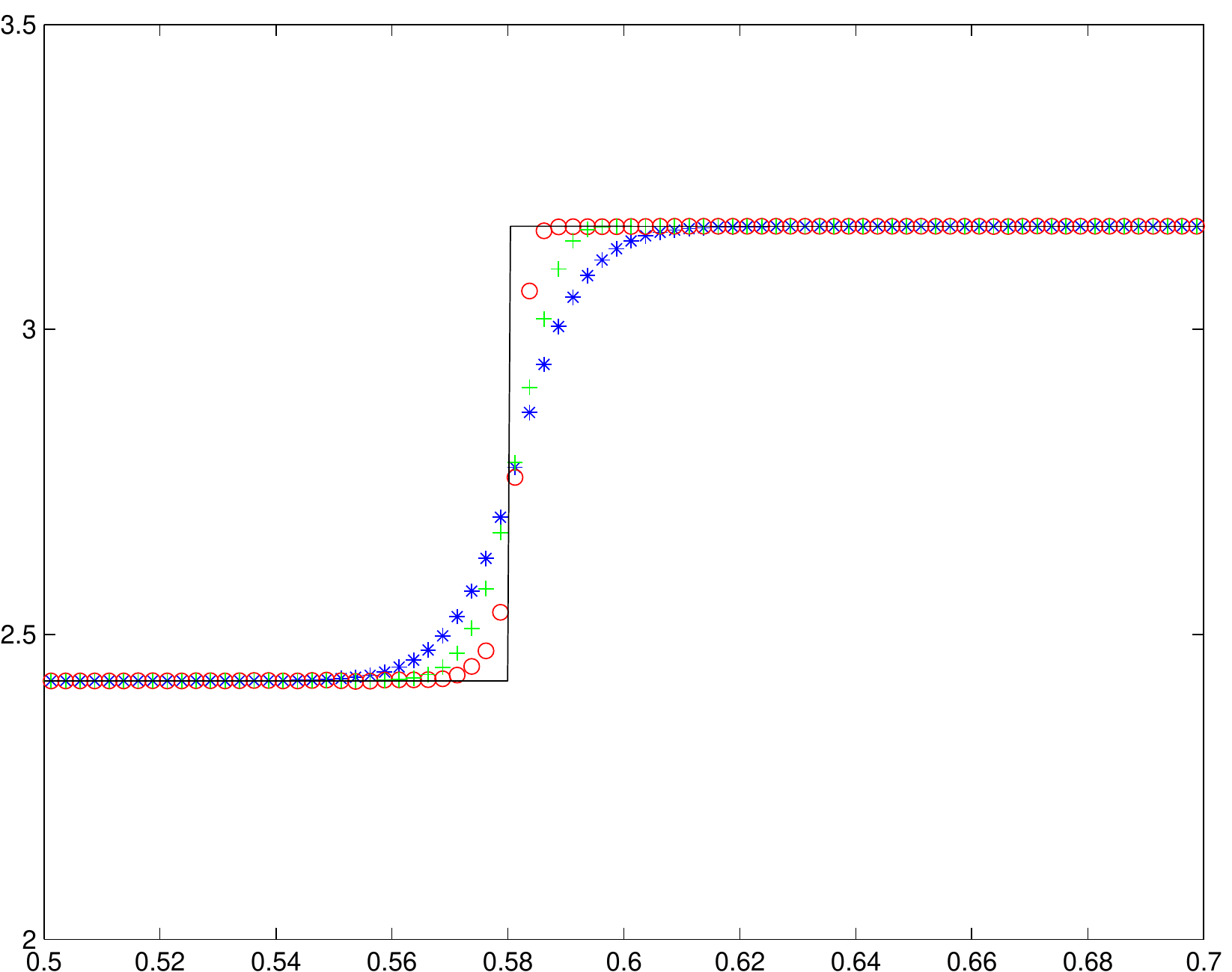}
 }
 \subfigure[$u_1$]{
  \includegraphics[width=0.45\textwidth]{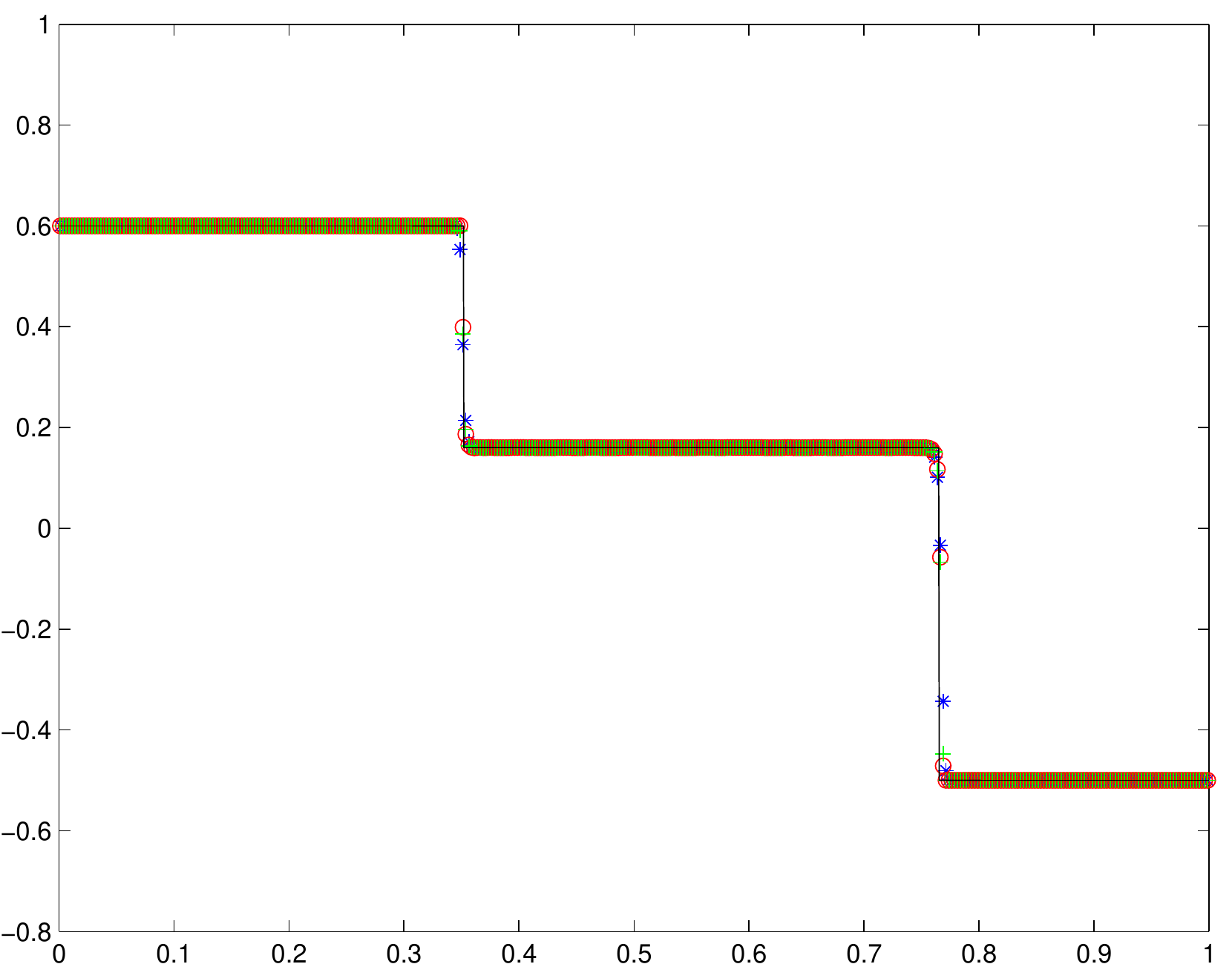}
 }
 \subfigure[$p$]{
  \includegraphics[width=0.45\textwidth]{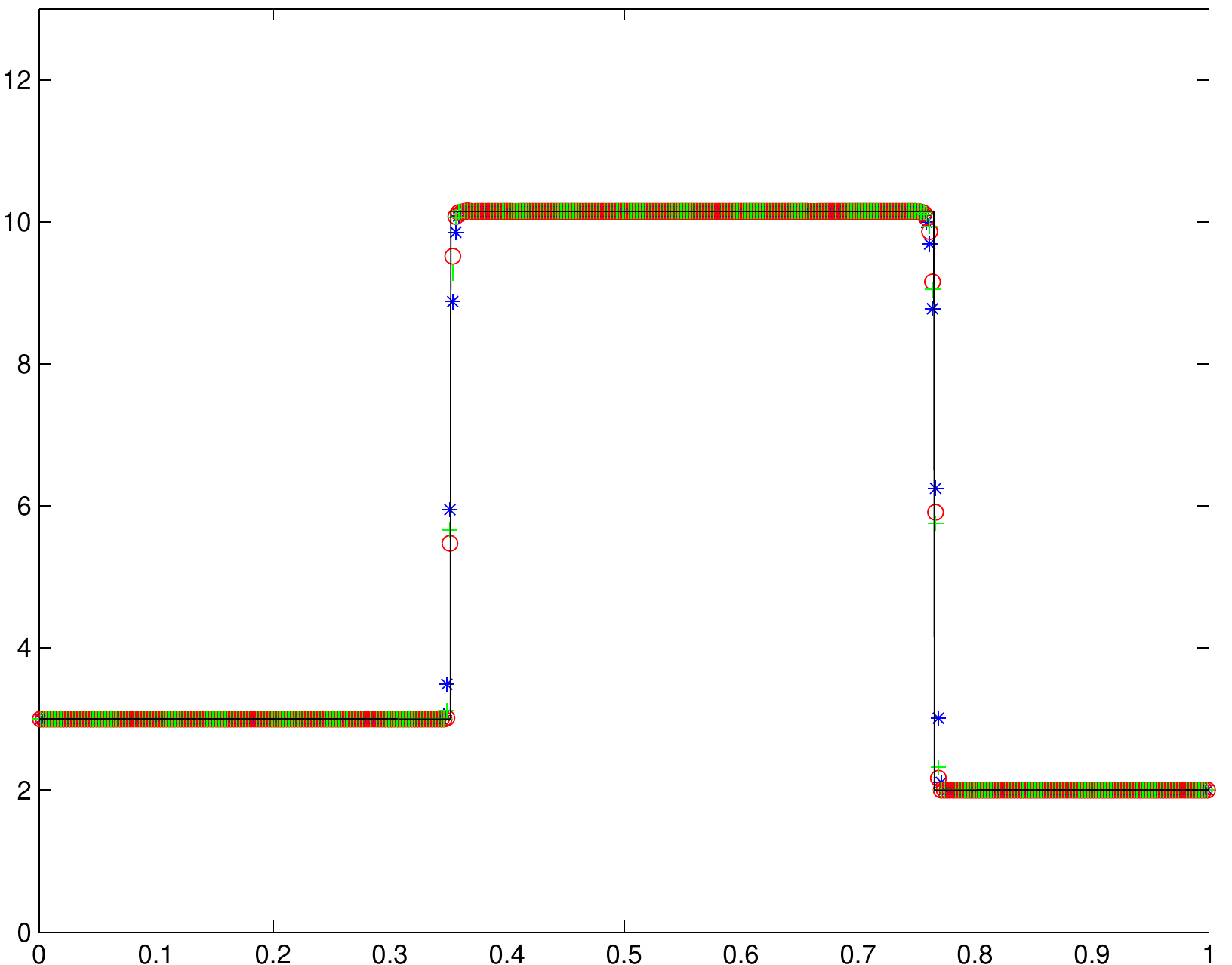}
 }
 \caption{Example \ref{ex:1DRP1}: The solutions {at} $t=0.5$  obtained by the BGK scheme (``{$\circ$}"),
 the KFVS scheme (``{$*$}") and the BGK-type scheme (``{$+$}") with 400 uniform cells.}
 \label{fig:1DRP1}
\end{figure}

\begin{figure}[h]
 \centering
 \subfigure[$ \rho $]{
  \includegraphics[width=0.45\textwidth]{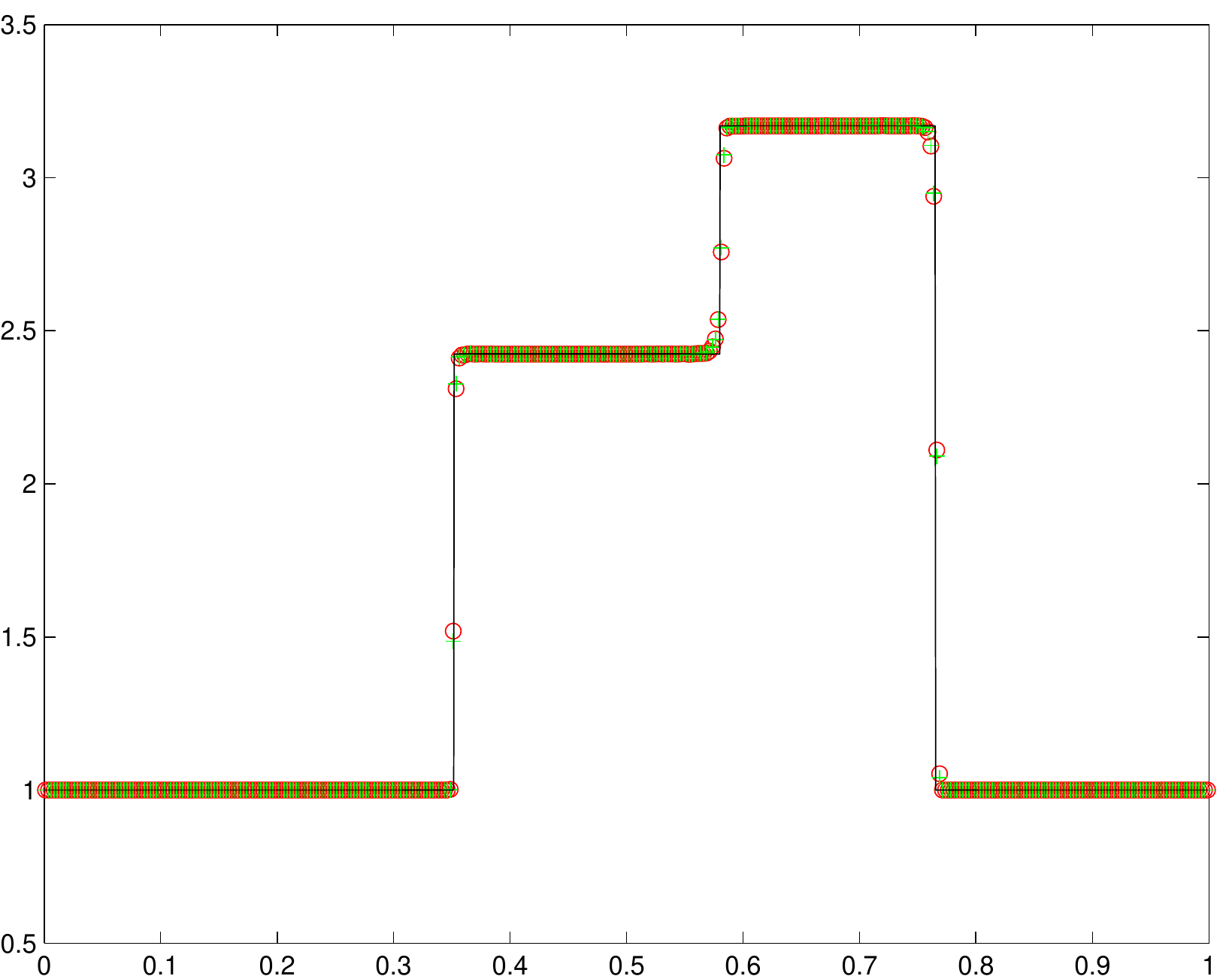}
 }
 \subfigure[close-up of $\rho$]{
  \includegraphics[width=0.45\textwidth]{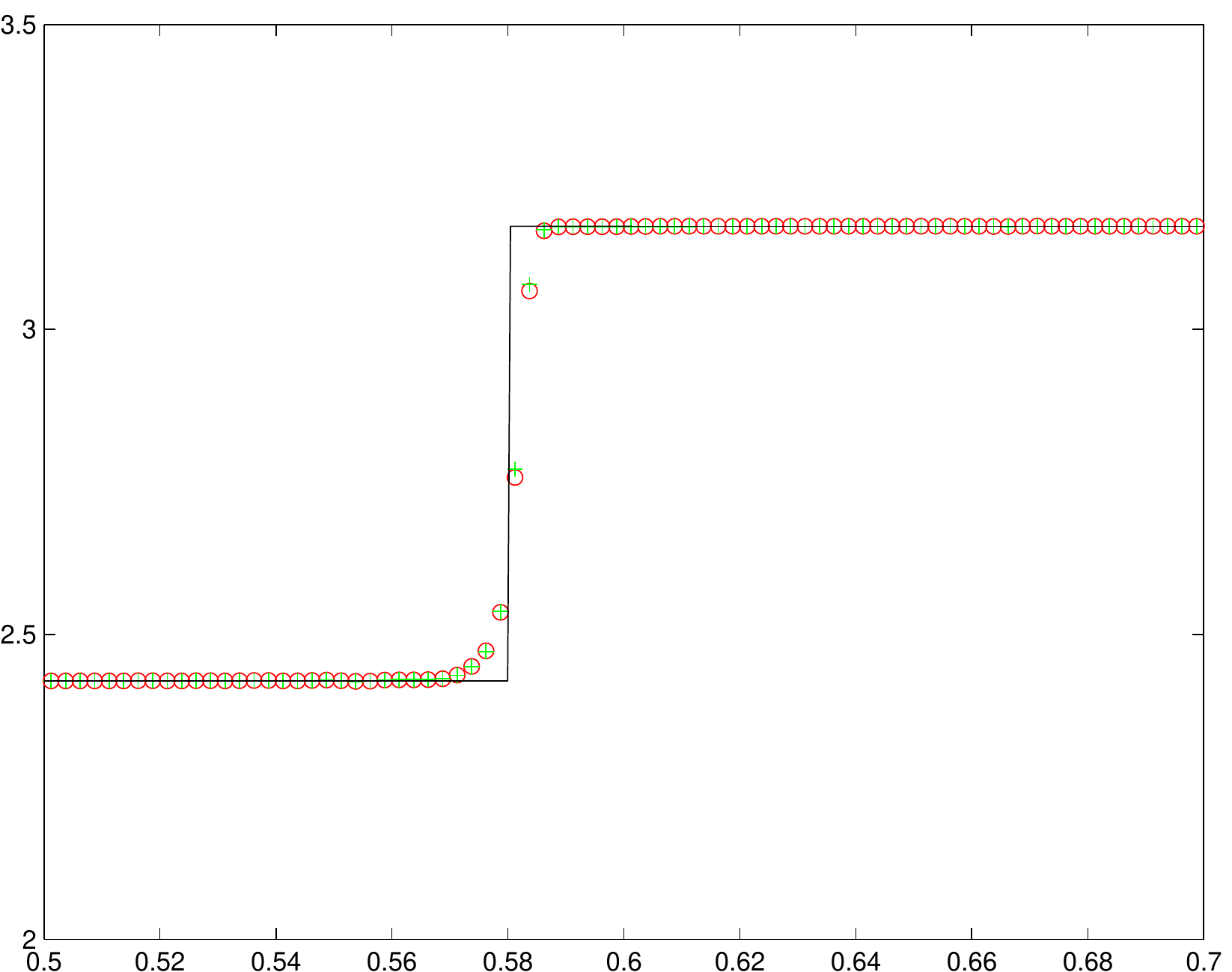}
 }
 \subfigure[$u_1$]{
  \includegraphics[width=0.45\textwidth]{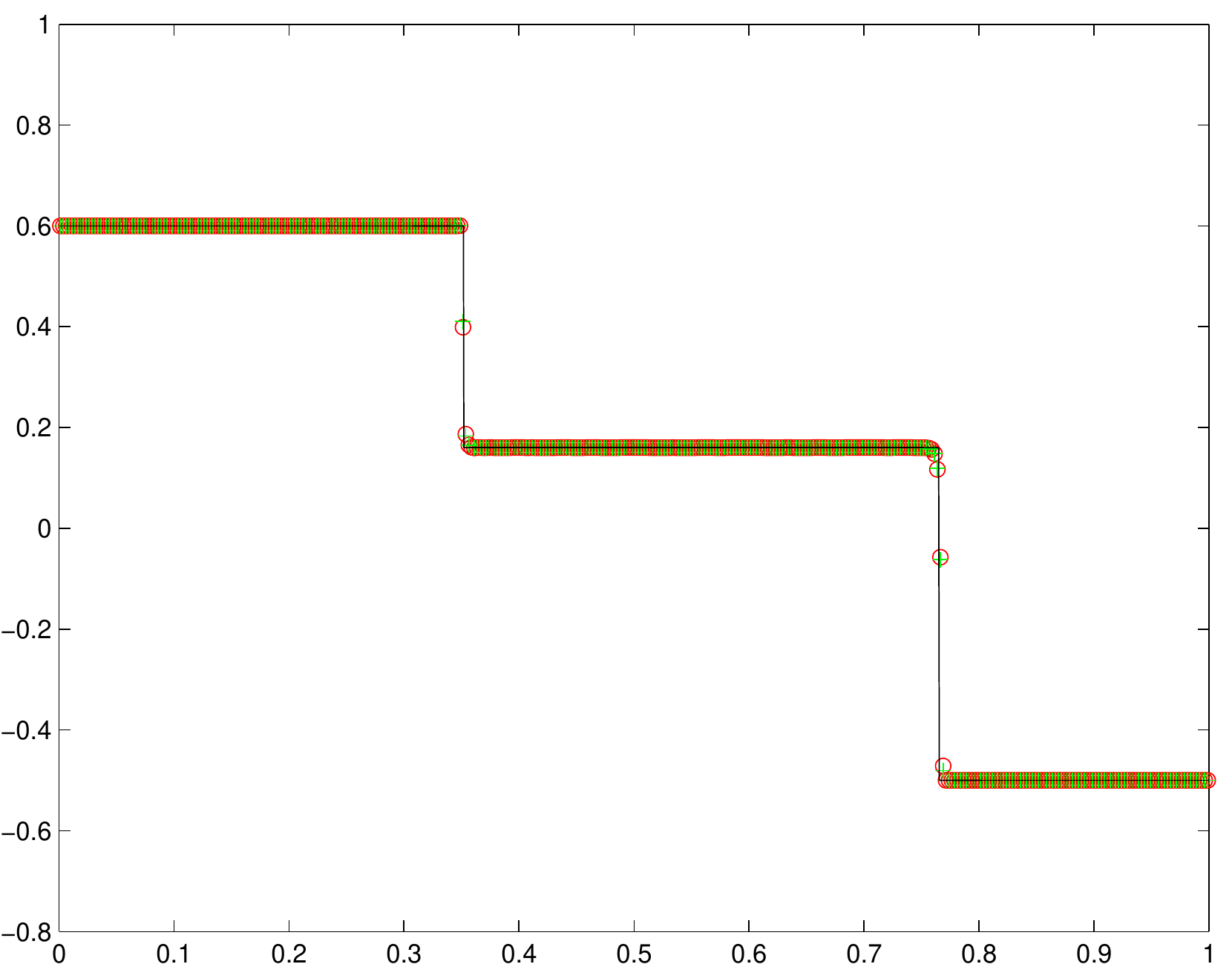}
 }
 \subfigure[$p$]{
  \includegraphics[width=0.45\textwidth]{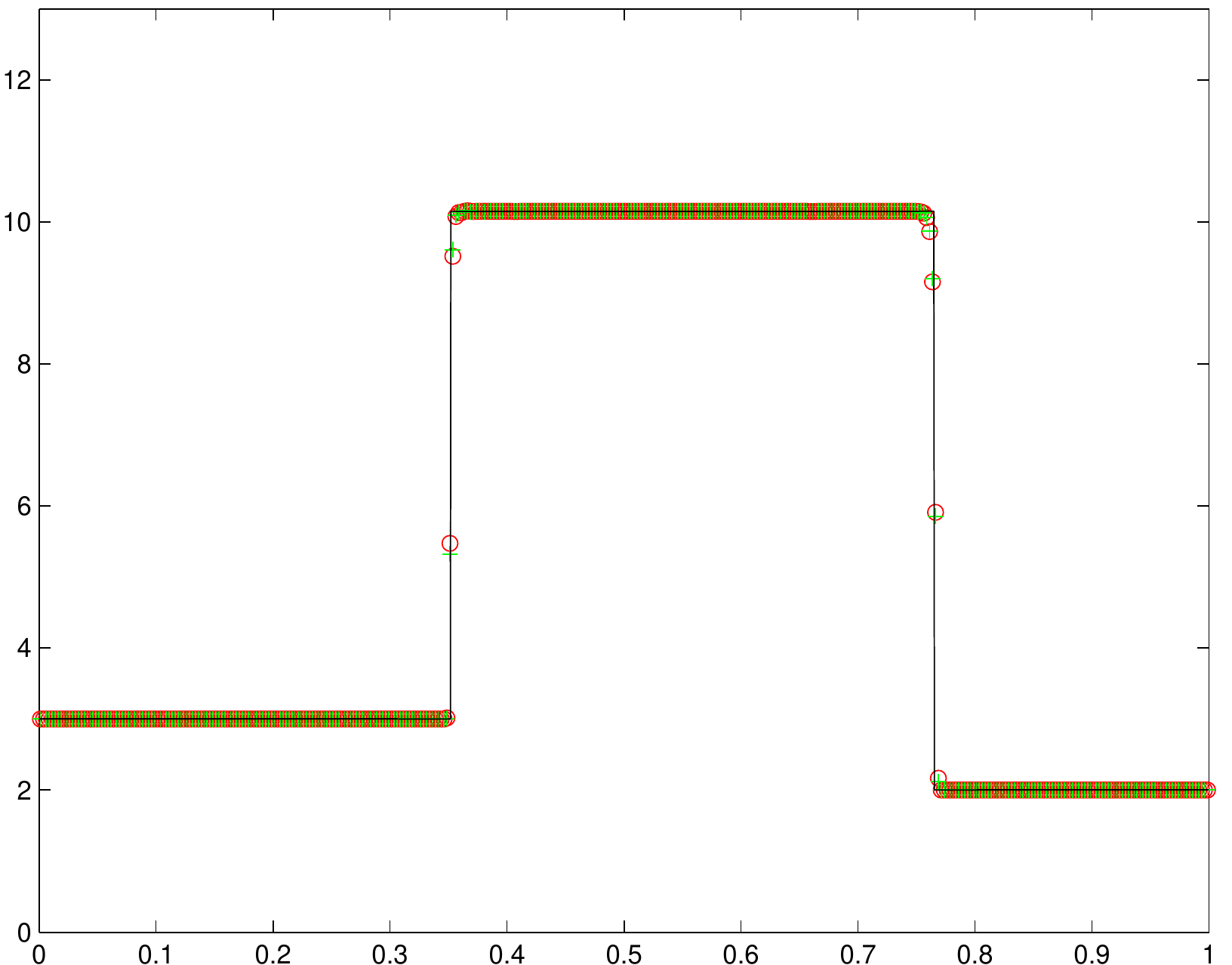}
 }
 \caption{Example \ref{ex:1DRP1}: The solutions {at} $t=0.5$  obtained by the BGK scheme (``{$\circ$}") and the sBGK scheme (``{$+$}") with 400 uniform cells.}
 \label{fig2:1DRP1}
\end{figure}

\clearpage
\begin{example}[Riemann problem III]\label{ex:1DRP2}\rm
 The initial conditions of this Riemann problem are
 \begin{equation*}
   \label{exeq:1DRP2}
  (\rho,u_1,p)(x,0)=
   \begin{cases}
     (5.0,0.0,10.0),& x<0.5,\\
     (1.0,0.0,0.5),& x>0.5.
   \end{cases}
 \end{equation*}
\end{example}
\begin{figure}[h]
\centering
\subfigure[$ \rho $]{
 \includegraphics[width=0.45\textwidth]{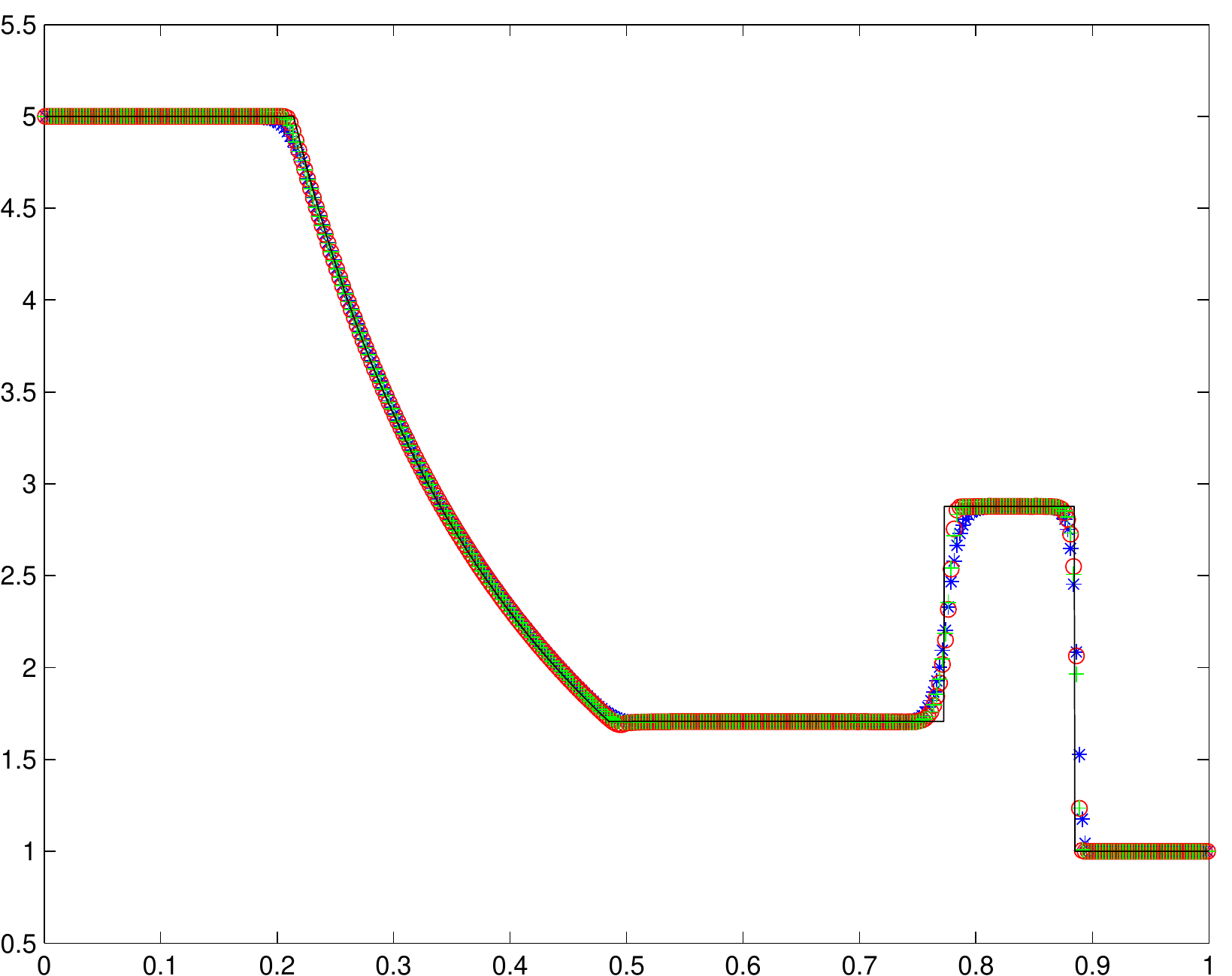}
}
\subfigure[close-up of $\rho$]{
 \includegraphics[width=0.45\textwidth]{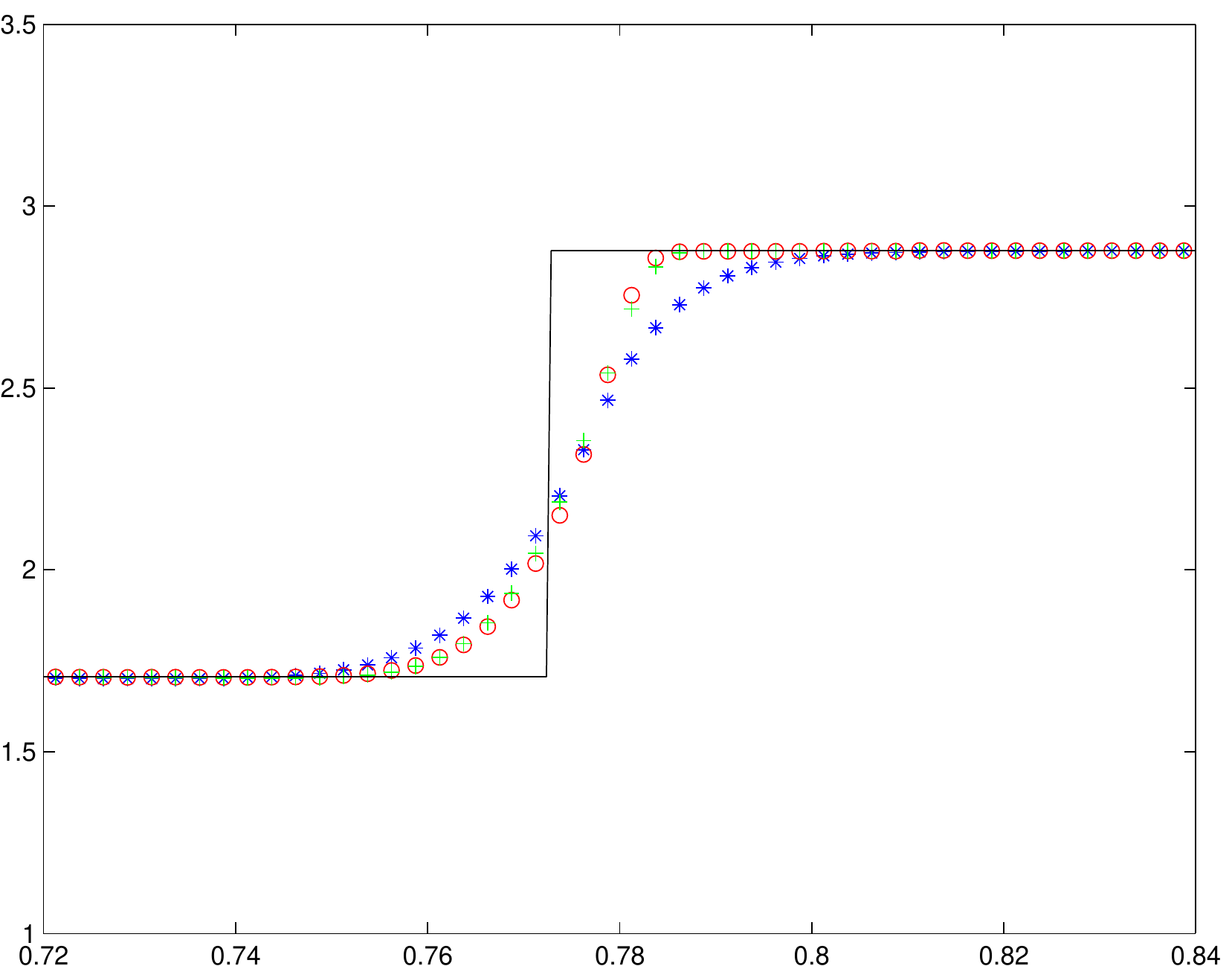}
}
\subfigure[$u_1$]{
 \includegraphics[width=0.45\textwidth]{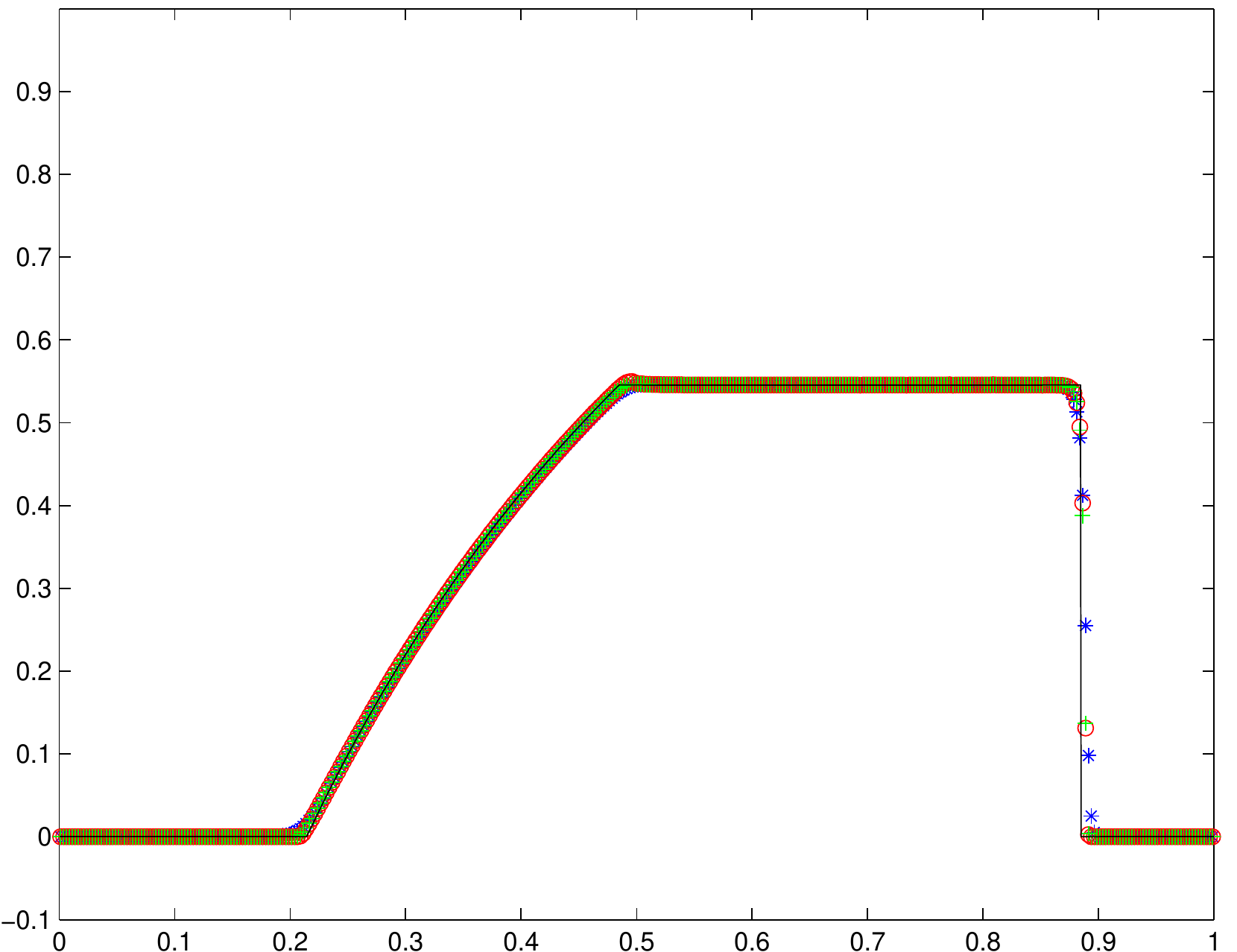}
}
\subfigure[$p$]{
 \includegraphics[width=0.45\textwidth]{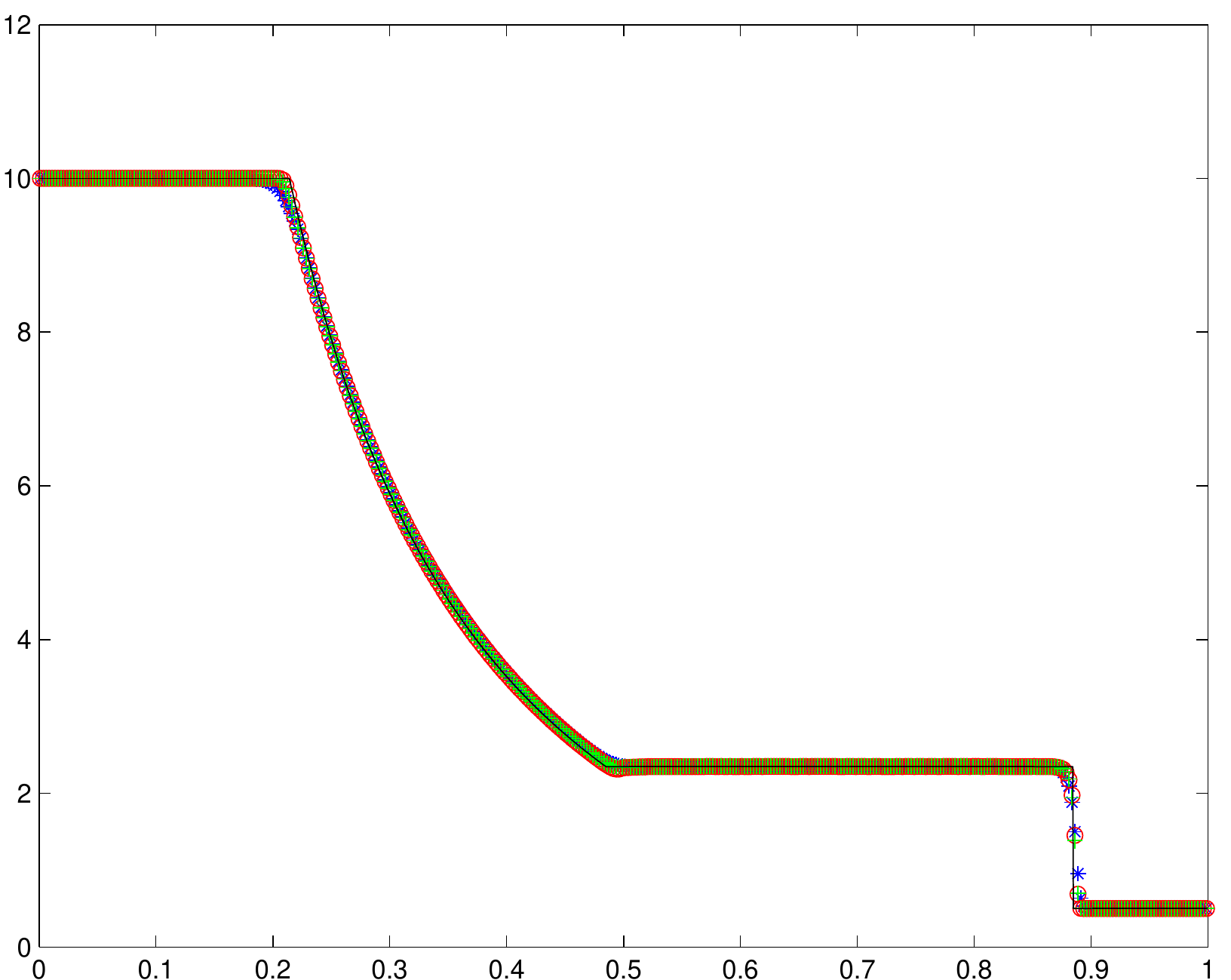}
}

\caption{Example \ref{ex:1DRP2}: The solutions at $t=0.5$ obtained by
using the BGK scheme (``{$\circ$}"), the KFVS scheme (``{$*$}") and the BGK-type scheme (``{$+$}") with 400 uniform cells.}
\label{fig:1DRP2}
\end{figure}

\begin{figure}[h]
\centering
\subfigure[$ \rho $]{
 \includegraphics[width=0.45\textwidth]{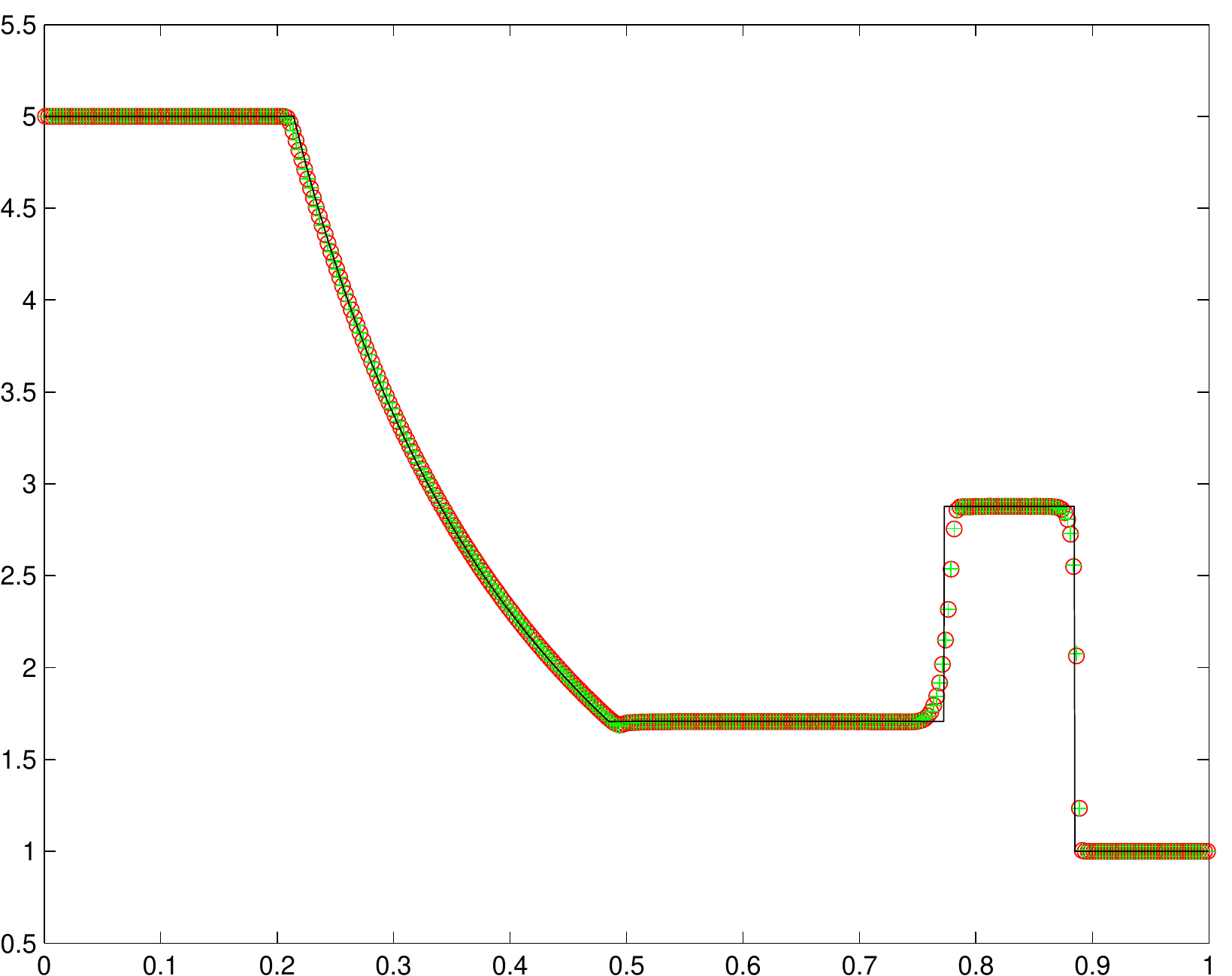}
}
\subfigure[close-up of $\rho$]{
 \includegraphics[width=0.45\textwidth]{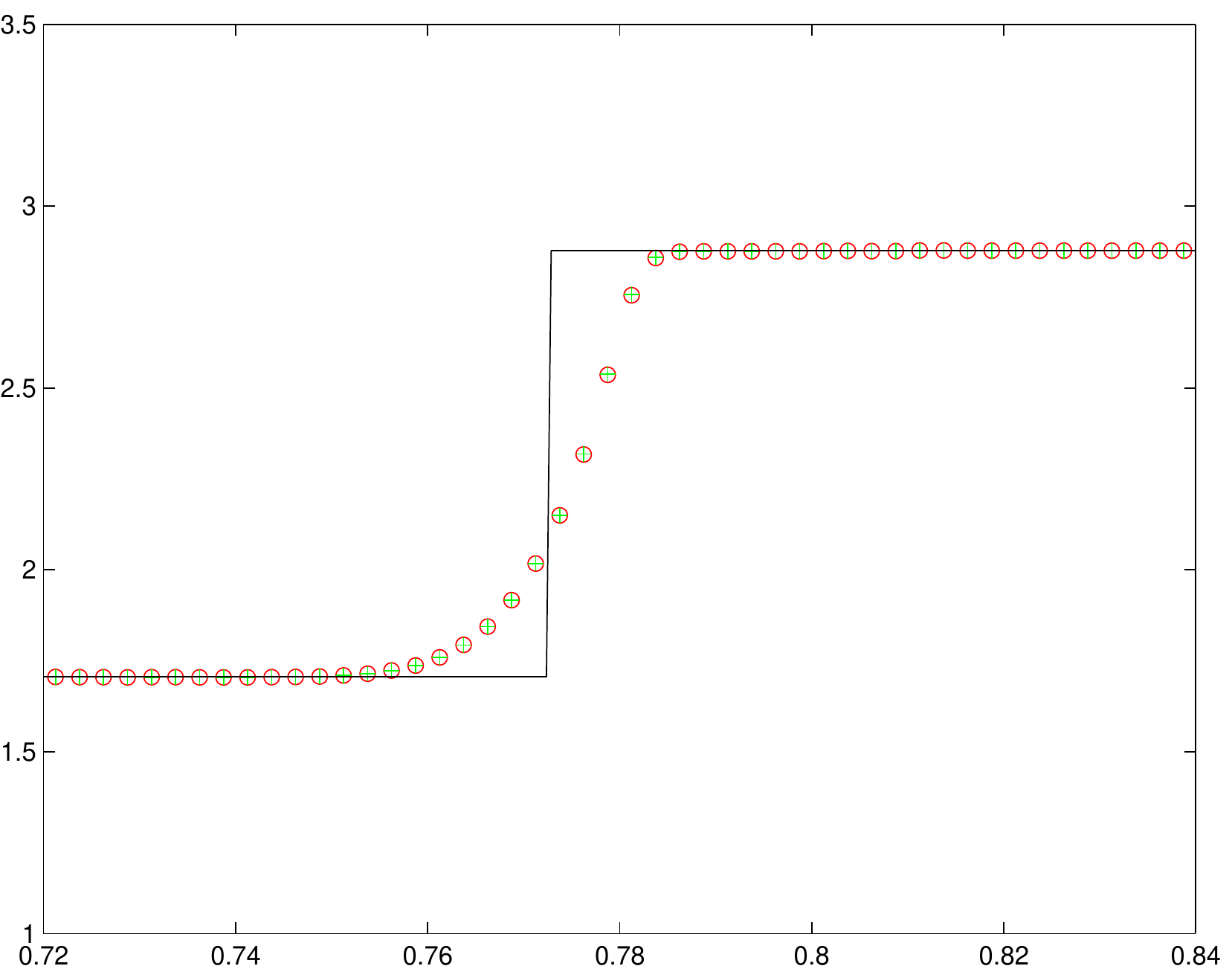}
}
\subfigure[$u_1$]{
 \includegraphics[width=0.45\textwidth]{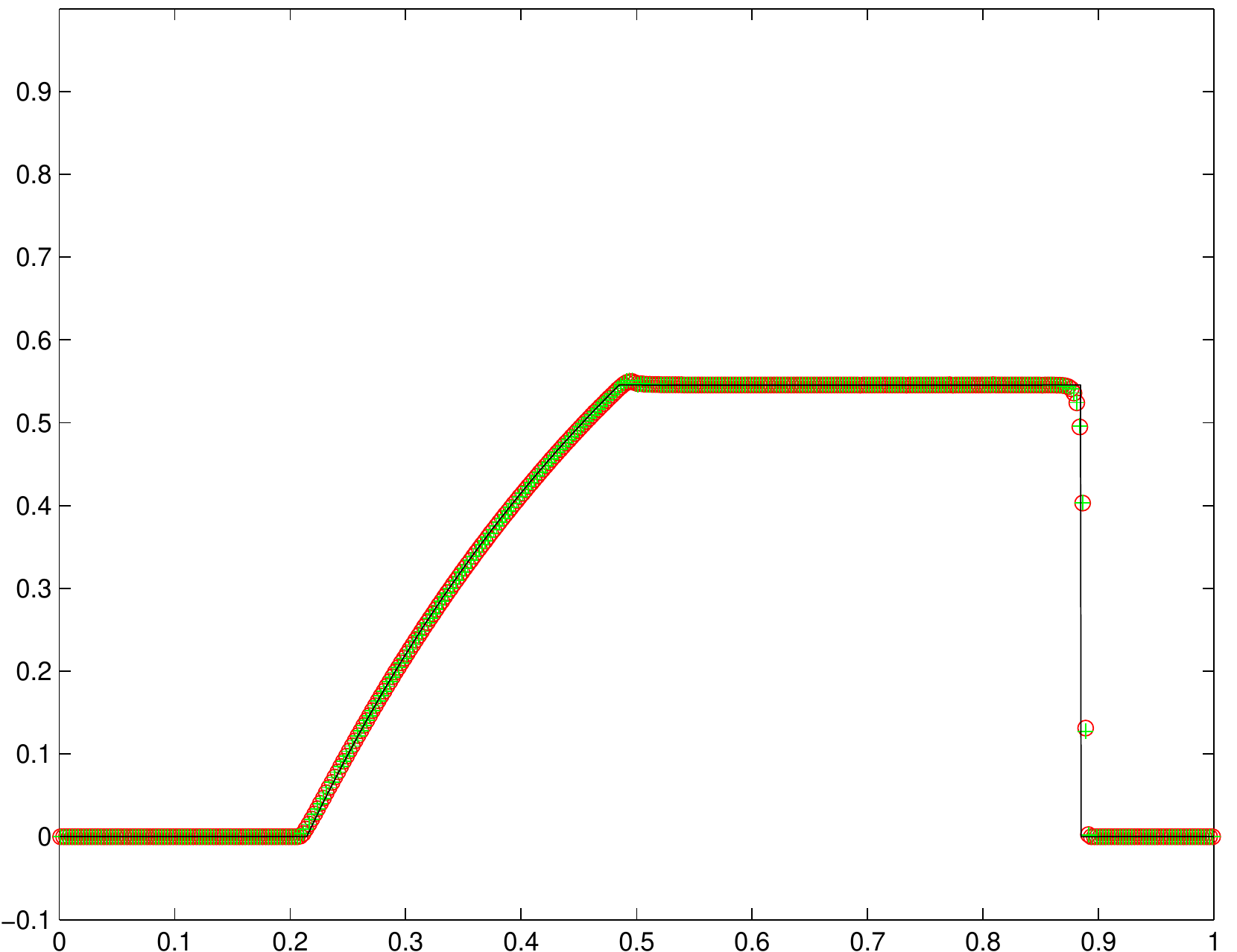}
}
\subfigure[$p$]{
 \includegraphics[width=0.45\textwidth]{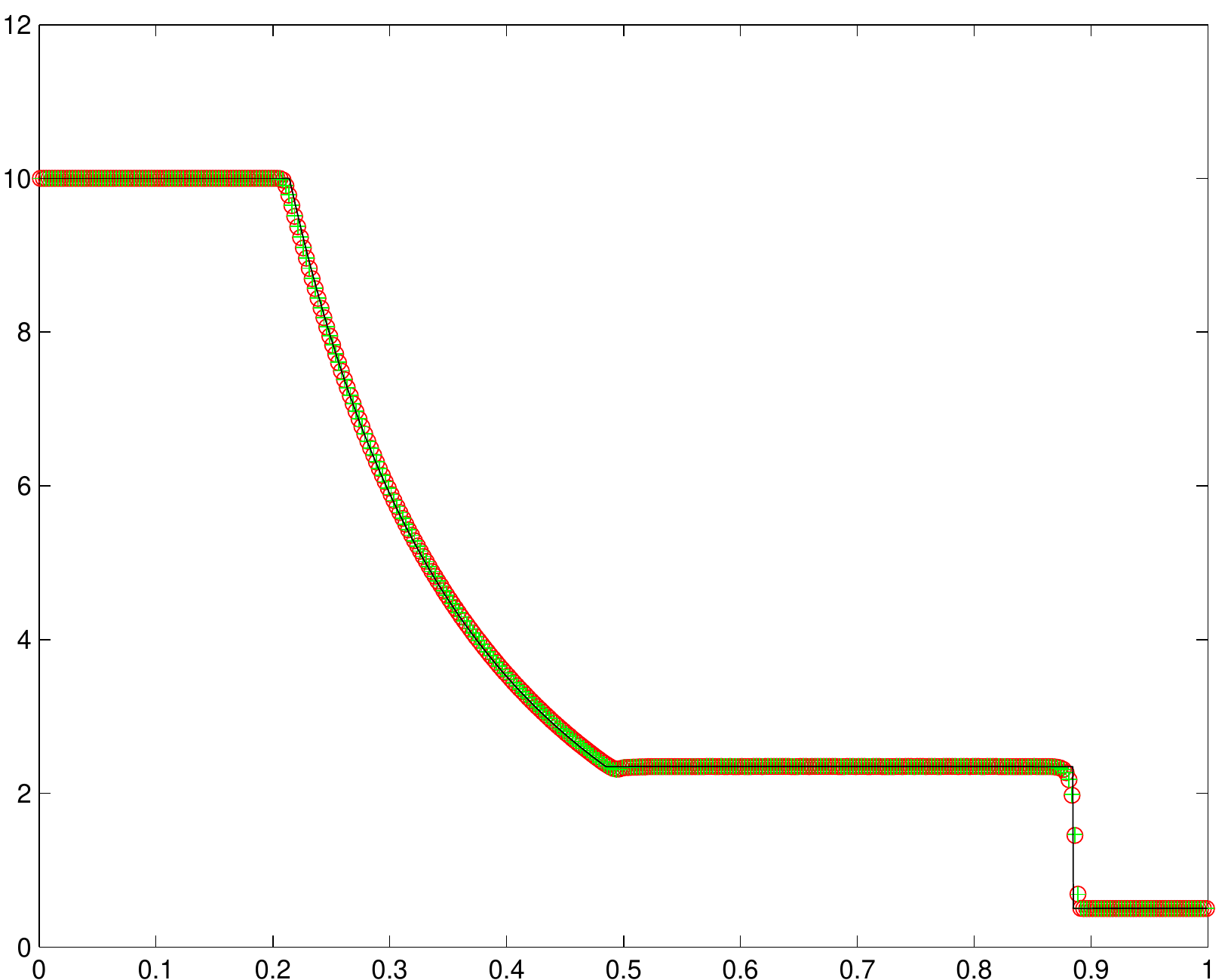}
}
\caption{Example \ref{ex:1DRP2}: The solutions at $t=0.5$ obtained by
 the BGK scheme (``{$\circ$}") and the sBGK scheme (``{$+$}") with 400 uniform cells.}
\label{fig22:1DRP2}
\end{figure}

Fig. \ref{fig:1DRP2}
shows the numerical solutions {at} $t=0.5$  obtained by the BGK scheme (``{$\circ$}"), the KFVS scheme (``{$*$}") and the BGK-type scheme ("{$+$}") with 400 uniform cells, where the solid line denotes the exact solution.
In this case, as the time increases, the initial discontinuity at $x=0.5$ is evolved into a left-moving rarefaction wave, a right-moving contact discontinuity and a right-moving shock wave. It is seen that the BGK scheme and BGK-type scheme apparently exhibit higher resolution for the contact discontinuity than the KFVS scheme, and the numerical solutions of the BGK scheme and BGK-type scheme  resolves the shock wave better than the KFVS scheme.
A comparison between the BGK and sBGK schemes given in Fig. \ref{fig22:1DRP2} shows that the sBGK performs as well as the  BGK scheme but is more efficient.

\clearpage
\begin{example}[Perturbed shock tube problem] \label{ex:sinewave}\rm
 The initial data are
 \begin{equation*}
 \label{exeq:sinewave}
(\rho,u_1,p)(x,0)=
 \begin{cases}
   (1.0,0.0,1.0),& x<0.5,\\
   ( \rho _r,0.0,0.1),& x>0.5,
 \end{cases}
\end{equation*}
where $ \rho_r=2. + 0.3\sin(50x)$.
It is a perturbed shock tube problem, which has widely been used to test the ability of the shock-capturing schemes in resolving non-relativistic small-scale flow features.
\end{example}

\begin{figure}[h]
 \centering
 \subfigure[$\rho $]{
 \includegraphics[width=0.31\textwidth]{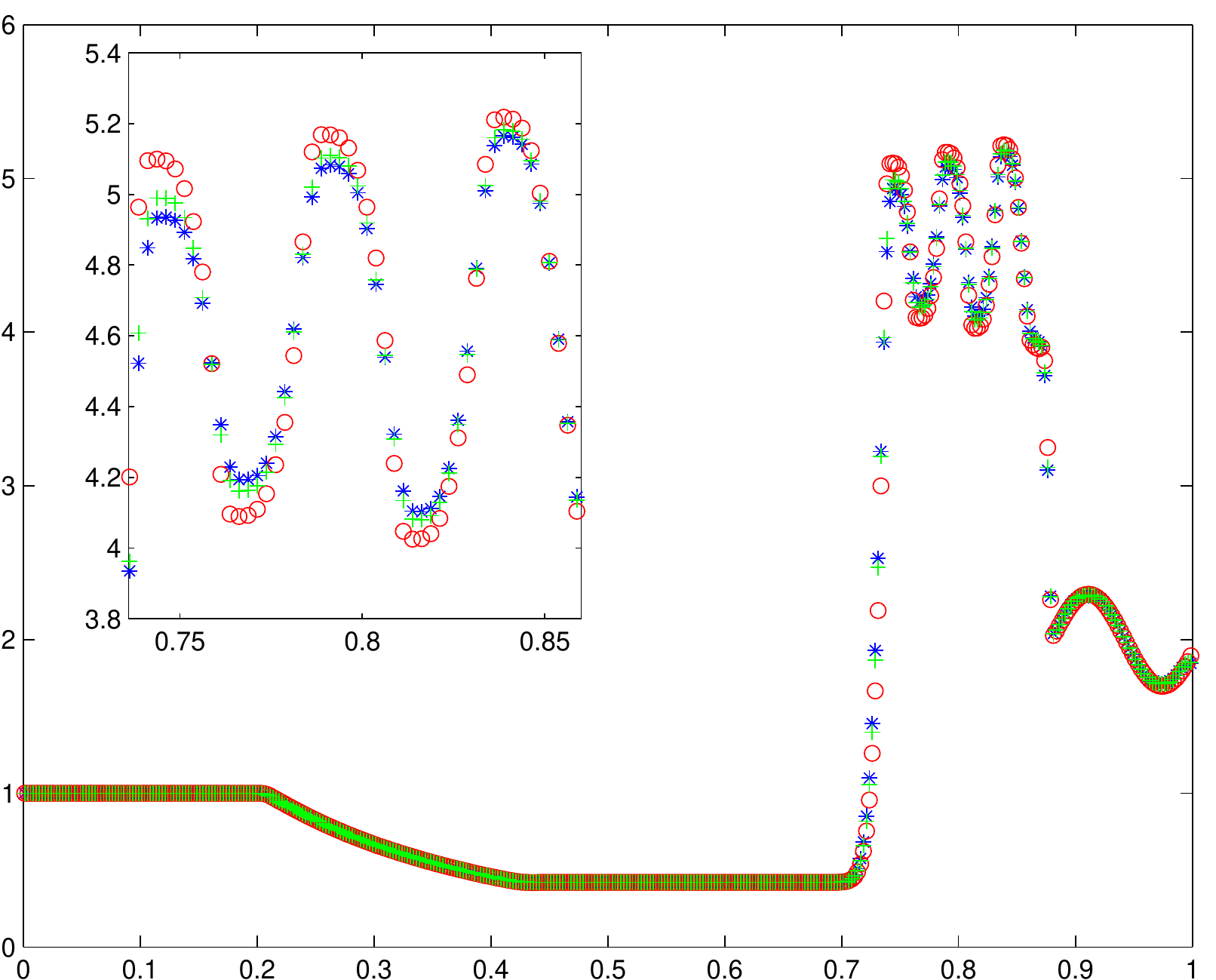}
 }
 \subfigure[$u_1$]{
 \includegraphics[width=0.31\textwidth]{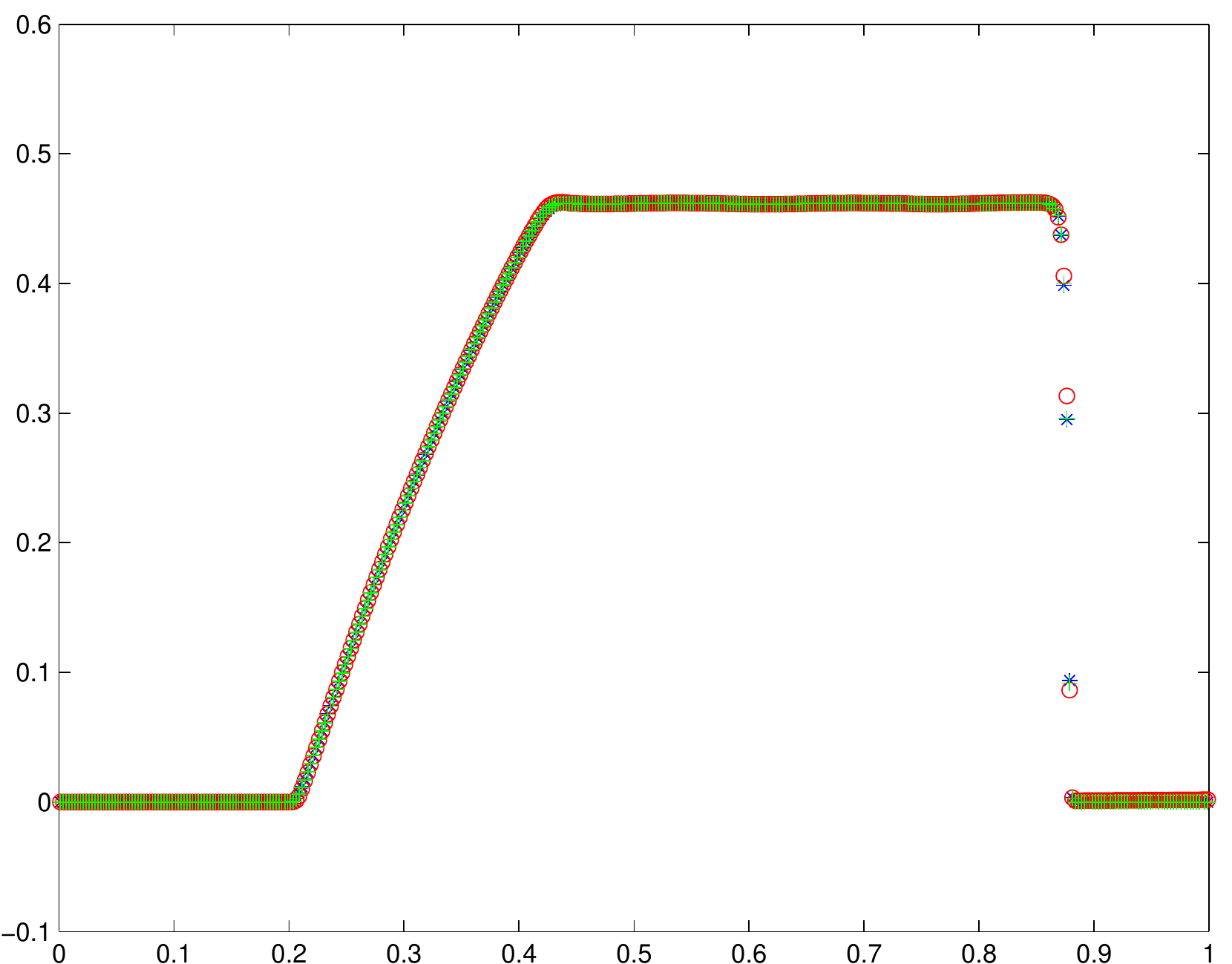}
 }
 \subfigure[$p$]{
 \includegraphics[width=0.31\textwidth]{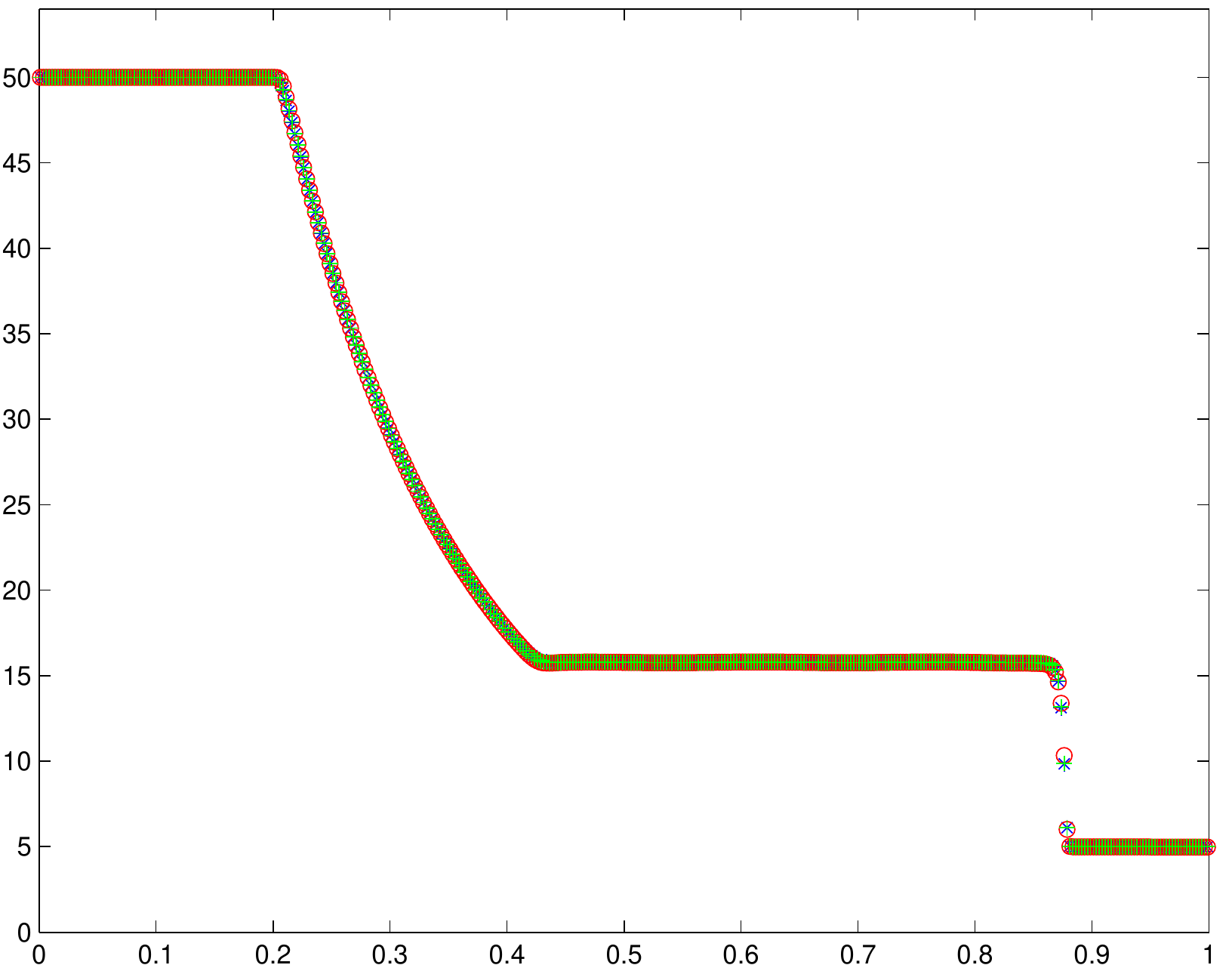}
 }
 \caption{Example \ref{ex:sinewave}: The solutions {at} $t=0.5$  obtained by the BGK scheme (``{$\circ$}"),
 the KFVS scheme (``{$*$}") and the BGK-type scheme ("{$+$}") with 400 uniform cells. }
 \label{fig:sinewave}
\end{figure}

\begin{figure}[h]
 \centering
 \subfigure[$\rho $]{
 \includegraphics[width=0.3\textwidth]{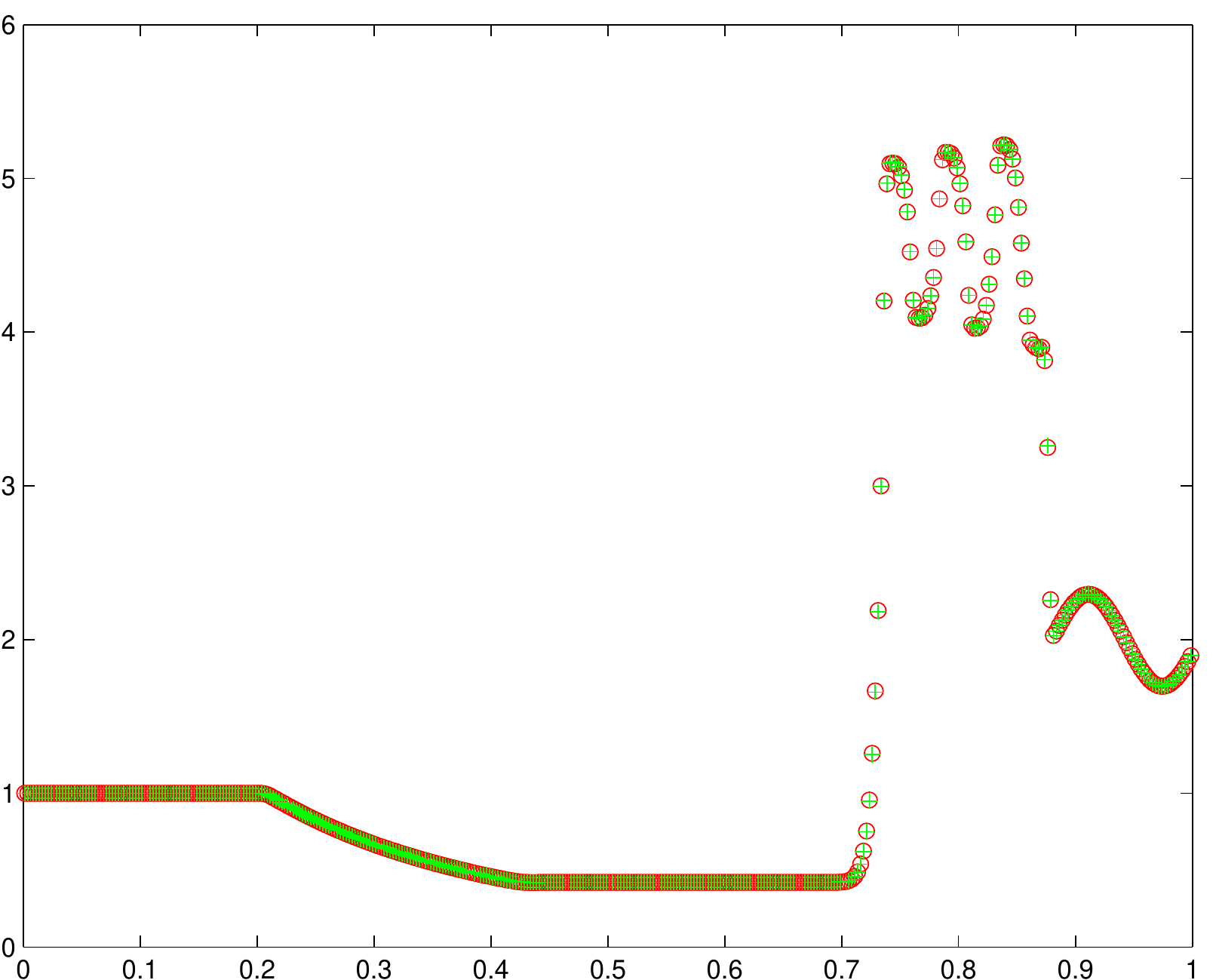}
 }
 \subfigure[$u_1$]{
 \includegraphics[width=0.3\textwidth]{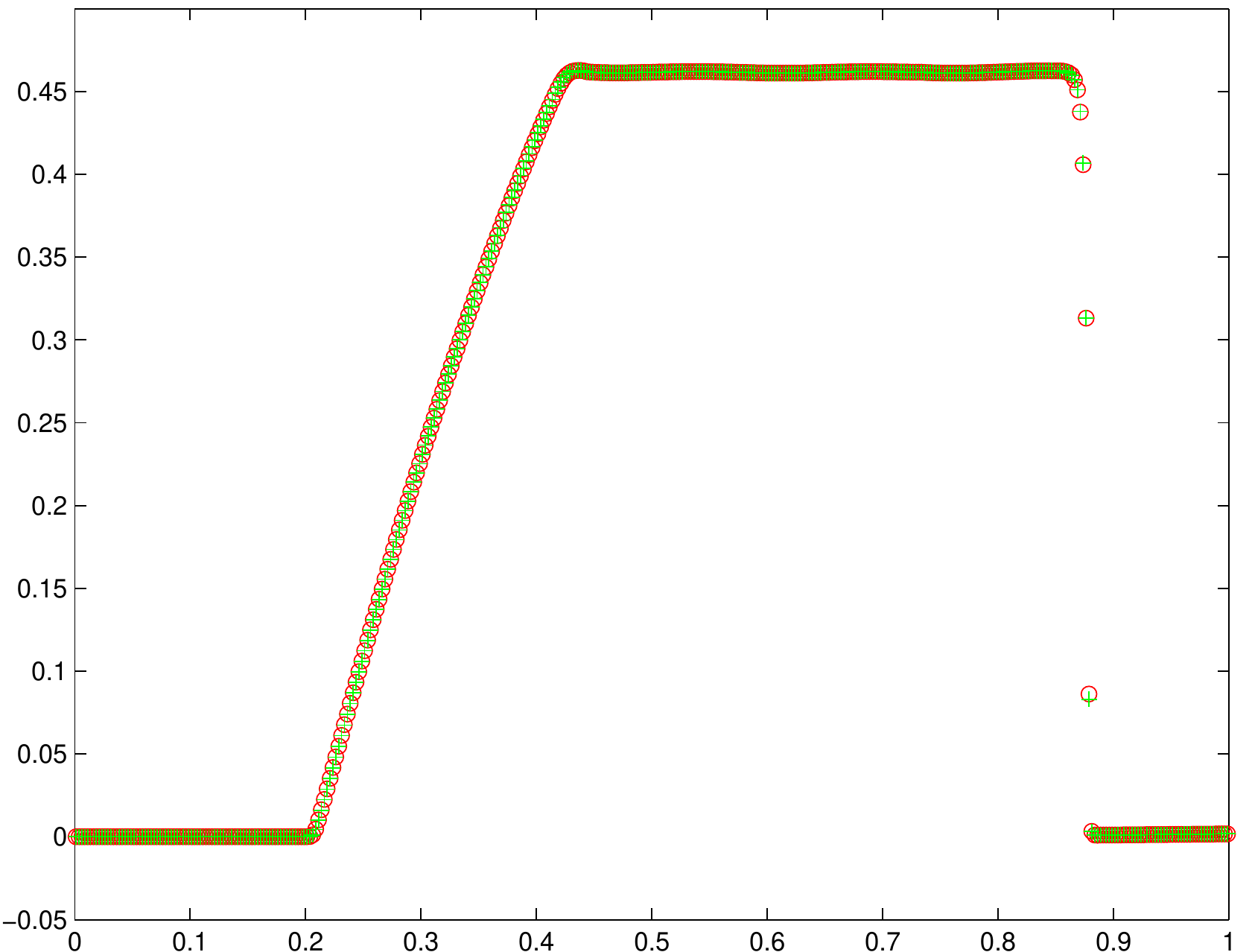}
 }
 \subfigure[$p$]{
 \includegraphics[width=0.3\textwidth]{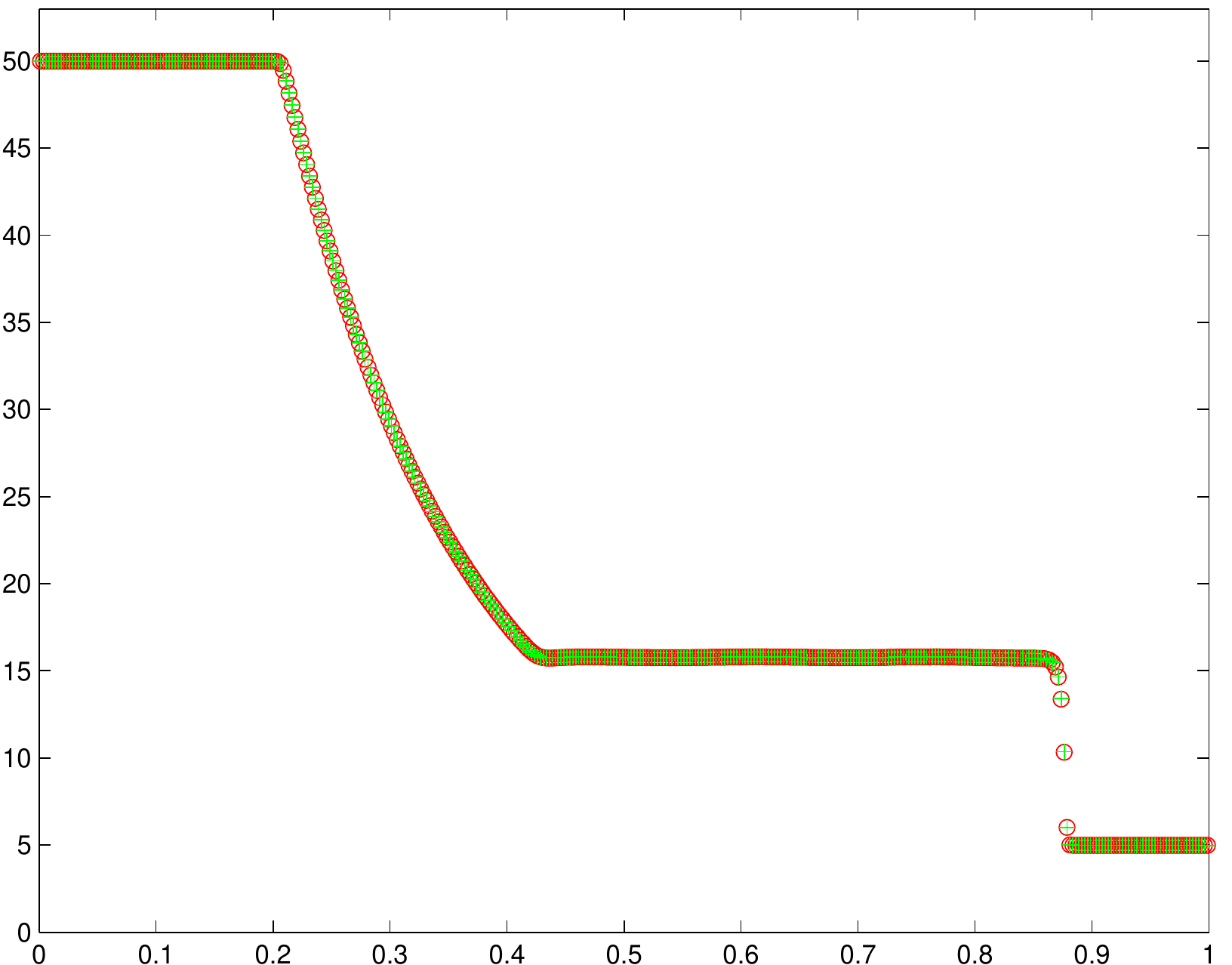}
 }
 \caption{Example \ref{ex:sinewave}: The solutions {at} $t=0.5$  obtained by the  BGK scheme (``{$\circ$}") and the sBGK scheme
 (``{$+$}") with 400 uniform cells. }
 \label{fig2:sinewave}
\end{figure}
As the time increases, the initial shock wave is moving into a sinusoidal density field, some complex but smooth structures are generated at the left hand side of the shock wave  when it interacts with the sine wave.
Fig. \ref{fig:sinewave} plots the numerical results at $t=0.5$ in the computational domain $\Omega=[0,1]$ obtained by using our BGK scheme (``{$\circ$}"), the KFVS scheme (``{$*$}") and the BGK-type scheme (``{$+$}") with 400 uniform cells. The numerical results show that the BGK scheme has better resolution for complex wave structures than the BGK-type scheme and the KFVS scheme. The results in Fig. \ref{fig2:sinewave} shows that the sBGK scheme can give the almost same resolution of the complex wave structure as  the BGK scheme.

\clearpage
\begin{example}[Collision of blast waves]\label{ex:blast}\rm
The last example is to simulate the collision of two strong relativistic blast waves. The initial data are taken as follows
 \begin{equation*}
   \label{eqex:blast}
  (\rho,u_1,p)(x,0)=
   \begin{cases}
     (1.0,0.0,100.0),& 0<x<0.1,\\
     (1.0,0.0,0.01),& 0.1<x<0.9,\\
     (1.0,0.0,10.0),& 0.9<x<1.0,
   \end{cases}
 \end{equation*}
\end{example}
and the reflecting boundary conditions are specified at the two ends of the computational domain $[0,1]$.
\begin{figure}[h]
 \centering
 \subfigure[$ \rho $]{
 \includegraphics[width=0.3\textwidth]{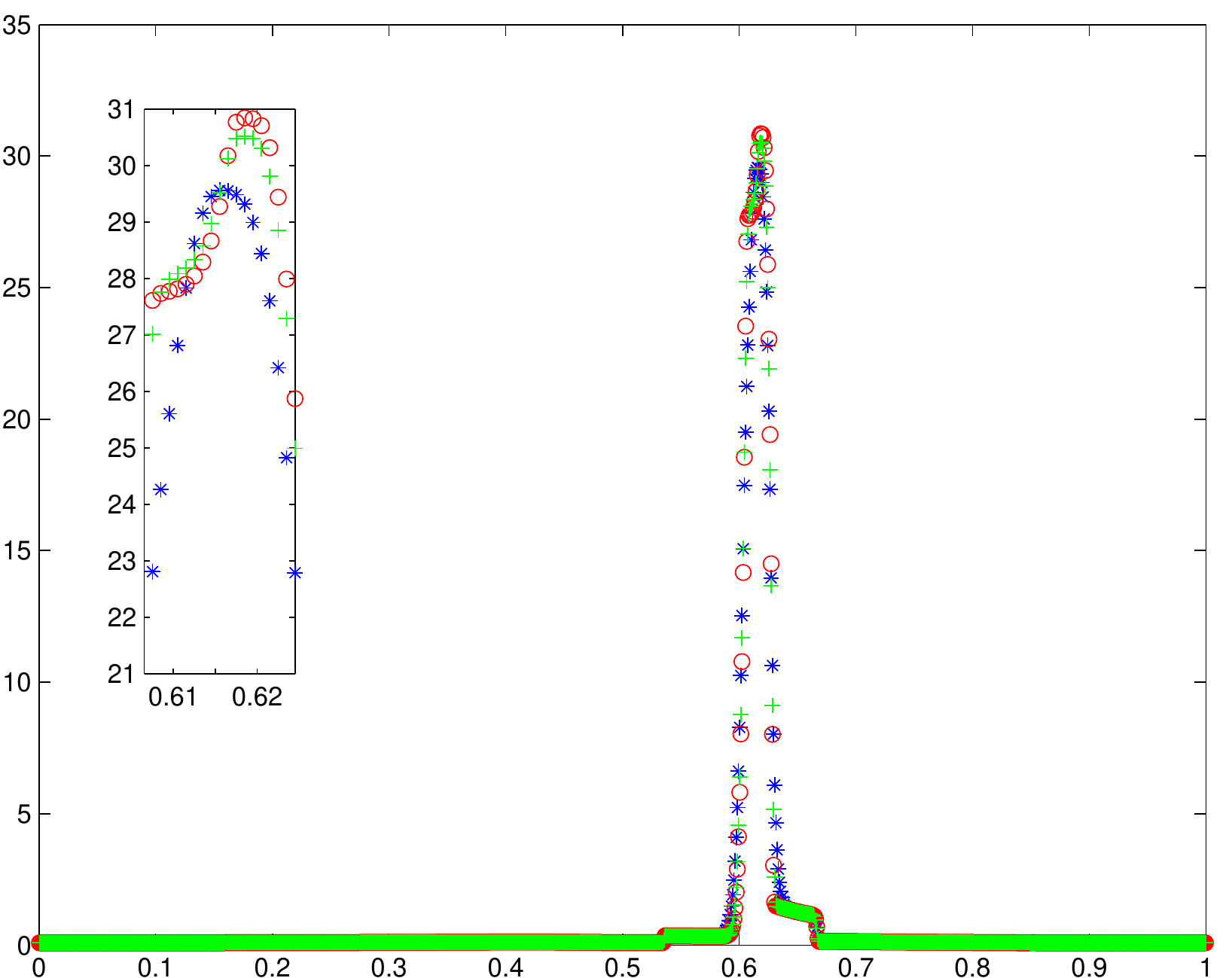}
 }
 \subfigure[$u_1$]{
 \includegraphics[width=0.3\textwidth]{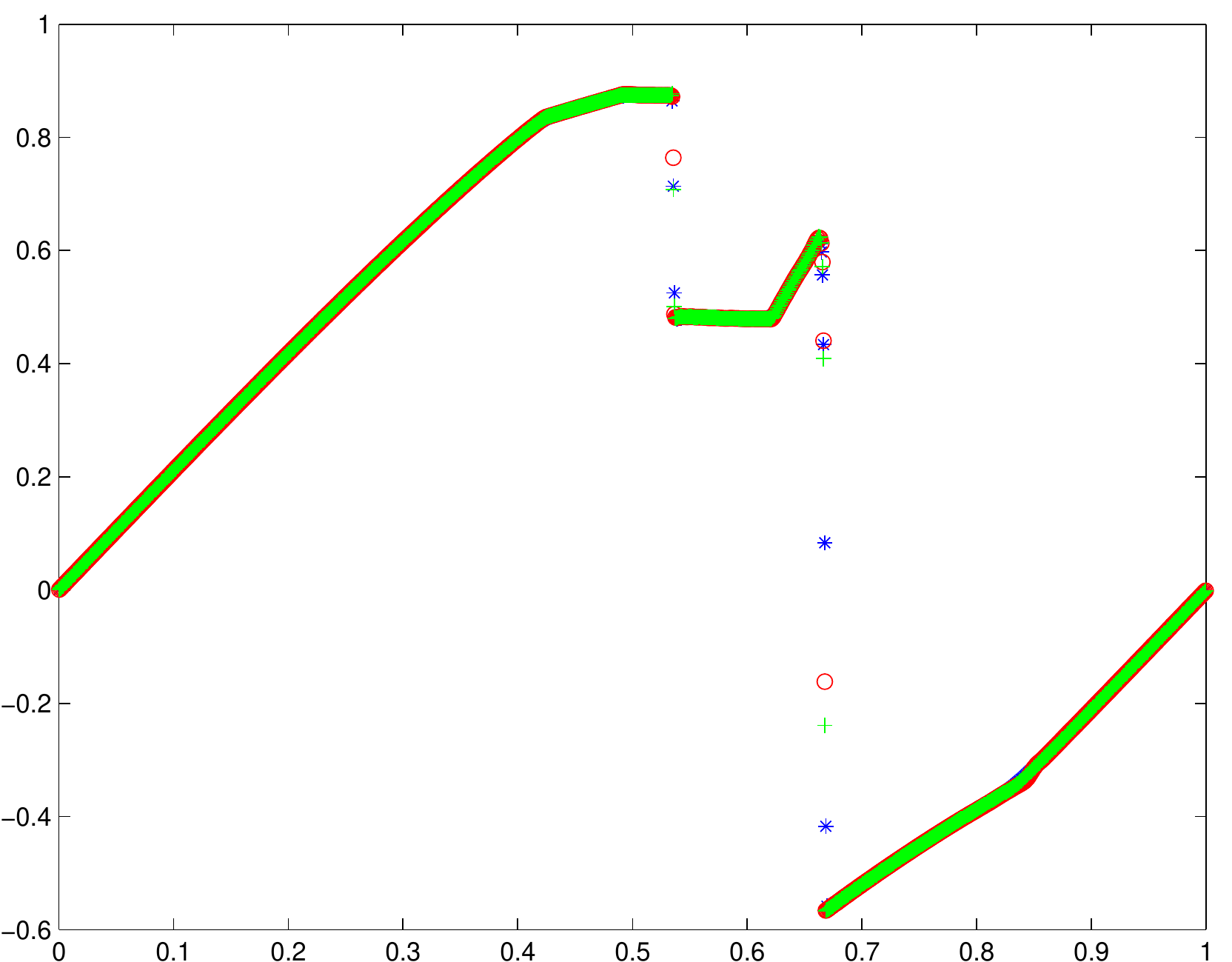}
 }
 \subfigure[$p$]{
 \includegraphics[width=0.3\textwidth]{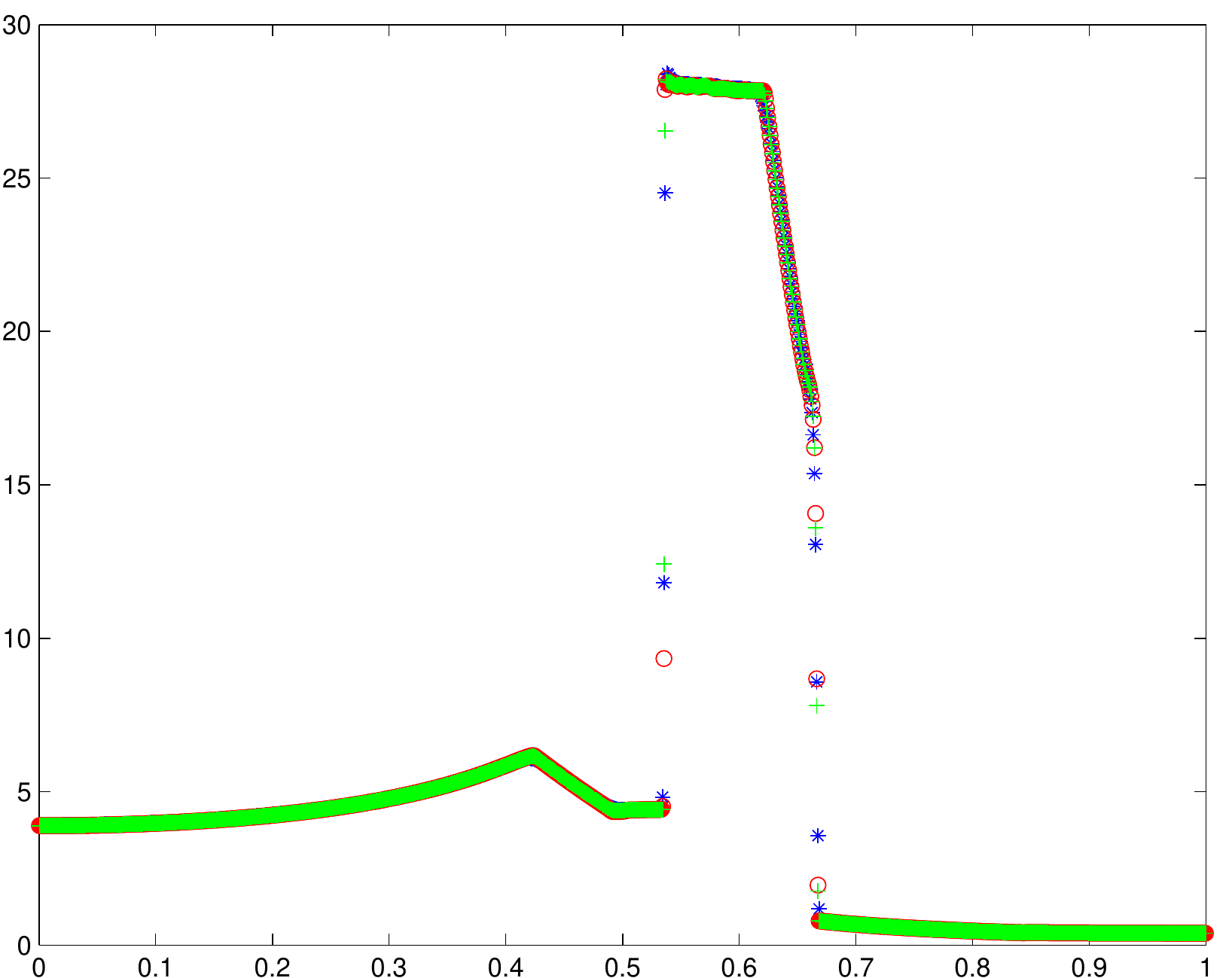}
 }
 \caption{Example \ref{ex:blast}: The results
 at $t=0.75$ obtained by the BGK scheme (``{$\circ$}"), the KFVS scheme (``{$*$}") and the BGK-type scheme (``{$+$}") with 1000  uniform cells.}
 \label{fig:blast}
\end{figure}

\begin{figure}[h]
 \centering
 \subfigure[$ \rho $]{
 \includegraphics[width=0.3\textwidth]{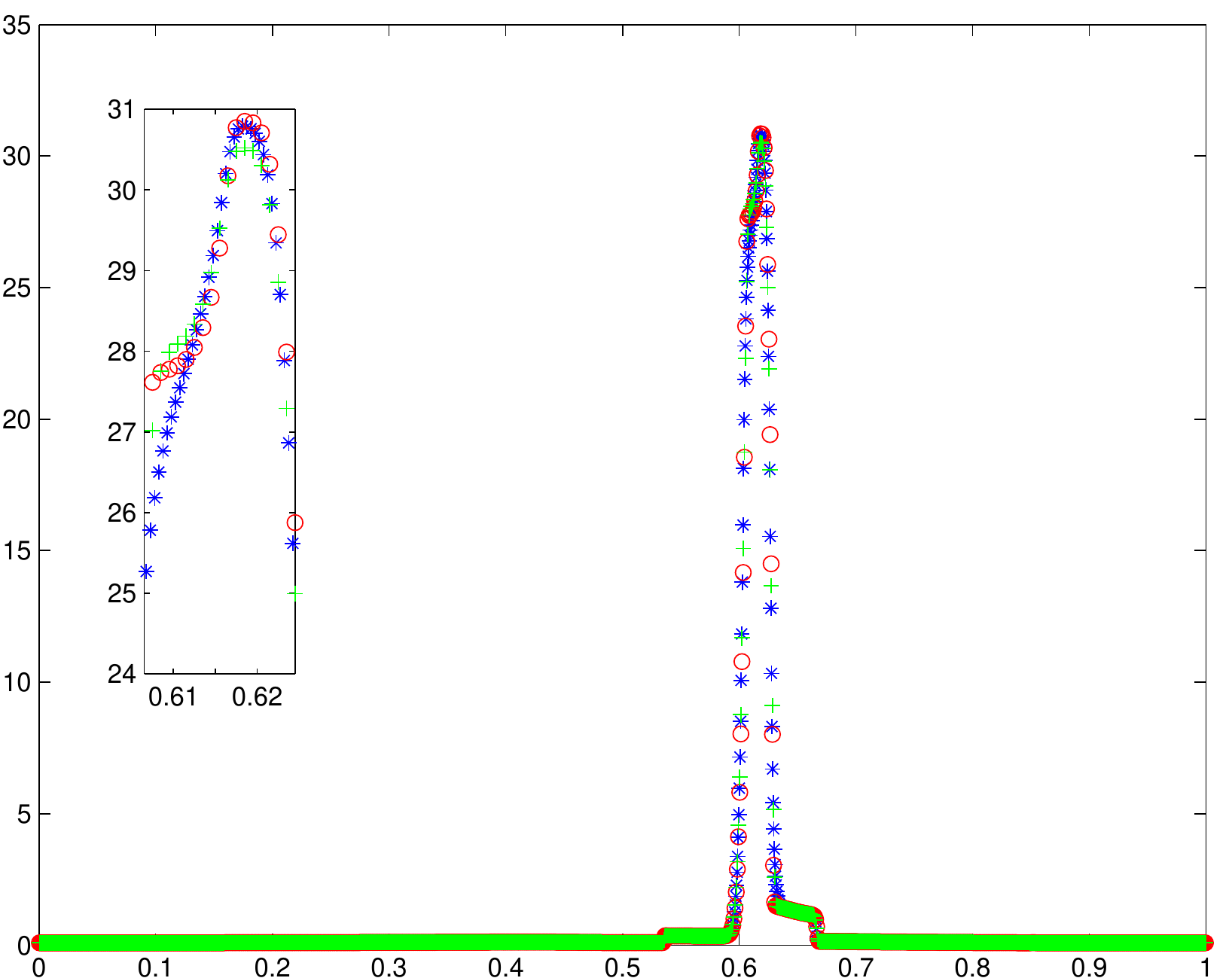}
 }
 \subfigure[$u_1$]{
 \includegraphics[width=0.3\textwidth]{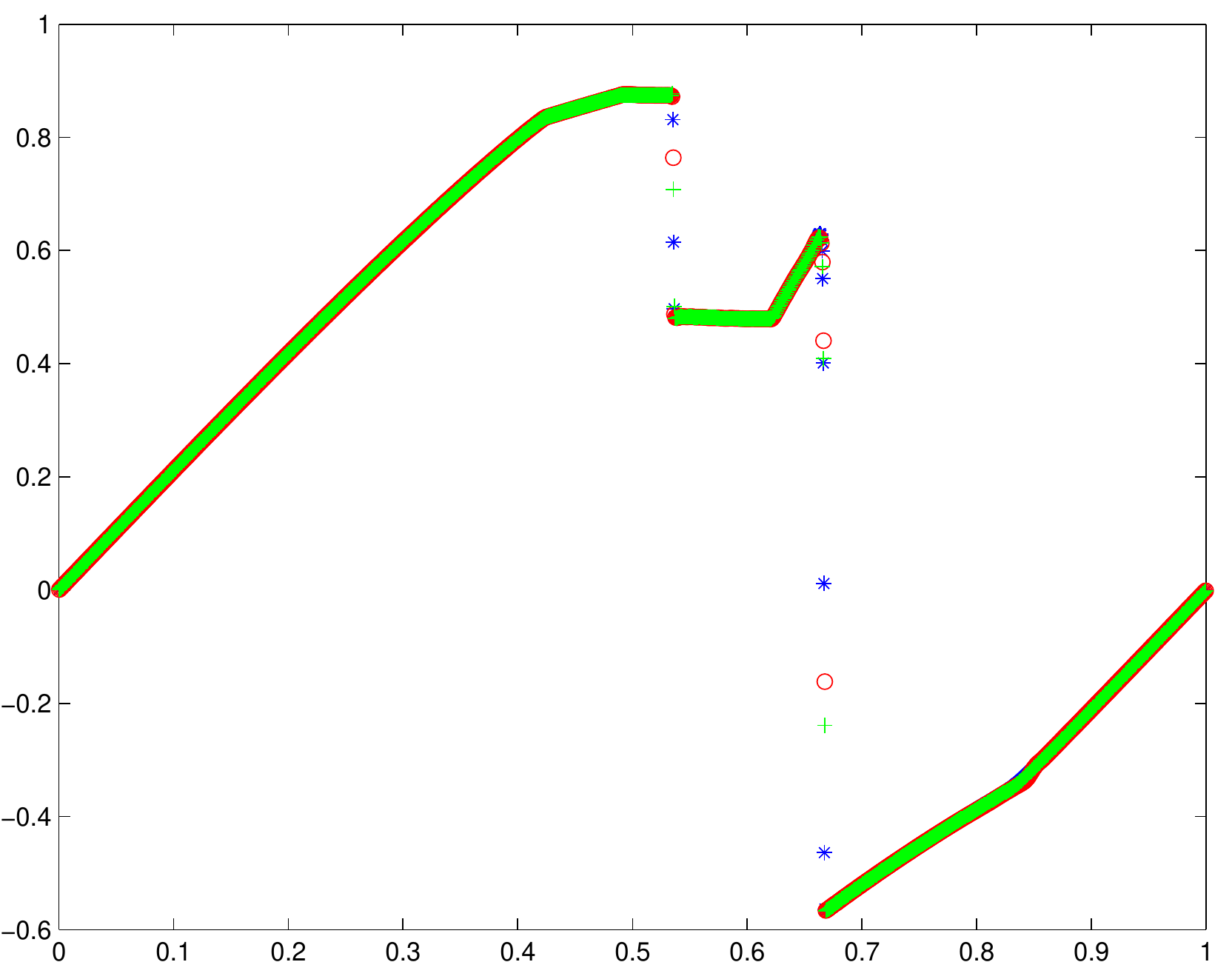}
 }
 \subfigure[$p$]{
 \includegraphics[width=0.3\textwidth]{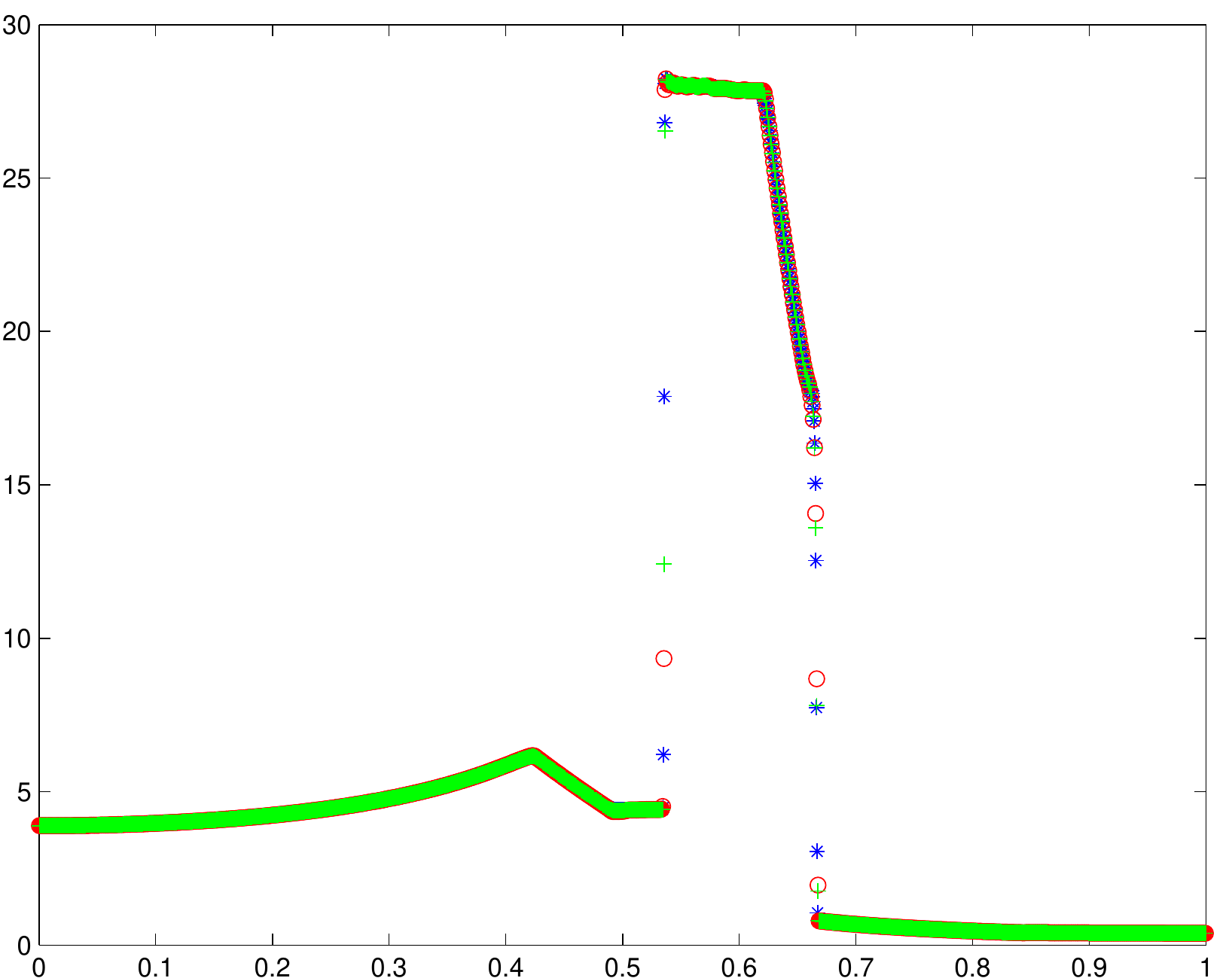}
 }
 \caption{Example \ref{ex:blast}: The results
 at $t=0.75$ obtained by the BGK scheme (``{$\circ$}", with 1000 uniform cells), the KFVS scheme (``{$*$}", with 2000  uniform cells)
 and the BGK-type scheme (``{$+$}", with 1000  uniform cells).}
 \label{fig:blast2}
\end{figure}

\begin{figure}[h]
 \centering
 \subfigure[$ \rho $]{
 \includegraphics[width=0.3\textwidth]{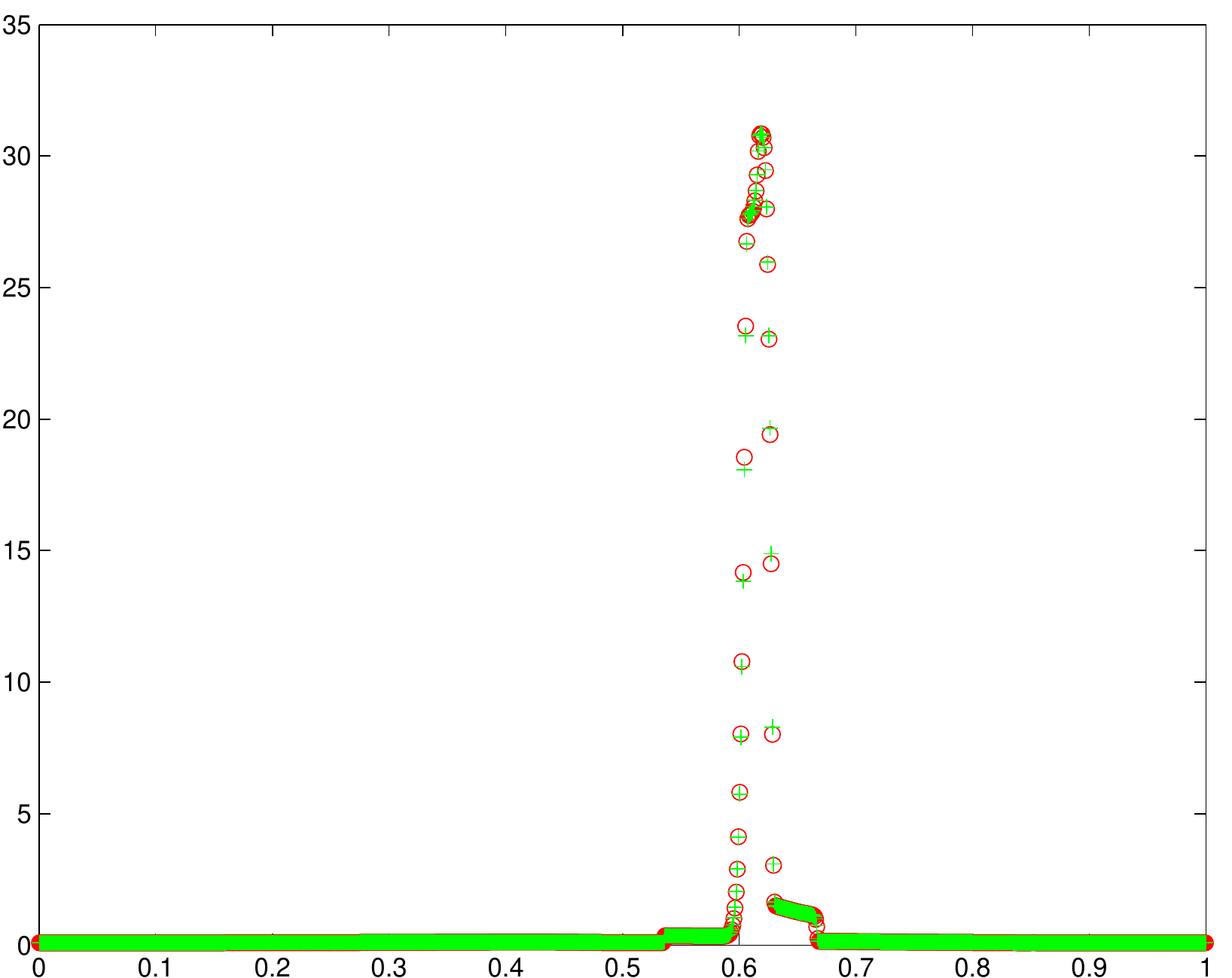}
 }
 \subfigure[$u_1$]{
 \includegraphics[width=0.3\textwidth]{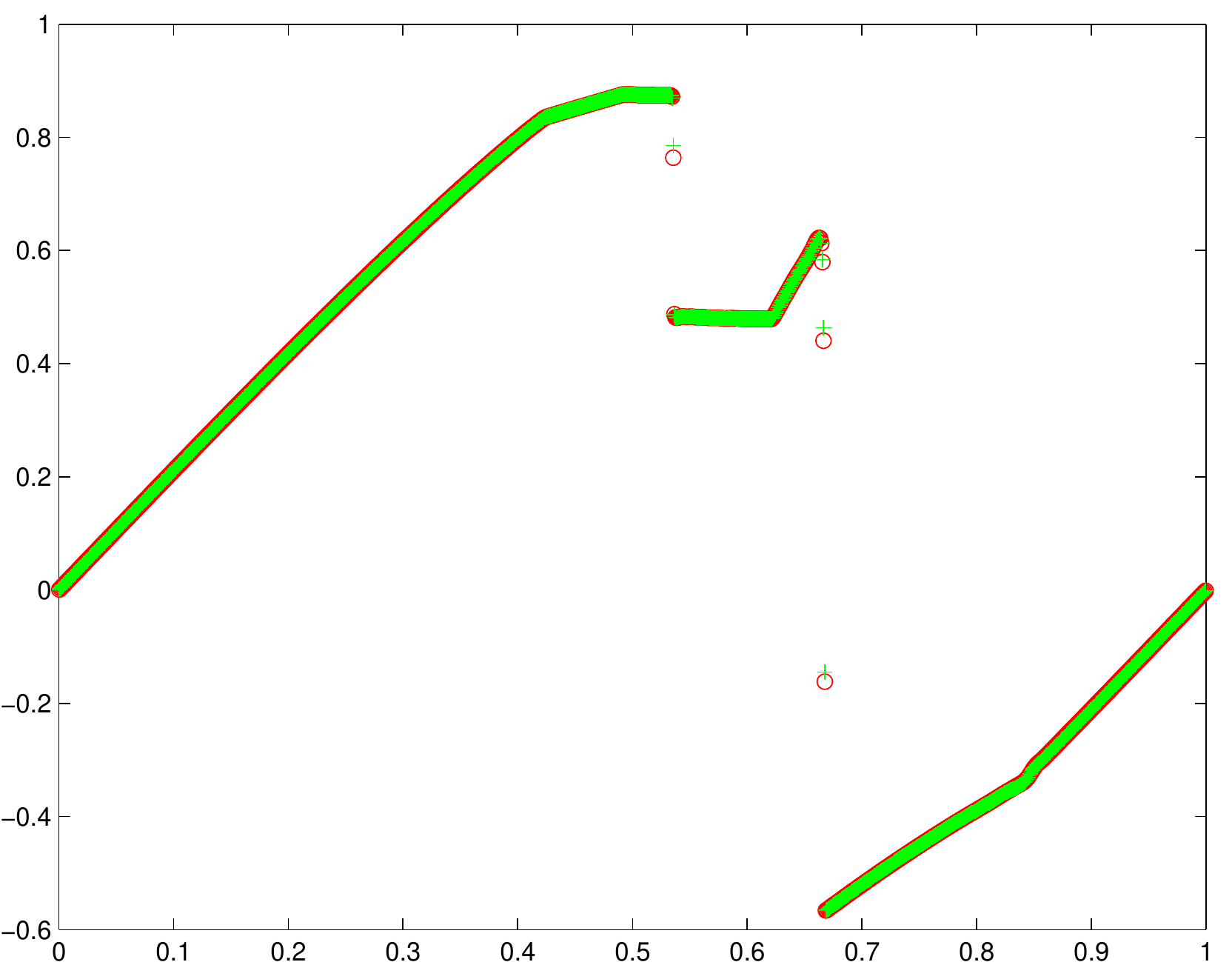}
 }
 \subfigure[$p$]{
 \includegraphics[width=0.3\textwidth]{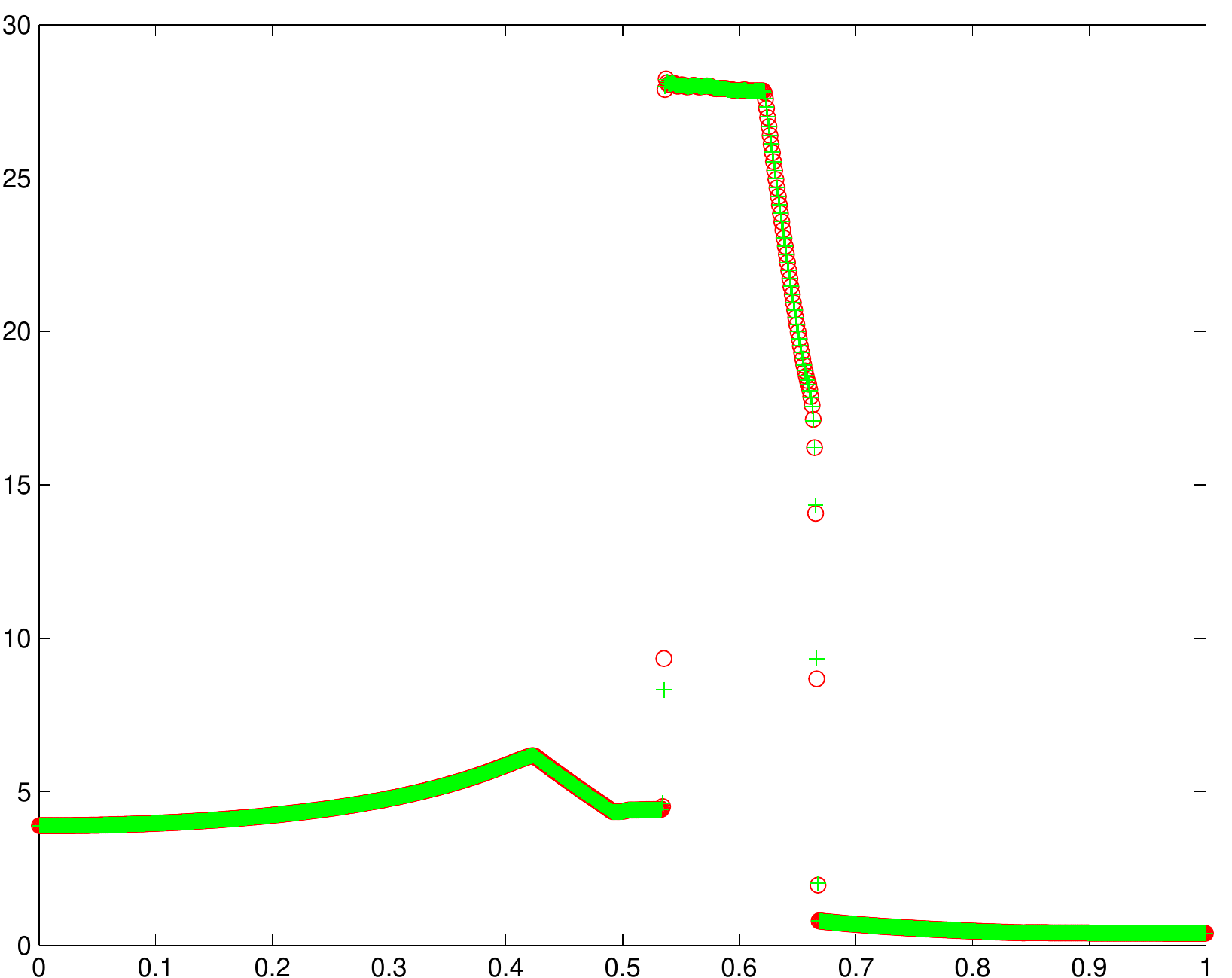}
 }
 \caption{Example \ref{ex:blast}: The results at $t=0.65$ obtained by the BGK scheme (``{$\circ$}") and the sBGK scheme (``{$+$}") with 1000  uniform cells.}
  \label{fig22:blast}
\end{figure}
Fig. \ref{fig:blast} gives the numerical results at $t=0.75$ in the domain $[0,1]$ obtained by the BGK scheme (``{$\circ$}"), the KFVS scheme (``{$*$}") and the BGK-type scheme (``{$+$}").
It is found that the solutions are bounded by two shock waves at $t=0.75$ because both initial discontinuities evolve and two blast waves collide with each other. Those schemes may well resolve those  discontinuities. However, the peak of the narrow structure in the density calculated by the KFVS scheme deviates from the results obtained by the BGK and BGK-type schemes. After refining the mesh with 2000 uniform cells, Fig. \ref{fig:blast2} shows that the dissipation of the KFVS scheme near the contact discontinuity decreases, and its   peak position of the density agrees with  the BGK and BGK-type schemes on 1000 meshes.
Fig. \ref{fig22:blast} shows that   resolving the complex wave structures by
the sBGK scheme is almost the same as the  BGK scheme.

\subsection{2D case} 
This section solves several 2D RHD problems by only using the sBGK scheme, because using the  BGK scheme to solve 2D problems is too expensive to be acceptable. Those problems are the smooth problem, the Riemann problems, the implosion in a box and the relativistic jet.

\begin{example}[Accuracy test]\label{ex:accurary2D}\rm
The  smooth problem with the exact solution
\[\rho(x,y,t)= 1+0.5\sin(2\pi (x-0.2t + y-0.2t)), u_1(x,y,t)=u_2(x,y,t)=0.2, p(x,y,t)=1,\]
is used to test accuracy of the numerical methods.
It describes a sine wave propagating periodically in the domain $\Omega=[0,1]\times[0,1]$ at an angle $\alpha={45}^{\circ}$ with the $x$-axis.
The computational domain $\Omega$ is divided into $N\times N$ uniform cells and the periodic boundary conditions are specified.
\end{example}
\begin{table}[H]
 \setlength{\abovecaptionskip}{0.cm}
 \setlength{\belowcaptionskip}{-0.cm}
 \caption{Example \ref{ex:accurary2D}: Numerical errors at $t = 0.2$  in $l^1, l^\infty$-norms and convergence rates with or without limiter.}\label{tab:accuracy2D}
 \begin{center}
   \resizebox{\textwidth}{28mm}{
   \begin{tabular}{*{9}{c}}
     \toprule
     \multirow{2}*{N} &\multicolumn{4}{c}{With limiter} &\multicolumn{4}{c}{Without limiter}\\
     \cmidrule(lr){2-5}\cmidrule(lr){6-9}
     & $l^1$ error  & $l^1$ order  &  $l^\infty$ error  &  $l^\infty$ order & $l^1$ error  & $l^1$ order  &  $l^\infty$ error  &  $l^\infty$ order\\
     \midrule
     25   & 3.5446e-03 &  -        & 1.1088e-02    & -            &9.0008e-04   &-       & 1.5882e-03 & -\\
     50   & 1.0431e-03 &  1.7647   & 4.3227e-03    & 1.3589       &2.3054e-04   &1.9650  & 3.9752e-04 & 1.9983\\
     100  & 2.6957e-04 &  1.9522   & 2.0557e-03    & 1.0723       &5.8127e-05   &1.9878  & 9.9022e-05 & 2.0052\\
     200  & 7.3381e-05 &  1.8772   & 8.7575e-04    & 1.2310       &1.45467e-05  &1.9985  & 2.4604e-05 & 2.0089\\
     400  & 1.8600e-05 &  1.9801   & 3.1622e-04    & 1.4696       &3.63818e-06  &1.9994  & 6.1313e-06 & 2.0046\\
     \bottomrule
   \end{tabular}}
 \end{center}
\end{table}
The errors and numerical orders of accuracy for the density $\rho$ by
using the present sBGK scheme are listed in Tables \ref{tab:accuracy2D}. The results show that second-order rates of convergence in the $l^1$
norm can be obtained, but the rate of convergence in $l^\infty$ is  little lower when the van Leer limiter is used.

\begin{example}[Riemann problem I]\rm \label{ex:2DRP1} The initial data
are given by
\begin{small}
 \begin{equation*}
   \label{exeq:SRHD2DRP1}
  (\rho,u_1,u_2,p)(x,y,0)=
   \begin{cases}
     (1,0,0,1),& x>0.5,y>0.5,\\
     (0.5121,-0.3548,0,0.4),& x<0.5,y>0.5,\\
     (1,-0.3548,-0.3548,1), &x<0.5,y<0.5,\\
     (0.5121,0,-0.3548,0.4),&x>0.5,y<0.5.
   \end{cases}
 \end{equation*}
 \end{small}
\end{example}
\begin{figure}[h]
 \centering
 \includegraphics[width=0.45\textwidth]{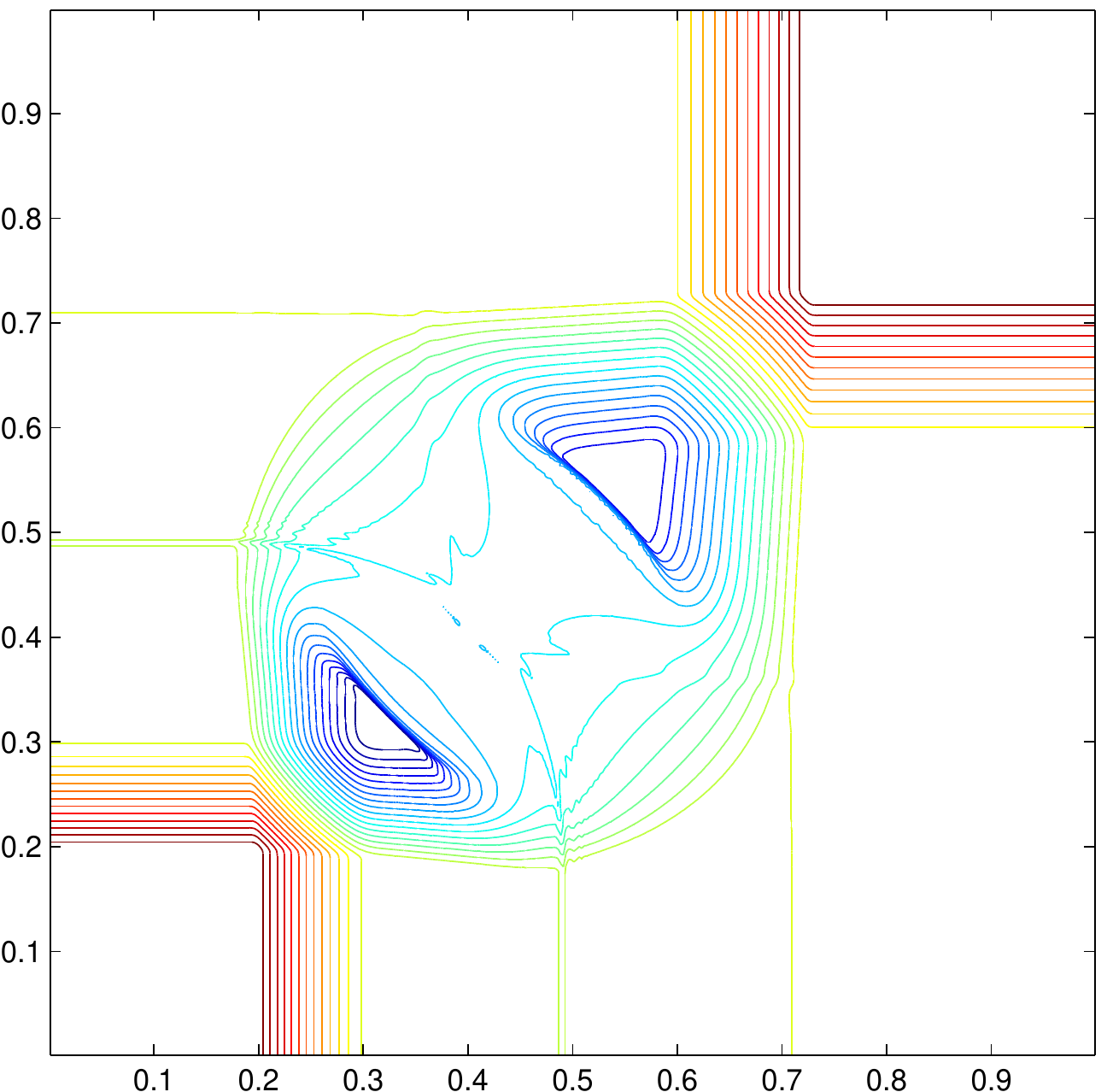}
 \caption{Example \ref{ex:2DRP1}: The contour of the density logarithm $\ln \rho$ at time $t = 0.4$ within the domain $[0,1] \times [0,1]$ obtained with the sBGK scheme and $400 \times 400$ uniform cells (30 equally spaced contour lines).}
 \label{fig:FourRare}
\end{figure}
The computational domain $\Omega$  $[0,1] \times [0,1]$ is
divided into $400 \times 400$ uniform cells.
Fig. \ref{fig:FourRare} shows clearly that four rarefaction waves are formed from those four initial discontinuities. As time goes on, the four rarefaction waves interact each other and form two curved shock waves perpendicular to the line $x = y$.

\begin{example}[Riemann problem II]\rm \label{ex:2DRP2}
The initial data  are taken as
 \begin{small}
  \begin{equation*}
    \label{exeq:2DRP1}
   (\rho,u_1,u_2,p)(x,y,0)=
    \begin{cases}
      (0.025510800277587, 0, 0, 0.142814727617575),& x>0.5,y>0.5,\\
      (0.1,0.7 , 0, 1),& x<0.5,y>0.5,\\
      (0.5, 0, 0, 1),  & x<0.5,y<0.5,\\
      (0.1, 0, 0.7, 1),& x>0.5,y<0.5,
    \end{cases}
  \end{equation*}
 \end{small}%
where the left and bottom discontinuities are contact discontinuities and the top and right ones are two shock waves with the speed of 0.855938.
\end{example}

\begin{figure}[h]
 \centering
 \includegraphics[width=0.45\textwidth]{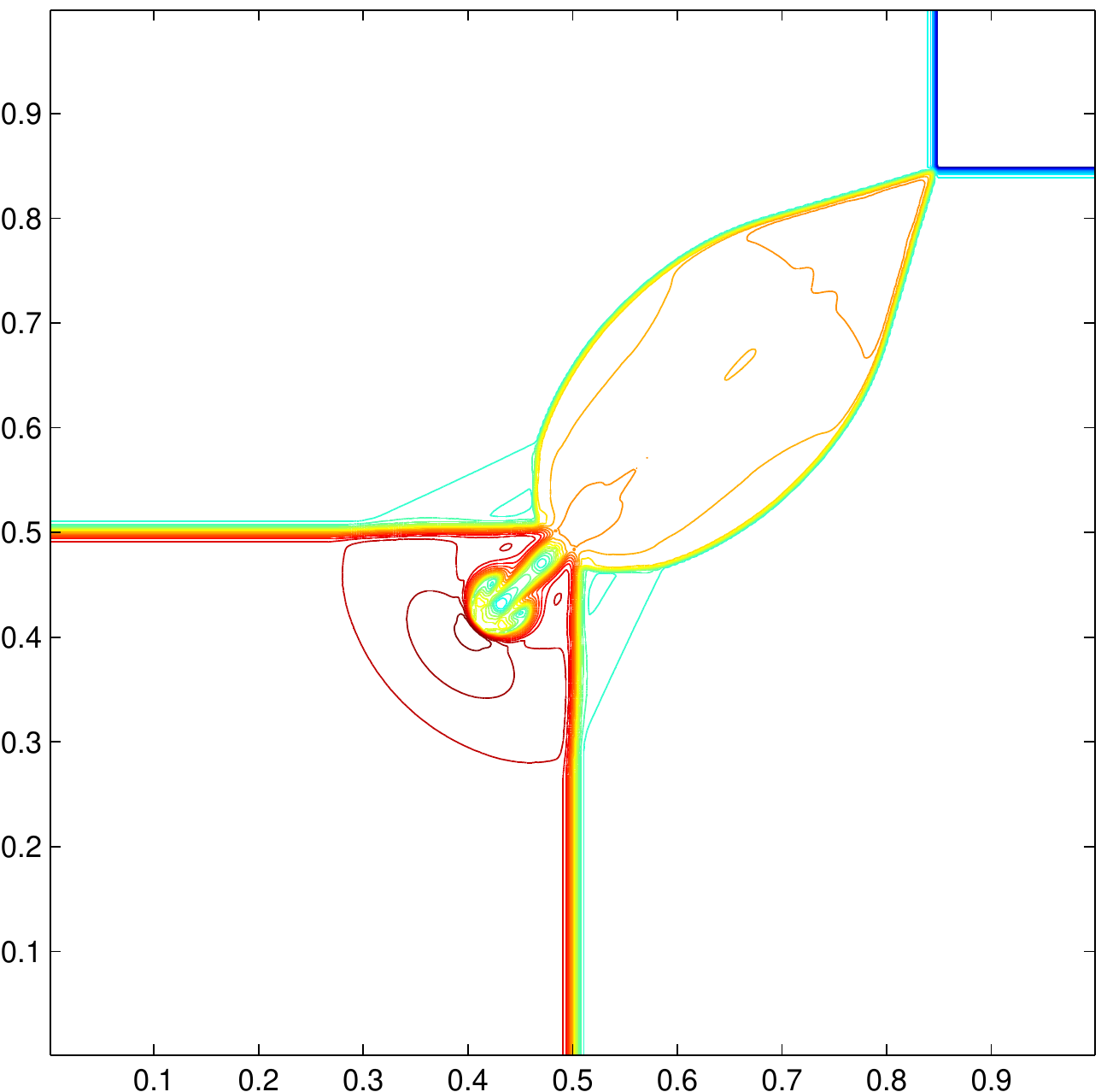}
 \caption{Example \ref{ex:2DRP2}: The contour of the density logarithm $\ln \rho$ and at time $t = 0.4$ within the domain $[0,1] \times [0,1]$
 obtained with the sBGK method and $400 \times 400$ uniform cells (30 equally spaced contour lines).}
\label{fig:2DRP2}
\end{figure}
Fig. \ref{fig:2DRP2} gives the contours of the density logarithm at
$t = 0.4$ obtained by the sBGK scheme with $400 \times 400$ uniform cells and $\alpha_1 = \alpha_2 = C_1=C_2=1$. We see that the four initial
discontinuities interact each other and form a mushroom cloud  around the point $(0.5,0.5)$ as time increases, and the sBGK scheme captures the contact discontinuities, shock waves and other complex structures well.

\begin{example}[Riemann problem III]\rm \label{ex:2DRP3}
 The initial data  are
 \begin{equation*}
   \label{exeq:2DRP3}
  (\rho,u_1,u_2,p)(x,y,0)=
  \begin{cases}
    (0.5,0.5,-0.5,5),& x>0.5,y>0.5,\\
    (1,0.5,0.5,5),& x<0.5,y>0.5,\\
    (3,-0.5,0.5,5), &x<0.5,y<0.5,\\
    (1.5,-0.5,-0.5,5),&x>0.5,y<0.5,
  \end{cases}
\end{equation*}
which are about the interaction of four vortex sheets (i.e. contact discontinuities for the perfect relativistic fluid), whose vorticity $\omega=\partial_x u_2(0,x,y)-\partial_y u_1(0,x,y)$  is negative.
\end{example}
\begin{figure}[h]
 \centering
 \includegraphics[width=0.45\textwidth]{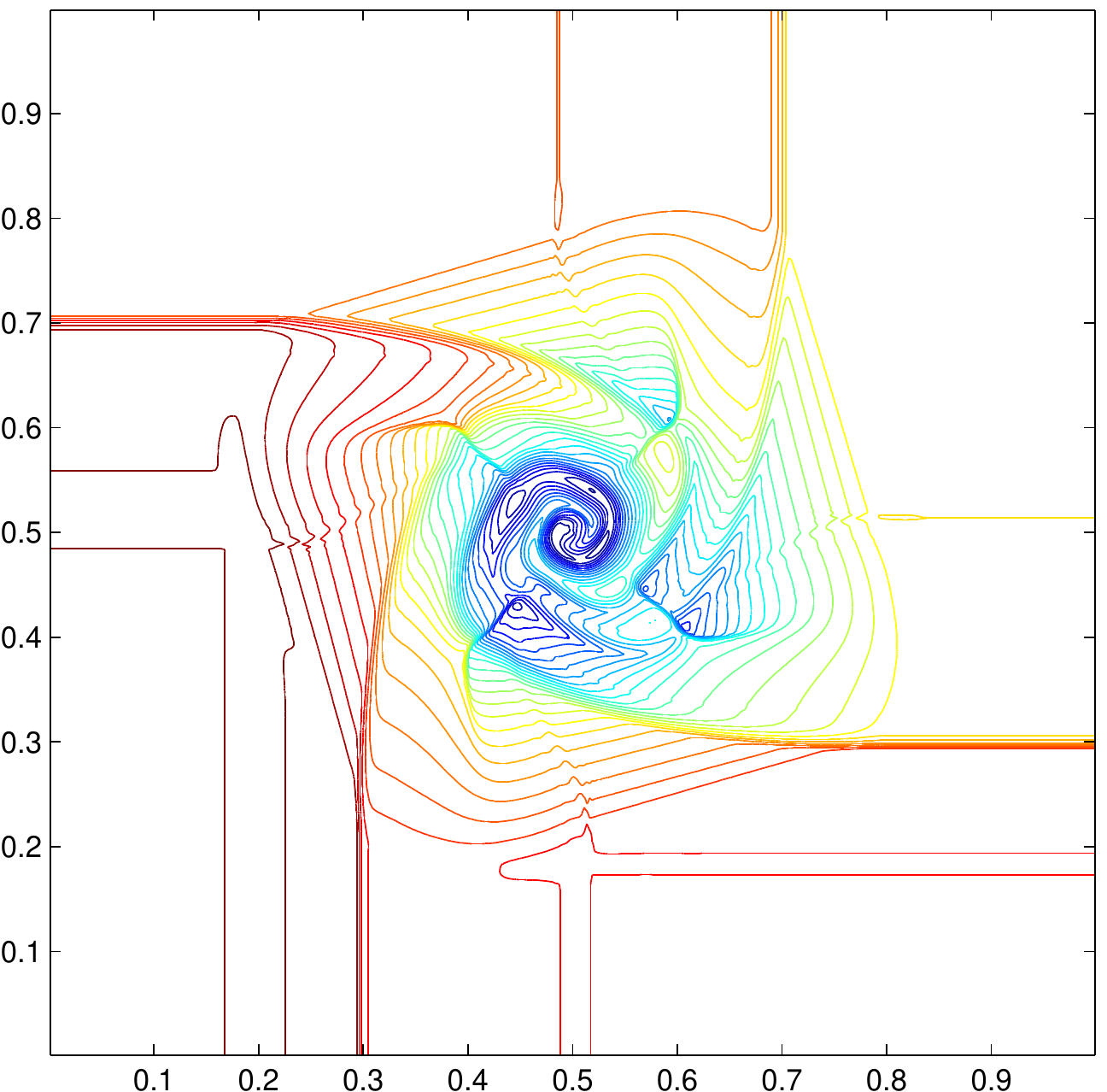}
 \caption{Example \ref{ex:2DRP3}: The contour of the density logarithm $\ln \rho$ at time $t = 0.4$ within the domain $[0,1] \times [0,1]$
 obtained with the sBGK method and $400 \times 400$ uniform cells (30 equally spaced contour lines).}
 \label{fig:2DRP3}
\end{figure}
Fig. \ref{fig:2DRP3} displays the contours of the density logarithm $\ln \rho$ at $t = 0.4$ obtained by using the sBGK scheme with $400 \times 400$ uniform cells. The results show that the four initial vortex sheets interact each other to form a spiral with the low density around the center of the domain as time increases. It is the typical cavitation phenomenon in gas dynamics.

\begin{example}[Implosion in a box]\rm\label{ex:implosion}
 This example considers the implosion inside a squared domain $[0,1]\times[0,1]$ with reflecting walls. 
Initially, the values of $(\rho,u_1,u_2,p)$ are specified as follows
 \begin{equation*}
   (\rho,u_1,u_2,p)(x,y,0)=\begin{cases}
                           (1,0,0,10),  & |x-1|\leq0.5,|y-1|\leq0.5,\\
                           (1,0,0,0.01), & \text{otherwise}.
                        \end{cases}
 \end{equation*}
\end{example}

\begin{figure}[h]
 \centering
 \subfigure[$ \rho $]{
   \includegraphics[width=4.5cm,height=4.5cm]{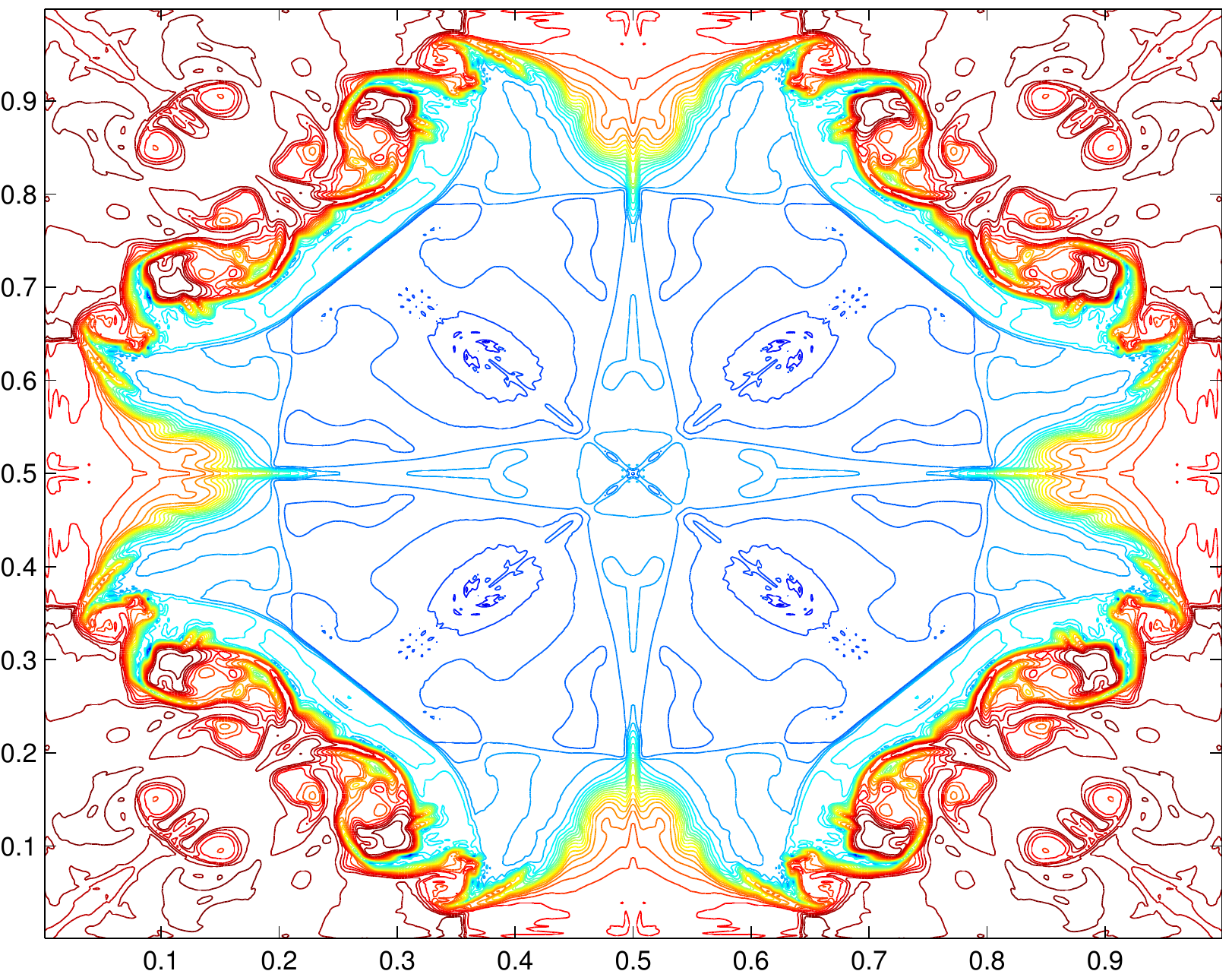}
 }
 \subfigure[$p$]{
   \includegraphics[width=4.5cm,height=4.5cm]{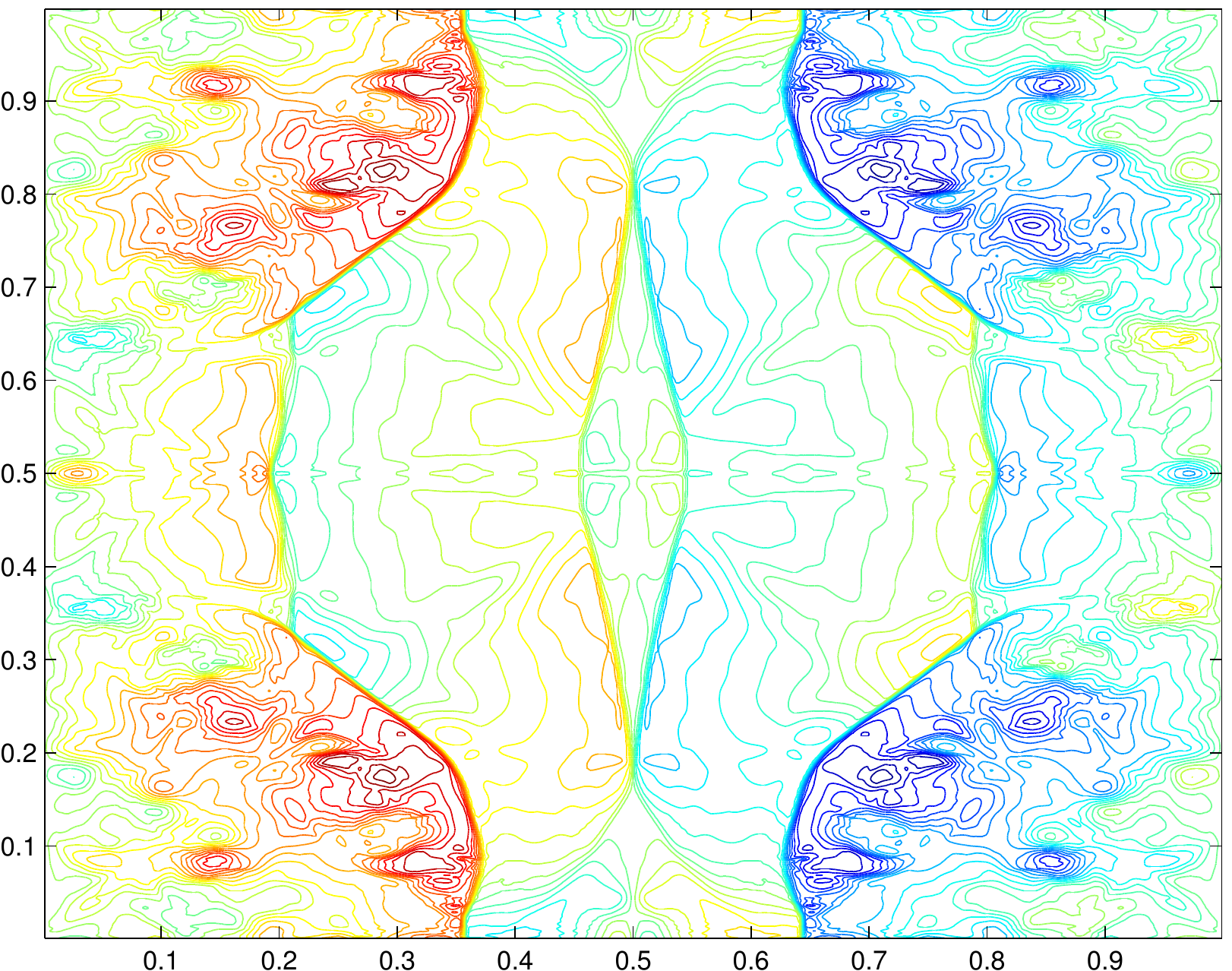}
 }

 \subfigure[$u_1$]{
   \includegraphics[width=4.5cm,height=4.5cm]{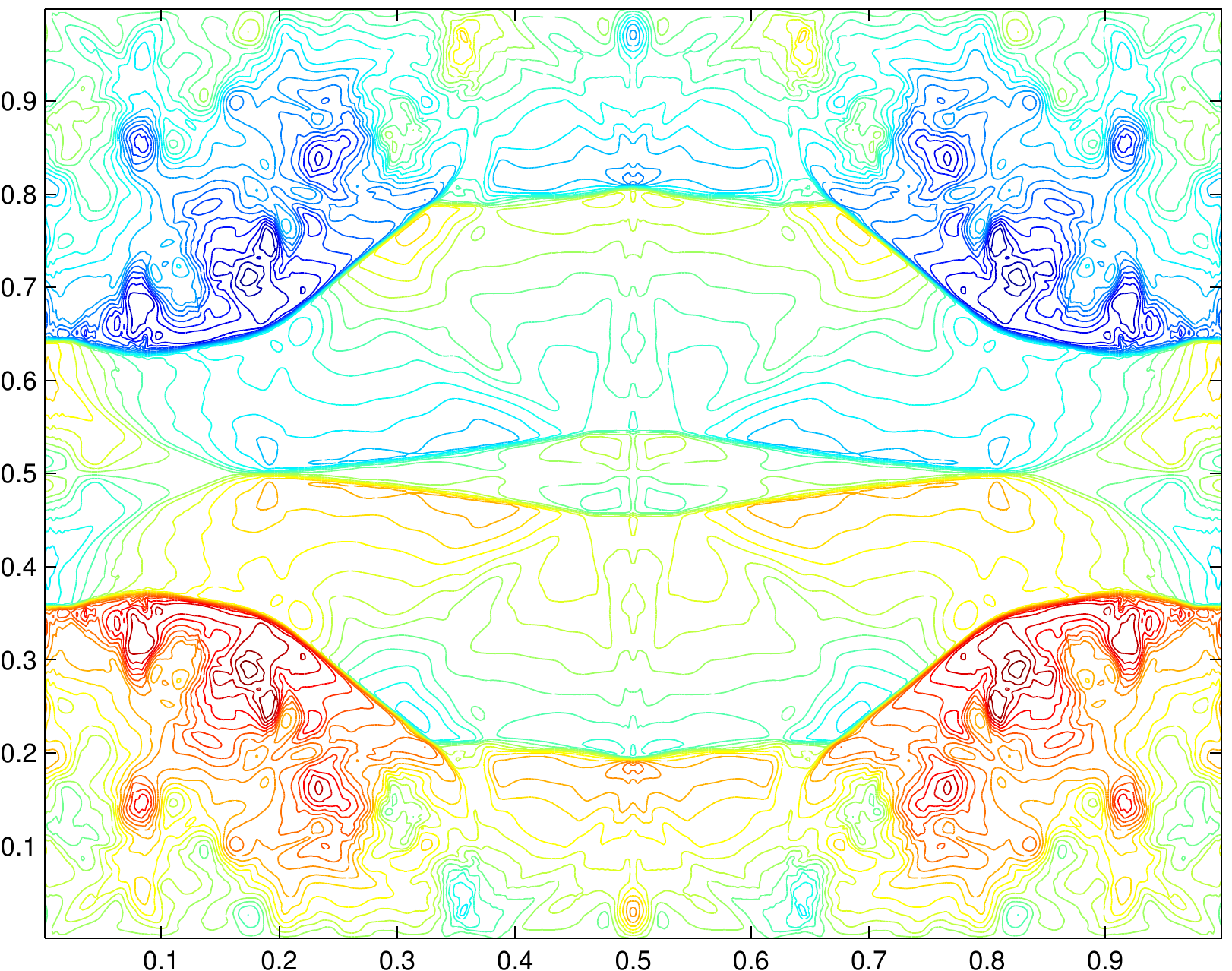}
 }
 \subfigure[$u_2$]{
   \includegraphics[width=4.5cm,height=4.5cm]{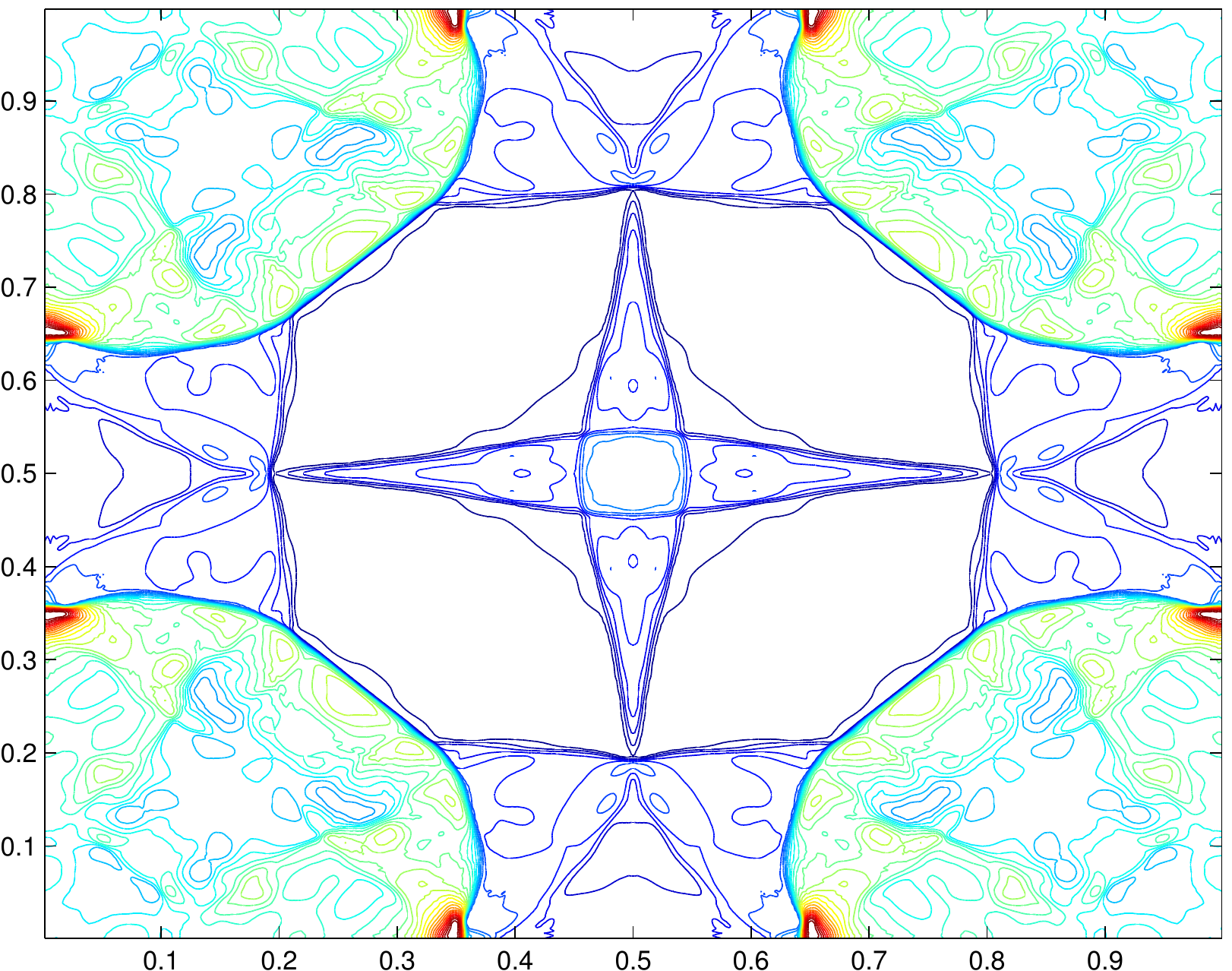}
 }
 \caption{Example \ref{ex:implosion}: The contours of the solutions at $t = 3$ obtained by the sBGK scheme with $400\times400$  uniform cells. 30 equally spaced contour lines are used.}
 \label{fig:implosion3}
\end{figure}
Fig. \ref{fig:implosion3} gives the contours of the density, the pressure and the velocities at time $t = 3$ obtained by our sBGK scheme on the uniform mesh of $400\times400$ cells. It can be seen   that four arc-shaped shock waves are formed at the four corners of the region, and the complex small wave structures are formed in the interior of the region due to the boundary reflections.

\begin{example}[Relativistic jet]\rm\label{ex:Jet}
 The dynamics of relativistic jet relevant in astrophysics has been  widely studied by numerical methods in the literature.
 This test  simulates a relativistic jet
 with  the computational region  $[0,12]\times[-3.5,3.5]$  and $\alpha_1 =\alpha_2= C_1=C_2=1$ using minmod limiter.
 The initial states for the relativistic jet beam are
 \begin{align*}
   ( \rho_b,u_{1,b},u_{2,b},p_b)=(0.01,0.99,0.0,0.1),\ \
   ( \rho_m,u_{1,m},u_{2,m},p_m)=(1.0,0.0,0.0,0.1),
 \end{align*}
 where the subscripts $b$ and $m$  correspond to the beam and  medium, {respectively}.
\end{example}
The initial relativistic jet is injected through a unit wide nozzle located at the middle of left boundary, while a reflecting boundary is specified outside of the nozzle.
Outflow boundary conditions with zero gradients of variables are imposed at the other part of the domain boundary. Fig. \ref{fig:Jet}  shows
the schlieren images of the rest-mass density logarithm $\ln \rho$ at $t=2,4,8,10$ obtained by our sBGK scheme
 on the mesh of $600\times350$ uniform cells.
 For a comparison,
 Fig. \ref{fig:JetLLF} displays the results at $t=10$ obtained by
  using the second-order high-resolution local Lax-Friedrich (LLF) scheme
  on the meshes of $600\times350$ and $1200\times700$ uniform cells, which is
 built on the local Lax-Friedrich flux, e.g. defined in the $x$-direction by
  \begin{equation*}\hat{\vec{F}}^1_{i+\frac12,j}
   = \frac12\left(\vec{F}^1(\vec{W}_L)+\vec{F}^1(\vec{W}_R)
-\alpha(\vec{W}_R-\vec{W}_L)\right),\ \alpha=\text{max}\{{\varrho}(\vec{W}_L),
{\varrho}(\vec{W}_R)\},
\end{equation*}
where $\vec{W}_L:=\vec{W}_h(x_{i+\frac12}-0,y_j,t_n)$,
$\vec{W}_R:=\vec{W}_h(x_{i+\frac12}+0,y_j,t_n)$,
 ${\varrho}(\vec{W})=\max_{1\leq k\leq 4}(|\lambda_1^{(k)}|)$ is the spectral radius of
$\frac{\partial \vec{F}^1}{\partial \vec{W}}$,
referred to Section \ref{sec:GovernEqns},
the same spatial reconstruction as that in the sBGK scheme,
and
the second-order explicit TVD Runge-Kutta time discretization.
 The results show that the time evolution of a light relativistic jet with large
 internal energy is well simulated by those schemes, and the shock wave at the jet head is  well captured during the whole simulation.
Moreover, the  sBGK scheme resolves the waves better than the high-resolution LLF scheme on the mesh of $600\times350$ cells, and is comparable to that obtained by using the latter the fine mesh
 of $1200\times700$ cells.
\begin{figure}[h]
 \centering
 \subfigure[$t=2$]{
   \includegraphics[width=0.45\textwidth]{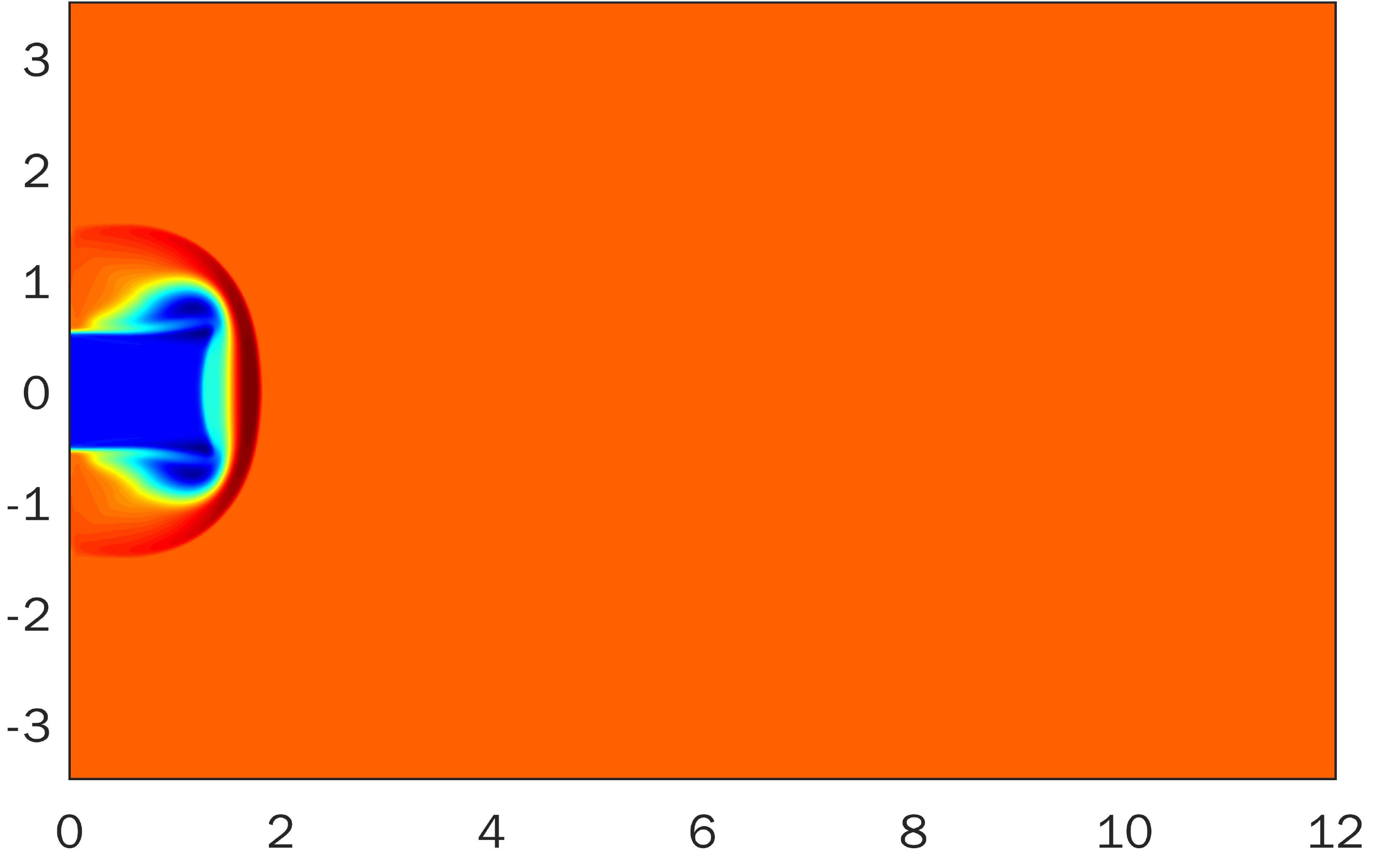}
 }
 \subfigure[$t=4$]{
   \includegraphics[width=0.45\textwidth]{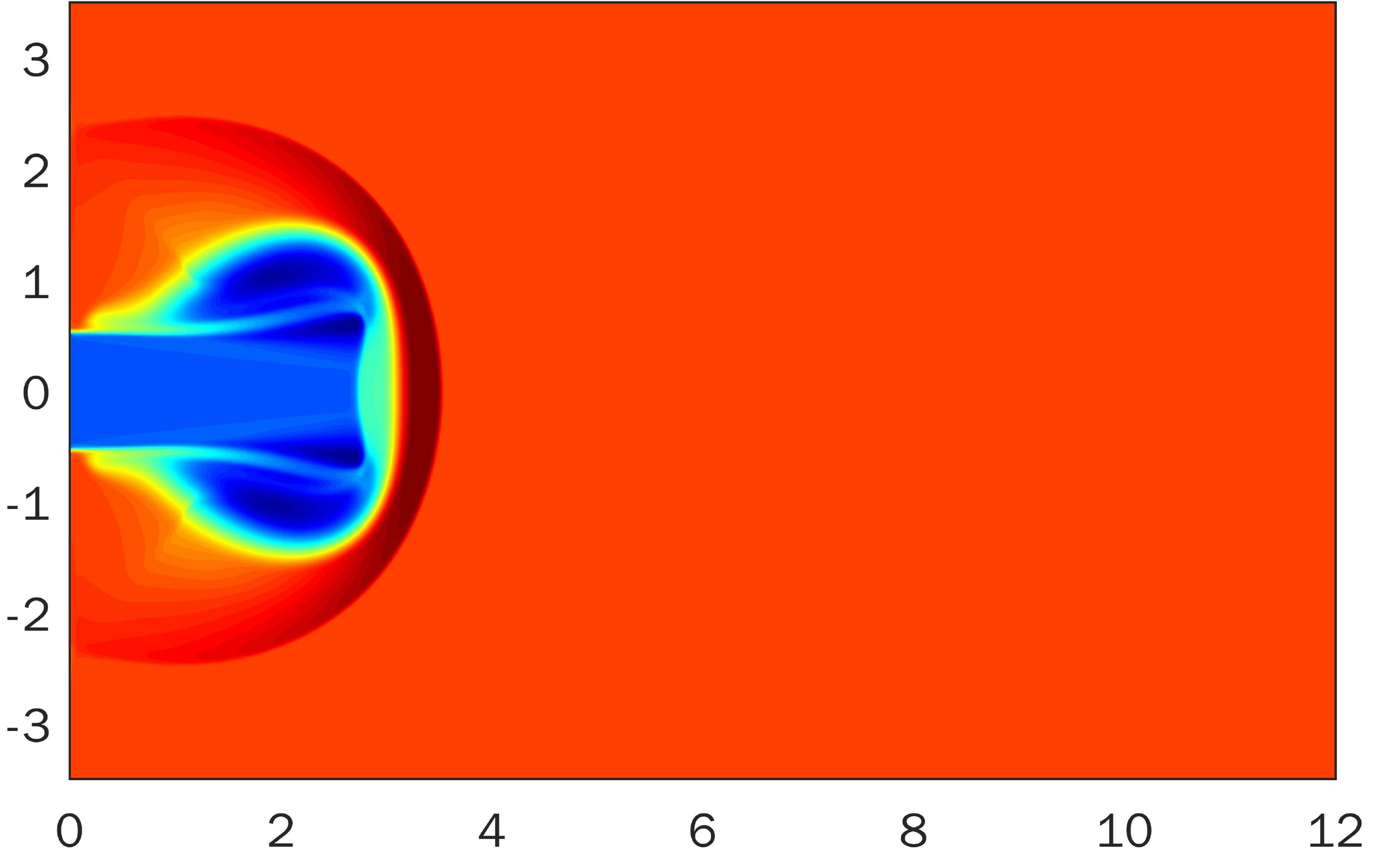}
 }
 \subfigure[$t=8$]{
   \includegraphics[width=0.45\textwidth]{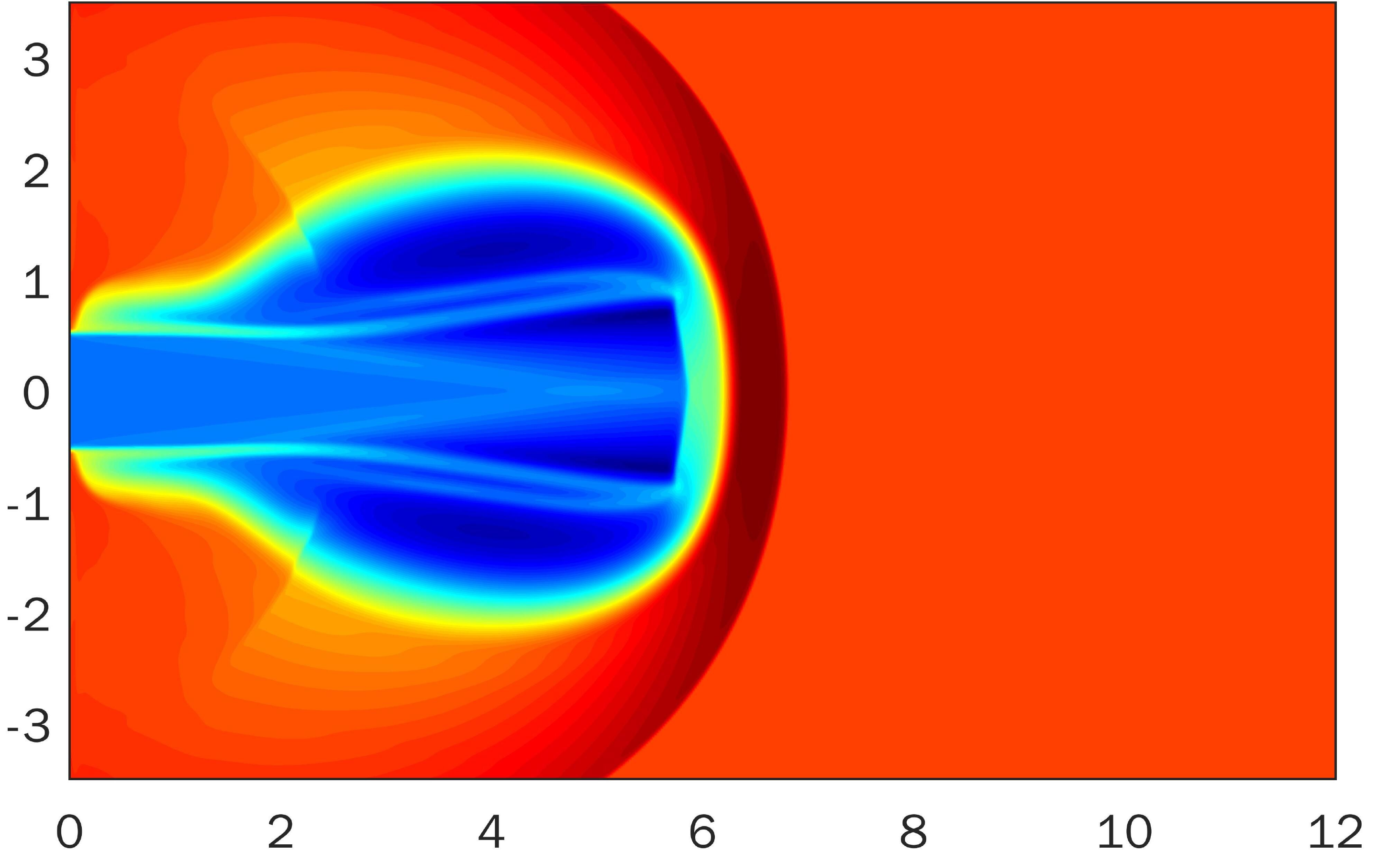}
 }
 \subfigure[$t=10$]{
   \includegraphics[width=0.45\textwidth]{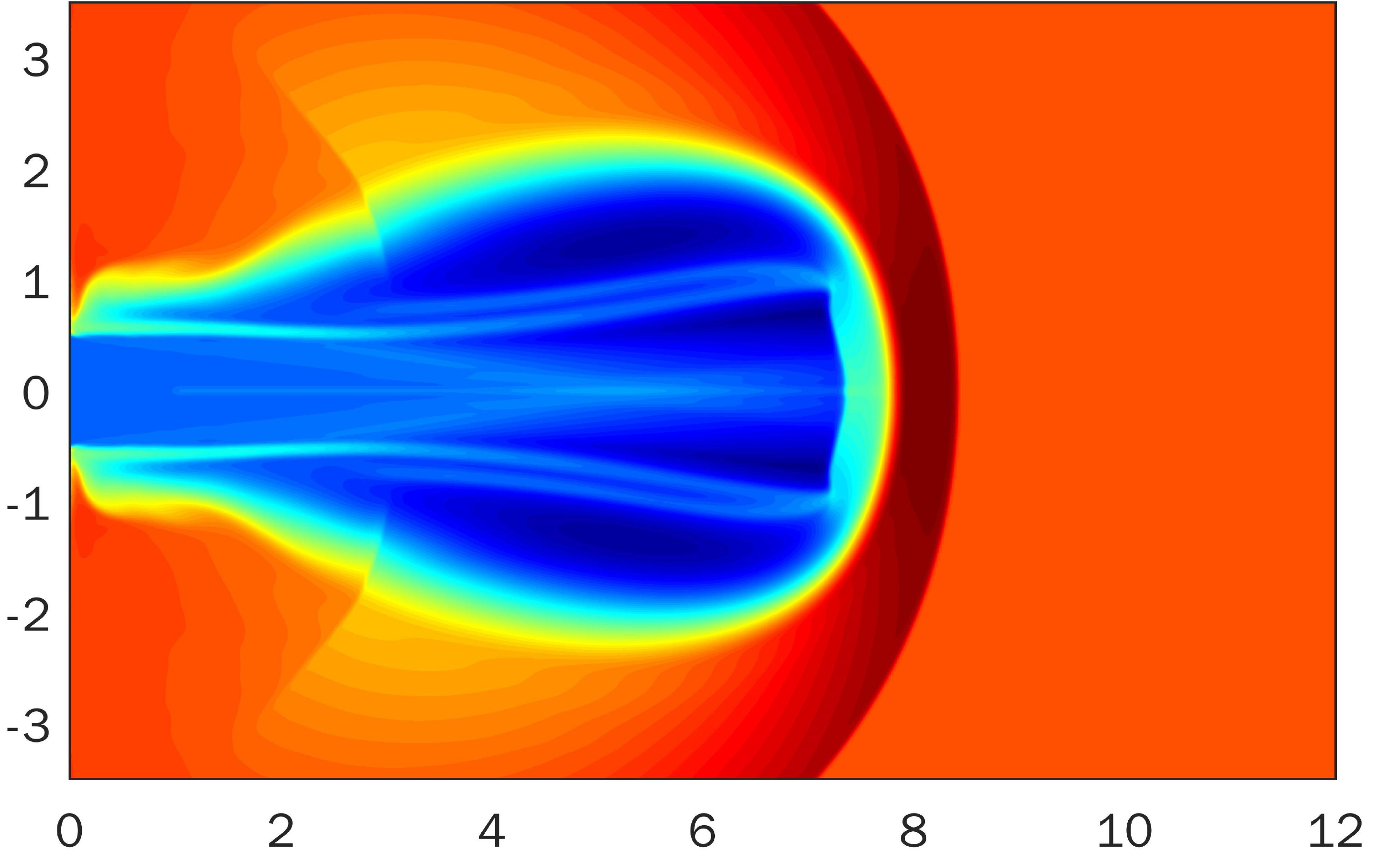}
 }
 \caption{Example \ref{ex:Jet}: Schlieren images of
$\ln \rho $  at several different times obtained by the sBGK
 scheme with $600\times350$ uniform cells in the domain $[0, 12]\times[-3.5, 3.5]$.}
 \label{fig:Jet}
\end{figure}

\begin{figure}[h]
 \centering
 \subfigure[$600\times350$ uniform cells]{
   \includegraphics[width=0.45\textwidth]{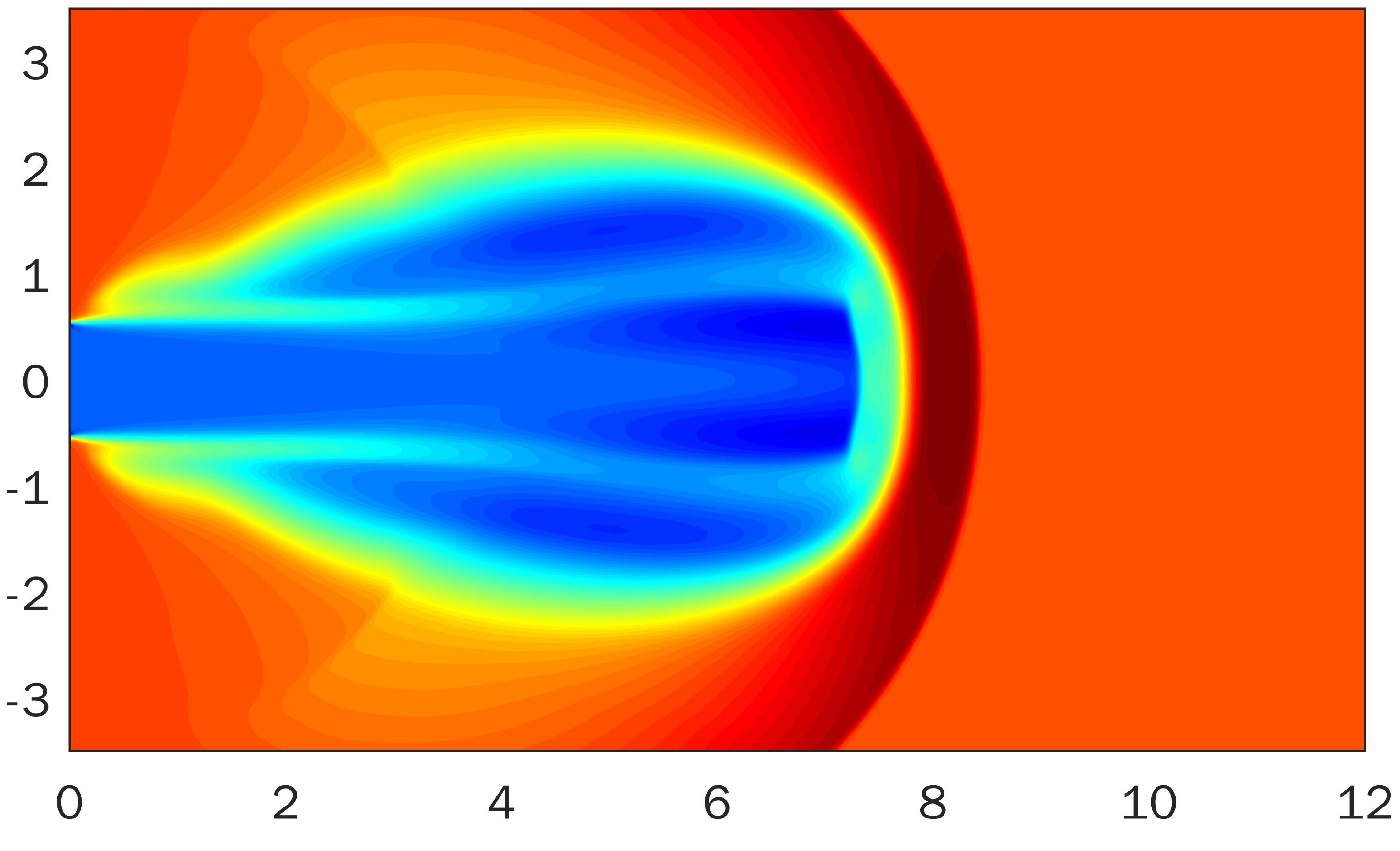}
 }
 \subfigure[$1200\times700$ uniform cells]{
   \includegraphics[width=0.45\textwidth]{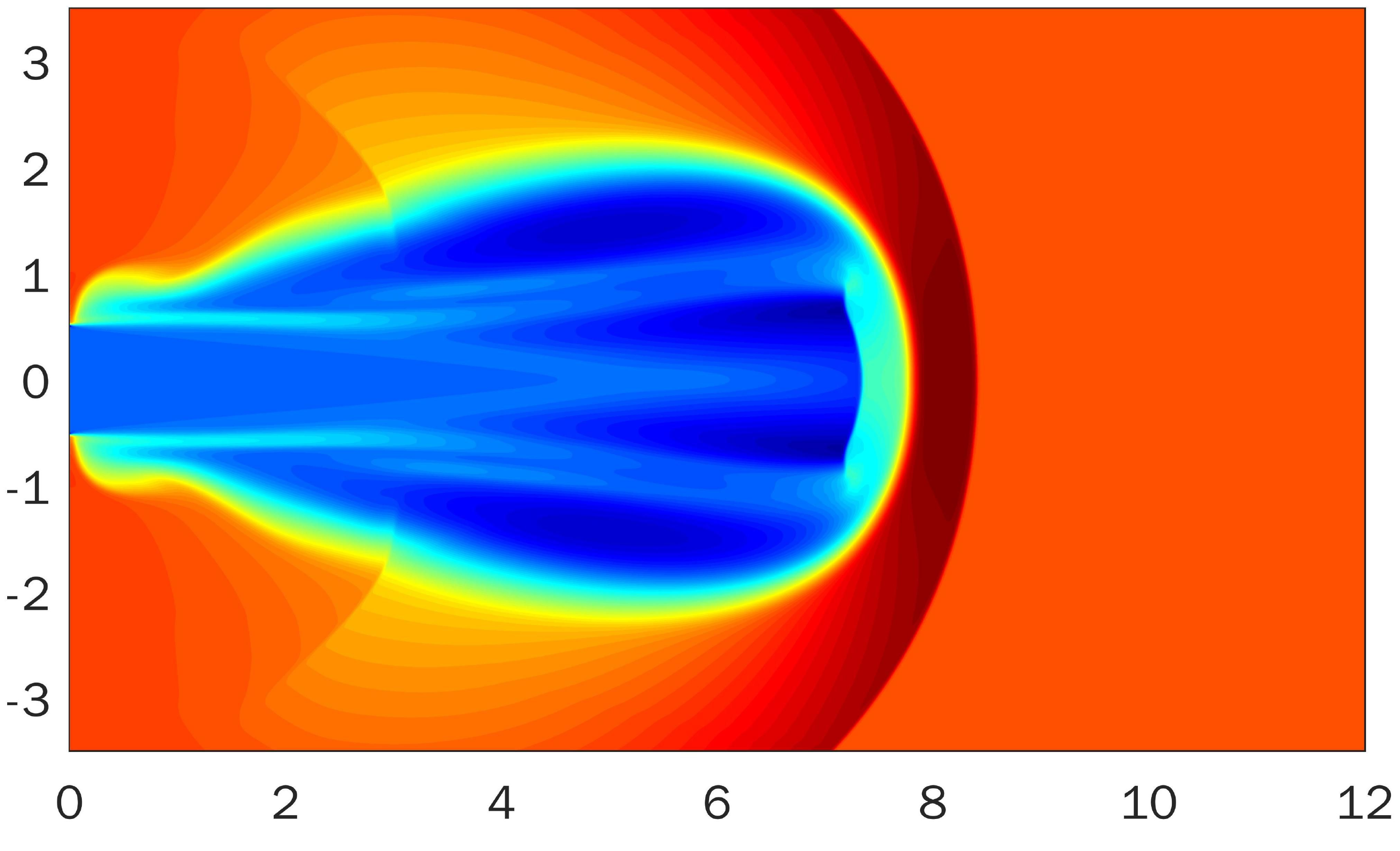}
 }
 \caption{Example \ref{ex:Jet}: Schlieren images of
$\ln \rho $  at $t=10$ obtained by using the high-resolution LLF scheme with the same spatial reconstruction as that in the sBGK scheme, the second-order TVD Runge-Kutta time discretization. 
}
 \label{fig:JetLLF}
\end{figure}

\section{Conclusions}
\label{sec:conclusion}
The correct equation of state (EOS) for the relativistic perfect gas has been recognized as being important.
For the relativistic perfect gases, Synge gave  the exact form of an EOS relating thermodynamic quantities of specific enthalpy and temperature, {which} is completely described in terms of modified Bessel functions \cite{synge1957}, also see \eqref{EQ:EOS}. However, such EOS does not seem to be welcome from the computational point of view since it involved the computation of Bessel functions. 
%
%
This paper extended the second-order accurate   BGK finite volume schemes
for  the ultra-relativistic flow simulations \cite{CHEN2017} to
 the 1D and 2D special relativistic hydrodynamics
with the Synge EOS. Unfortunately, such BGK  schemes
were very time-consuming thanks to calculating numerically
the triple moment integrals of the non-equilibrium part in the approximate distribution $\hat{f}$  for the macroscopic numerical flux at each time step
so that they were no longer  practical even though the the moment integrals in one dimension could be reduced to the double integrals.
 In view of this,  the
simplified BGK (sBGK) schemes were proposed 
 by removing some terms in the approximate nonequilibrium distribution at the
 cell interface for the  BGK schemes without loss of accuracy.
 They became practical because the triple moment integrals in them could be reduced to the single integrals by  using some  coordinate transformations.
Moreover, we also 
proved that the sound velocity was bounded by the speed of light and the relations between the left and right states of the shock wave, rarefaction wave, and contact discontinuity, so that the exact solution of the 1D Riemann problem could be derived.
Several 1D and 2D numerical experiments were conducted to demonstrate
the performance, accuracy and efficiency of the proposed schemes.
Besides the comparison of the sBGK scheme with
the high-resolution LLF scheme,
 the detailed comparisons of the sBGK scheme with the BGK scheme in one dimension showed that the former performed almost the same as the latter in terms of the accuracy and resolution, but was much more efficiency.


\begin{appendices}
 \section{The matrices $M_0$ and $M_1$ for 1D Euler equations}\label{M0M11D}
 If applying the transformation $\Lambda^{\alpha}_{\beta}$ with $u_2=0$ to the local rest values,
 then the matrices $M_0$ and $M_1$ for the 1D Euler equations can be explicitly given as follows
 \begin{align*}
  M_0 & =\int_{\mathbb{R}^{3}}p^{0}g\vec{\Psi}\vec{\Psi}^T d\varXi
  =\begin{pmatrix}
   \rho U^0               & G\rho U^0U^1                                      & \rho G(U^0)^2-\frac{\rho}{\zeta} \\
   G\rho U^0U^1                     & \frac{\rho((5G+\zeta) u_1^2 + G)}{\zeta /(U^0)^3}
                                    & \frac{\rho u_1(Gu_1^2+5G + \zeta)}{\zeta /(U^0)^3}                                    \\
   \rho G(U^0)^2-\frac{\rho}{\zeta} & \frac{\rho u_1(Gu_1^2+5G + \zeta)}{\zeta /(U^0)^3}
                                    & \frac{\rho (3Gu_1^2+3G + \zeta)}{\zeta/(U^0)^3}
  \end{pmatrix},
 \end{align*}
 and
 \begin{align*}
  M_1 & =\int_{\mathbb{R}^{3}}p^{1}g\vec{\Psi}\vec{\Psi}^T d\varXi
  =
  \begin{pmatrix}
   \rho U^1                 & \rho G(U^1)^2+\frac{\rho}{\zeta}                     & G\rho U^1 U^0                                    \\
   \rho G(U^1)^2+\frac{\rho}{\zeta} & \frac{\rho u_1 ((3G + \zeta)u_1^2+3G)}{\zeta/(U^0)^3}
                                    & \frac{\rho ((5G+\zeta)u^2_1  +G)}{\zeta/(U^0)^3}                                                         \\
   G\rho U^1 U^0                    & \frac{\rho ((5G+\zeta)u^2_1  +G)}{\zeta/(U^0)^3}      & \frac{\rho u_1(Gu_1^2+5G + \zeta)}{\zeta/(U^0)^3}
  \end{pmatrix}.
 \end{align*}

 \section{The matrices $M_0$, $M_1$ and $M_2$ for 2D Euler equations}\label{M2D}
 In the 2D case, the matrices $ M_{k}=\int_{\mathbb{R}^{3}}p^{k}g \vec{\Psi}\vec{\Psi}^Td\varXi$, $k=0,1,2$, have the explicit expressions
 \begin{align*}
  M_0 & =\int_{\mathbb{R}^{3}}p^{0}g\vec{\Psi}\vec{\Psi}^Td\varXi\notag \\
      & =
  \begin{small}
   \begin{pmatrix}
    \rho U^0                          & G\rho U^0 U^1
                                      & G\rho U^0 U^2                                          & \rho G (U^0)^2-\frac{\rho}{\zeta}                      \\
    G\rho U^0 U^1                     & \frac{\rho ((5G+\zeta)u_1^2-Gu_2^2+G)}{\zeta/(U^0)^3}
                                      & \frac{\rho u_1 u_2(6G+\zeta)}{\zeta/(U^0)^3}           & \frac{\rho u_1(Gu_1^2+Gu_2^2+5G+\zeta)}{\zeta/(U^0)^3} \\
    G\rho U^0 U^2                     & \frac{\rho u_1 u_2(6G+\zeta)}{\zeta/(U^0)^3}
                                      & \frac{\rho(-Gu_1^2+(5G+\zeta)u_2^2+G)}{\zeta/(U^0)^3}  & \frac{\rho u_2(Gu_1^2+Gu_2^2+5G+\zeta)}{\zeta/(U^0)^3} \\
    \rho G (U^0)^2-\frac{\rho}{\zeta} & \frac{\rho u_1(Gu_1^2+Gu_2^2+5G+\zeta)}{\zeta/(U^0)^3}
                                      & \frac{\rho u_2(Gu_1^2+Gu_2^2+5G+\zeta)}{\zeta/(U^0)^3} & \frac{\rho(3Gu_1^2+3Gu_2^2+3G+\zeta)}{\zeta/(U^0)^3}
   \end{pmatrix},
  \end{small}
 \end{align*}
 \begin{align*}
  M_1 & =\int_{\mathbb{R}^{3}}p^{1}g\vec{\Psi}\vec{\Psi}^Td\varXi\notag \\
      & =
  \begin{small}
   \begin{pmatrix}
    \rho U^1                         & \rho G(U^1)^2+\frac{\rho}{\zeta}
                                     & G\rho U^1U^2                                                 & G\rho U^0U^1                                           \\
    \rho G(U^1)^2+\frac{\rho}{\zeta} & \frac{\rho u_1((3G+\zeta)u_1^2 -3Gu_2^2 +3G)}{\zeta/(U^0)^3}
                                     & \frac{\rho u_2((5G+\zeta)u_1^2-Gu_2^2+G)}{\zeta/(U^0)^3}     & \frac{\rho((5G+\zeta)u_1^2-Gu_2^2+G)}{\zeta/(U^0)^3}   \\
    G\rho U^1U^2                     & \frac{\rho u_2((5G+\zeta)u_1^2-Gu_2^2+G)}{\zeta/(U^0)^3}
                                     & \frac{\rho u_1(-Gu_1^2+(5G+\zeta)u_2^2+G)}{\zeta/(U^0)^3}    & \frac{\rho u_1u_2(6G+\zeta)}{\zeta/(U^0)^3}            \\
    G\rho U^0U^1                     & \frac{\rho((5G+\zeta)u_1^2-Gu_2^2+G)}{\zeta/(U^0)^3}
                                     & \frac{\rho u_1u_2(6G+\zeta)}{\zeta/(U^0)^3}                  & \frac{\rho u_1(Gu_1^2+Gu_2^2+5G+\zeta)}{\zeta/(U^0)^3}
   \end{pmatrix},
  \end{small}
 \end{align*}
 and
 \begin{align*}
  M_2 & =\int_{\mathbb{R}^{3}}p^{2}g\vec{\Psi}\vec{\Psi}^Td\varXi\notag \\
      & =
  \begin{small}
   \begin{pmatrix}
    \rho U^2                         & G\rho U^1U^2
                                     & \rho G(U^2)^2+\frac{\rho}{\zeta}                             & G\rho U^0U^2                                                 \\
    G\rho U^1U^2                     & \frac{\rho u_2((5G+\zeta)u_1^2-Gu_2^2+G)}{\zeta/(U^0)^3}
                                     & \frac{\rho u_1(-Gu_1^2+(5G+\zeta)u_2^2+G)}{\zeta/(U^0)^3}    & \frac{\rho u_1u_2(6G+\zeta)}{\zeta/(U^0)^3}                  \\
    \rho G(U^2)^2+\frac{\rho}{\zeta} & \frac{\rho u_1(-Gu_1^2+(5G+\zeta)u_2^2+G)}{\zeta/(U^0)^3}
                                     & \frac{\rho u_2(-3Gu_1^2+(3G+\zeta)u_2^2 +3G)}{\zeta/(U^0)^3} & \frac{\rho(-Gu_1^2+(5G+\zeta)u_2^2+G)}{\zeta/(U^0)^3}        \\
    G\rho U^0U^2                     & \frac{\rho u_1u_2(6G+\zeta)}{\zeta/(U^0)^3}
                                     & \frac{\rho(-Gu_1^2+(5G+\zeta)u_2^2+G)}{\zeta/(U^0)^3}        & \frac{\rho u_2(Gu_1^2 + Gu_2^2 + 5G + \zeta)}{\zeta/(U^0)^3}
   \end{pmatrix}.
  \end{small}
 \end{align*}
 \section{1D Riemann problem}\label{sub:wavestructure}
For the  Riemann problem of the 1D special RHD equations,
three eigenvalues of the Jacobian matrix are $\lambda_-=\frac{u-c_s}{1-uc_s},\lambda_0=u,\lambda_+=\frac{u+c_s}{1+uc_s}$,
where $u=u_1$. The Riemann invariants, Rankine-Hugoniot conditions and the relations between the left and right states of the
elementary waves for the 1D RHD equations with the Synge EOS are given below.

 \subsection{Riemann invariants}
 The Riemann invariants associated with the characteristic field $\lambda_0$ are the pressure $p$ and velocity $u$ \cite{lanza1985},
  while the Riemann invariants associated with the characteristic field $\lambda_{\pm}$ are the entropy $S$ and $\psi_{\pm}$, which
 play a pivotal role in resolving the centered rarefaction waves. The concrete
 expressions are given as follows \cite{taub1948,lanza1985,synge1957}
   \begin{align}
     &S = -{\rm log}(\rho L(\zeta))+\mbox{const.}, \ \ L(\zeta)=\frac{\zeta}{K_2(\zeta)}\exp\left(-\frac{\zeta K_3(\zeta)}{K_2(\zeta)}\right),\\
     &\psi_{\pm} = \frac12{\rm ln}\left(\frac{1+u}{1-u}\right)\mp\int^{\rho}\frac{c_s(w,S)}{w}{\rm d} w \label{eq:Inviarant}.
   \end{align}
 The equation $\rho = p\zeta$ gives
 \begin{equation*}
   {\rm d}\rho = \frac{\partial \rho}{\partial p} {\rm d}p + \frac{\partial \rho}{\partial \zeta} {\rm d}\zeta
               = \zeta {\rm d}p + p {\rm d}\zeta.
 \end{equation*}
Using ${\rm d} S=0$ and the thermodynamic relation
 \begin{equation*}
   {\rm d}h = T{\rm d}S + \frac{1}{\rho} {\rm d}p,
 \end{equation*}
gives
 \begin{equation*}
   {\rm d} p = \rho {\rm d}h = \rho G'(\zeta){\rm d}\zeta.
 \end{equation*}
Hence, one has
 \begin{equation}\label{eq:drho}
   \frac{{\rm d\rho}}{\rho} = \zeta\left(G'+\frac{1}{\zeta^2}\right){\rm d}\zeta.
 \end{equation}
 Since $f(\zeta)=G(\zeta)-\frac{1}{\zeta}$ monotonically decreases
with respect to $\zeta$ \cite{synge1957}, we have $G'+\frac{1}{\zeta^2}<0$,
and the Riemann variants $\psi_{\pm}$ in \eqref{eq:Inviarant} can be rewritten as
 \begin{equation*}
   \psi_{\pm} = \frac12{\rm ln}\left(\frac{1+u}{1-u}\right)\pm\int^{\zeta}
   \left(\frac{\tilde\zeta(G'+1/\tilde\zeta^2)G'}{G}\right)^{1/2}{\rm d} \tilde\zeta.
 \end{equation*}

 \subsection{Rankine-Hugoniot conditions}
This section gives the jump conditions across the discontinuities for one-dimensional RHD equations of the perfect relativistic gas. Let the shock related to the characteristic field  $\lambda_{\pm}$ travel at speed $s$, $\vec{W}_a$ and $\vec{W}_b$ be the conservative variables in the wavefront and post-wave, respectively. Then the junction conditions across the shock satisfy
 \begin{equation*}
   s[\vec{W}]=[\vec{F}^1],
 \end{equation*}
where $[f]=f_b-f_a$ represents the discontinuity in the function involved.
If we choose our coordinate system such that the discontinuity is at rest, then the above equations become
 \begin{equation}\label{eq:RHconditions}
   \begin{aligned}
      &[\rho U^1] = \left[\frac{\rho u}{\sqrt{1-u^2}}\right] = 0,\\
      &[\rho hU^1U^1 + p] = \left[\frac{\rho h u^2}{1-u^2}+p\right] = 0,\\
      &[\rho hU^0U^1] = \left[\frac{\rho h u}{1-u^2}\right] = 0.
   \end{aligned}
 \end{equation}
It follows that the relativistic Rankine-Hugoniot equations are   \cite{synge1957}
 \begin{align}\nonumber
     &\frac{u_a-u_b}{1-u_au_b}=\mp\sqrt{\frac{(p_b-p_a)
     (\varepsilon_b-\varepsilon_a)}{(\varepsilon_a+p_b)(\varepsilon_b+p_a)}},\\
     &G_a^2-G_b^2=\left(\frac{G_b}{\rho_b}+\frac{G_a}
     {\rho_a}\right)(p_a-p_b)\label{eq:junction}.
 \end{align}
 Using the above results for the rarefaction wave and shock wave, we obtain the following theorem.
 \begin{theorem}
 For the wave associated with the characteristic field $\lambda_-$, one has
 \begin{align*}
   & p_l < p_r, ~ u_l > u_r,\ \text{for the shock wave},\\
   &p_l > p_r, ~ u_l < u_r,\  \text{for the rarefaction wave}.
 \end{align*}
 For the wave associated with the characteristic field $\lambda_+$, it holds
 \begin{align*}
   &p_l > p_r, ~ u_l > u_r,\ \text{for the shock wave},\\
   &p_l < p_r, ~ u_l < u_r,\ \text{for the rarefaction wave}.
 \end{align*}
 For the wave associated with the characteristic field $\lambda_0$,
 we have $p_l=p_r$ and $u_l=u_r$, where the subscripts $l$ and $r$ indicate the left and right states of the variables, respectively.
 \end{theorem}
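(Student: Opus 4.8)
The plan is to dispose of the three characteristic families one at a time, in increasing order of difficulty, and to reduce the $\lambda_+$ assertions to the $\lambda_-$ ones by symmetry. The contact discontinuity $\lambda_0$ is immediate: since $\lambda_0=u$ is linearly degenerate, its two Riemann invariants are exactly $p$ and $u$ (as recalled just above), so both stay constant across the wave, giving $p_l=p_r$ and $u_l=u_r$. For the $\lambda_\pm$ waves it then suffices, by the reflection invariance $x\mapsto-x$, $u\mapsto-u$ of the 1D equations --- under which a left-going $\lambda_-$ wave turns into a right-going $\lambda_+$ wave with the roles of the left and right states interchanged and the velocities sign-flipped --- to treat only the $\lambda_-$ family; the four $\lambda_+$ statements then follow mechanically from the four $\lambda_-$ ones.

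For a $\lambda_-$ rarefaction wave the flow is isentropic, $S=\mbox{const}$, and $\psi_-=\mbox{const}$. Differentiating the rewritten invariant $\psi_-=\tfrac12\ln\frac{1+u}{1-u}-\int^\zeta\big(\tilde\zeta(G'+\tilde\zeta^{-2})G'/G\big)^{1/2}d\tilde\zeta$ along the wave gives
\[
 \frac{du}{1-u^2}=\left(\frac{\zeta\,(G'+\zeta^{-2})\,G'}{G}\right)^{1/2}d\zeta ,
\]
and the factor on the right is strictly positive because $G'<0$ and $G'+\zeta^{-2}<0$ (the latter is Synge's monotonicity of $f(\zeta)=G(\zeta)-\zeta^{-1}$, used in Appendix \ref{sub:wavestructure}). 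Combined with $dp=\rho\,G'(\zeta)\,d\zeta$ and $\rho G'<0$, this shows that along the wave $u$ is a strictly increasing, and $p$ a strictly decreasing, function of $\zeta$; in particular $u$ and $p$ vary in opposite directions. To fix the direction I would invoke genuine nonlinearity of the $\lambda_-$ field: along a rarefaction fan of this family the characteristic speed increases from the left state to the right state, and a one-point evaluation of the eigenvector $r_-$ (equivalently, the fact that $c_s=c_s(\zeta)$ from \eqref{eq:soundspeed} is decreasing in $\zeta$, so that $\lambda_-=(u-c_s)/(1-uc_s)$, increasing in $u$ and decreasing in $c_s$, grows exactly when $u$ does) shows that $u$ increases along the integral curve in the same direction as $\lambda_-$. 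Hence $u_l<u_r$ and $p_l>p_r$, and the reflected statements give the $\lambda_+$ rarefaction relations $u_l<u_r$, $p_l<p_r$.

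For a $\lambda_-$ shock the core of the argument is to show that it is compressive. Using the junction conditions \eqref{eq:RHconditions}, the relativistic Rankine--Hugoniot relations \eqref{eq:junction}, and the entropy inequality, I would establish the relativistic analogue of Weyl's theorem: along the shock adiabat through a fixed wavefront state $\vec{W}_a$ the entropy varies monotonically, and on the admissible (entropy-increasing) branch one has $p_b>p_a$; the second identity in \eqref{eq:junction} then forces $G_a<G_b$, i.e.\ $\zeta_a>\zeta_b$, and, as part of the same monotonicity, $\rho_b>\rho_a$ and $\varepsilon_b>\varepsilon_a$ (consistent with $e=G-\zeta^{-1}-1$ being decreasing in $\zeta$). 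By constancy of the mass flux $j=\rho_a U^1_a=\rho_b U^1_b$ and a momentum/energy-flux bookkeeping, the fluid decelerates across the shock in the shock rest frame; consequently the first identity in \eqref{eq:junction}, $(u_a-u_b)/(1-u_au_b)=\mp\sqrt{(p_b-p_a)(\varepsilon_b-\varepsilon_a)/((\varepsilon_a+p_b)(\varepsilon_b+p_a))}$, has a well-defined real right-hand side whose sign is pinned down by that deceleration, namely $u_a>u_b$. Finally the Lax condition for the slowest family, together with the ordering $\lambda_-<\lambda_0=u<\lambda_+$, identifies the wavefront of a $\lambda_-$ shock with the \emph{left} state, so $p_l=p_a<p_b=p_r$ and $u_l=u_a>u_b=u_r$; the reflected statements give the $\lambda_+$ shock relations $p_l>p_r$, $u_l>u_r$.

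The step I expect to be the main obstacle is precisely this relativistic Weyl-type monotonicity of the shock adiabat: deducing from \eqref{eq:junction} alone --- using only the Synge-EOS facts that $G$ and $f(\zeta)=G-\zeta^{-1}$ are decreasing and that $dp=\rho G'\,d\zeta$ --- that the entropy is monotone along the Hugoniot curve and that its entropy-increasing branch is the one on which $p$, $\rho$ and $\varepsilon$ all increase. Once that monotonicity (and with it the sign of the mass flux and of the relativistic velocity difference) is in hand, every pressure and velocity inequality for the three families reduces to the algebraic identities above together with the elementary sign bookkeeping dictated by the Lax conditions and the reflection symmetry.
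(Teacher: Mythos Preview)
Your proposal is correct, and the contact case and the reduction of $\lambda_+$ to $\lambda_-$ by reflection match the paper. The differences lie in execution. For the $\lambda_-$ rarefaction you argue directly from $d\psi_-=0$ and the monotonicity of $\lambda_-$ in $(u,c_s)$; the paper instead argues by contradiction, assuming $\zeta_r\le\zeta_l$, using constancy of $\psi_-$ to deduce $u_l\ge u_r$, and combining this with the Lax inequality rewritten as $(u_l-u_r)/(1-u_lu_r)<(c_s^l-c_s^r)/(1-c_s^lc_s^r)$ to contradict the monotone decrease of $c_s(\zeta)$ --- the same ingredients, packaged differently. For the $\lambda_-$ shock the paper sidesteps precisely what you flag as the main obstacle: it does not prove any Weyl-type monotonicity of the Hugoniot curve but simply \emph{cites} Synge's monograph for the two facts $S_r>S_l$ and $\zeta_l>\zeta_r$. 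From $\zeta_l>\zeta_r$ it reads off $G_l<G_r$ (since $G$ decreases), gets $p_l<p_r$ from the second identity in \eqref{eq:junction}, and then extracts $u_l>u_r$ from the conditions \eqref{eq:RHconditions} in the shock rest frame (the first and third together give $hU^0$ constant, hence $U^0_l>U^0_r$). Your plan is more self-contained and makes explicit where the real work sits; the paper's is shorter because it delegates that thermodynamic step to the literature.
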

 \begin{proof}
   (\romannumeral1) Since the Riemann invariants associated with the characteristic field $\lambda_0$ are the pressure $p$ and velocity $u$, it's easy to obtain that
   \begin{equation*}
     p_l=p_r,~ u_l=u_r.
   \end{equation*}
   (\romannumeral2) Suppose that the wave related to the characteristic field $\lambda_-$ is a rarefaction wave, the Lax entropy condition gives
   \begin{equation*}
     \frac{u_l-c^l_s}{1-u_lc^l_s}<\frac{u_r-c^r_s}{1-u_rc^r_s},
   \end{equation*}
   thus
   \begin{equation}\label{eq:u_lu_r}
     \frac{u_l-u_r}{1-u_lu_r}<\frac{c^l_s-c^r_s}{1-c^l_sc^r_s}.
   \end{equation}
If assuming $\zeta_r\leq \zeta_l$, then using the Riemann invariants $\psi_-$ gives
   \begin{equation*}
     \frac12\ln\left(\frac{1+u_l}{1-u_l}\right)-\frac12\ln\left(\frac{1+u_r}{1-u_r}\right)
     = \int_{\zeta_r}^{\zeta_l}\left(\frac{\zeta(G'+1/\zeta^2)G'}{G}\right)^{1/2}{\rm d} \zeta\geq 0,
   \end{equation*}
and then it's easy to obtain that
   \begin{equation*}
     u_l \geq u_r.
   \end{equation*}
Combining it with  \eqref{eq:u_lu_r}  deduces that $c_s^l-c_s^r>0$, which contradicts the fact that  $c_s$ is a monotonically decreasing function of $\zeta$.
   Therefore, for the rarefaction wave associated with $\lambda_-$, we have
   $$\zeta_r>\zeta_l,~u_l<u_r.$$
 From \eqref{eq:drho} and  $f(\zeta)=G(\zeta)-\frac{1}{\zeta}$ monotonically decreasing
with respect to $\zeta$, it holds that ${\rm d}\rho / {\rm d}\zeta<0$. Hence it's easy to get
   \begin{equation*}
     \rho_r<\rho_l.
   \end{equation*}
   By using $p=\rho/\zeta$, we obtain that
   $$p_r<p_l.$$
   (\romannumeral3) Suppose that the wave related to the characteristic field $\lambda_-$ is a shock wave, then one has \cite{synge1957}
   \begin{equation*}
      S_r > S_l,~ \zeta_l > \zeta_r.
   \end{equation*}
   Since $G(\zeta)$ is a monotonically decreasing function of $\zeta$, then $G_l<G_r$. Thus from \eqref{eq:junction} it's easy to get
   \begin{equation*}
     p_l < p_r.
   \end{equation*}
   Moreover, according to the third equation of \eqref{eq:RHconditions}, one has
   \begin{equation*}
     u_l > u_r.
   \end{equation*}

   (\romannumeral4) For the wave related to the characteristic field $\lambda_+$, the conclusion can be  similarly obtained.
 \end{proof}\qed

\end{appendices}

\section*{Acknowledgements}
This work was partially supported by
the Science Challenge Project, No. JCKY2016212A502
and
the National Natural Science
Foundation of China (Nos. 11901460,
11421101).

\bibliography{SRHD_BGKv3}
\bibliographystyle{plain}
\end{document}